\newtheorem{prop}{Proposition}[section]
\theoremstyle{definition}
\newtheorem{assum}{Assumption}[section]
\theoremstyle{remark}
\newtheorem{rem}{Remark}[section]
\numberwithin{equation}{section}
\theoremstyle{plain}
{\theoremstyle{plain}\newtheorem{theorem}{Theorem}[section]}
{\theoremstyle{plain}\newtheorem{proposition}{Proposition}[section]}
{\theoremstyle{plain}\newtheorem{lemma}{Lemma}[section]}
{\theoremstyle{plain}}
\numberwithin{equation}{section}
\begin{document}
	
\title
{Caputo fractional stochastic differential equations: Lipschitz continuity in the fractional order}
\author{Ta Cong Son\thanks{Department of Mathematics, VNU University of Science, Vietnam National University, Hanoi, 334 Nguyen
Trai, Thanh Xuan, Hanoi, 084 Vietnam.}\and Nguyen Tien Dung$^\ast$\thanks{Corresponding author. Email: dung@hus.edu.vn}\and Pham Thi Phuong Thuy\thanks{The faculty of Basic Sciences, Vietnam Air Defence and Air Force Academy, Son Tay, Ha Noi, 084 Vietnam.}\and Tran Manh Cuong$^\ast$ \and Hoang Thi Phuong Thao$^\ast$\and Pham Dinh Tung$^\ast$}
	
\date{\today}          
	
	\maketitle
	\begin{abstract}
In this paper, we consider a  class of the Caputo fractional stochastic differential equations of fractional order $\alpha \in (\frac{1}{2},1]$. Our aim is to analyze of the continuous dependence of solutions on the fractional order $\alpha.$ We first provide explicit estimates for the rate of weak convergence the solutions. We then describe the exact asymptotic behavior of this convergence to show that the rate is optimal.
	\end{abstract}	
\noindent\emph{Keywords:} Caputo fractional derivative, Stochastic differential equation, Malliavin calculus.\\
{\em 2020 Mathematics Subject Classification:}  26A33,	60J70, 60H07.

	\section{Introduction}
In recent decades, the fractional calculus and fractional stochastic differential equations have attracted great attention of researchers due to their coverage for a great variety of applications in the real world such as dynamics of complex systems in engineering, fluid mechanics as well as financial and biological models, and so on. By using different definitions of fractional operators (such as Caputo
derivative, Caputo-Fabrizio derivative, Riemann-Liouville derivative and integral, Atangana-Baleanu derivative, etc), many types of fractional stochastic differential equations have been introduced. We refer the reader to monographs \cite{BK,Georgiev2018,KS} for more details. In this paper, we consider Caputo fractional stochastic differential equations of the form
\begin{align}\label{ca2}
D_{0^{+}}^\alpha X_{\alpha,t} = b(t,X_{\alpha,t}) + \sigma(t,X_{\alpha,t})\frac{dB_t}{dt},\,\,0\leq t\leq T,
\end{align}
where the fractional order $\alpha \in (\frac{1}{2},1],$ $B= (B_t)_{t\in [0,T]}$ is a standard Brownian motion and $b,\sigma:[0,T]\times \mathbb{R}\to \mathbb{R}$ are measurable functions. Here we write $X_{\alpha,t}$ to stress the dependence of solutions on $\alpha.$ Note that Caputo fractional operator $D_{0^{+}}^\alpha$ is non-local, which strongly makes the equation (\ref{ca2}) suitable and
efficient to describe the long memory or non-local effects characterizing most physical phenomena, see \cite{KD,Oldham2016,Pod} and the references therein.

By its definition, a stochastic process $X_\alpha=(X_{\alpha,t})_{t\in [0,T]}$ is called a solution of the equation ($\ref{ca2}$) with the initial condition $X_{\alpha,0} = x_0\in \mathbb{R}$ if it satisfies the following equation
	\begin{equation}\label{eq1}
	X_{\alpha,t} = x_{0} + \frac{1}{\Gamma(\alpha)} \left(\int_0^t (t-s)^{\alpha -1}b(s,X_{\alpha,s})ds +  \int_0^t (t-s)^{\alpha -1} \sigma(s,X_{\alpha,s})dB_s\right),\,0\leq t\leq T,
	\end{equation}
	where the second integral is interpreted as an It\^o stochastic integral and $\Gamma(\alpha) = \int_0^\infty x^{\alpha -1}e^{-x}dx$ is the Gamma function.

In the last years, several fundamental properties of the solutions to the equation (\ref{eq1}) have been studied by various authors, see e.g. \cite{Guo2021,SonDT,SonDT2020,Xu2019,Wang1,Wang2022}. We note that the equation ($\ref{ca2}$) belongs to the class of stochastic Volterra equations with singular kernels. For this class, the convergence of solutions with respect to the kernels plays an important role in many applications. We refer the reader to \cite{Abi2019a,Abi2019b,Abi2021,Alfonsi2024} and references therein for several fruitful results. In this research theme, the results obtained for the equation ($\ref{ca2}$) can be summarized as follows: Given $\beta\in (\frac{1}{2},1],$ let $(X_{\beta,t})_{t\in [0,T]}$ solve the following equation
\begin{equation}\label{eq1qc}
	X_{\beta,t} = x_{0} + \frac{1}{\Gamma(\beta)} \left(\int_0^t (t-s)^{\beta -1}b(s,X_{\beta,s})ds +  \int_0^t (t-s)^{\beta -1} \sigma(s,X_{\beta,s})dB_s\right),\,0\leq t\leq T.
	\end{equation}
In \cite{Huong2023,Wang}, authors proved that
\begin{equation}\label{yfl}
E|X_{\alpha,t}-X_{\beta,t}|^p\to 0  \ \ \mbox{ as}  \ \alpha\to \beta
\end{equation}
for every $0\leq t\leq T$ and $p\geq 2.$ Furthermore, by using Theorem 3.1 in recent paper \cite{Alfonsi2024}, one can get the following estimate for the rate of convergence
\begin{equation}\label{yfla1}
E|	X_{\alpha,t} -X_{\beta,t}|^2 \le Ct^{2(\alpha\wedge\beta)-1}(|\ln t|^2+1)|\alpha-\beta|^2\,\,\,\forall\,\alpha,\beta\in (\frac{1}{2},1],
\end{equation}
where $C$ is a positive constant.

 Let us recall that, in the literature, $L^p$-results of the forms (\ref{yfl}) and (\ref{yfla1}) are the so-called the strong convergence of $X_{\alpha,t}$ to $X_{\beta,t}$ as $\alpha\to \beta.$ On the other hand, from practical point of view, weak convergence results are very useful and have been widely investigated, see e.g. \cite{Kloeden1992}. The main task is to obtain an explicit estimate for the quantity $|Eg(X_{\alpha,t}) -Eg(X_{\beta,t})|,$ where $g$ belongs to a suitable class of test functions. When $g$ is a Lipschitz continuous function, the problem is trivial. Indeed, it follows from  (\ref{yfla1}) that
$$|Eg(X_{\alpha,t}) -Eg(X_{\beta,t})|\leq CE|X_{\alpha,t} -X_{\beta,t}|\le C|\alpha-\beta|\,\,\,\forall\,\alpha,\beta\in (\frac{1}{2},1].$$
The non-trivial case is when the test function $g$ is only measurable and bounded. In the present paper, our aim is to handle this non-trivial case. Our tactics is to use a general result established in our recent paper \cite{Dung2022} by means of Malliavin calculus, see Lemma \ref{dltq} below. We obtain the following estimate
\begin{equation}\label{yflb}
|Eg(X_{\alpha,t}) -Eg(X_{\beta,t})| \le C\|g\|_{\infty}t^{\alpha\wedge\beta-\beta}(|\ln t|+1)|\alpha -\beta|\,\,\,\forall\,\alpha\in \big(\frac{1}{2},1],\beta\in [\frac{7}{8},1],
\end{equation}
where $\|.\|_\infty$ denotes the supremum norm. We also prove the optimality of the estimate (\ref{yflb}) as $\alpha\to \beta$ by showing that the limit $\lim\limits_{\alpha\to \beta}\frac{Eg(X_{\alpha,t}) -Eg(X_{\beta,t})}{\alpha-\beta}$  exists and can be computed explicitly.

The rest of this paper is organized as follows. In Section \ref{sec1}, we provide some useful estimates and  recall some concepts of Malliavin calculus. In Section \ref{kja}, we revisit the strong convergence results. We point out, in Theorem \ref{dl32m} below, that the limit $Y_{\beta,t}:=\lim\limits_{\alpha\to \beta}\frac{X_{\alpha,t} -X_{\beta,t}}{\alpha-\beta}$ exists and can be computed explicitly. Our main results are then stated and proved in Section \ref{kjb}. We provide optimal estimates for the rate of weak convergence in Theorem \ref{dlc}. 
	
	\section{Preliminaries}\label{sec1}

	\subsection{Some useful estimates}	
In this subsection, we collect some fundamental integral estimates and provide a Gronwall-type lemma for singular kernels.
	\begin{lemma}\label{l1} Let  $\Gamma(\alpha) = \int_0^\infty x^{\alpha -1}e^{-x}dx,\alpha>0$ be the Gamma function. Then

\noindent $(i)$ there exits a positive constant $C$ such that $|\Gamma'(\alpha)|+|\Gamma''(\alpha)|\leq C$ for all $\frac{1}{2}<\alpha\leq 1,$

\noindent $(ii)$  there exists $\alpha_\ast\in(1,2)$ such that $\Gamma(\alpha)$ is decreasing in $(0,\alpha_\ast)$ and  increasing in $(\alpha_\ast, \infty).$

	\end{lemma}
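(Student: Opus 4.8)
The plan is to treat the two parts separately, in both cases working directly from the integral representation of $\Gamma$ and of its derivatives.

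For part $(i)$, I would first justify differentiation under the integral sign. For $\alpha$ in a fixed compact subinterval of $(0,\infty)$ one has the pointwise bound $x^{\alpha-1}|\ln x|^{k}e^{-x}\le (x^{-1/2}\vee 1)(1+|\ln x|+\ln^{2}x)e^{-x}$ for $k=0,1,2$, and the dominating function on the right is integrable over $(0,\infty)$ (the factor $e^{-x}$ controls the tail, and $x^{-1/2}\ln^{2}x$ is integrable near $0$). Hence $\Gamma'(\alpha)=\int_{0}^{\infty}x^{\alpha-1}(\ln x)e^{-x}\,dx$ and $\Gamma''(\alpha)=\int_{0}^{\infty}x^{\alpha-1}(\ln x)^{2}e^{-x}\,dx$. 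Now restrict to $\alpha\in(\tfrac12,1]$ and split each integral at $x=1$: on $(0,1)$ use $x^{\alpha-1}\le x^{-1/2}$ (valid since $\alpha-1>-\tfrac12$), and on $[1,\infty)$ use $x^{\alpha-1}\le 1$ (valid since $\alpha\le 1$). This yields the uniform bound $|\Gamma'(\alpha)|+|\Gamma''(\alpha)|\le \int_{0}^{1}x^{-1/2}\bigl(|\ln x|+\ln^{2}x\bigr)e^{-x}\,dx+\int_{1}^{\infty}\bigl(|\ln x|+\ln^{2}x\bigr)e^{-x}\,dx=:C<\infty$, which is exactly the assertion.

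For part $(ii)$, the key observation is that $\Gamma''(\alpha)=\int_{0}^{\infty}x^{\alpha-1}(\ln x)^{2}e^{-x}\,dx>0$ for every $\alpha>0$, so $\Gamma$ is strictly convex on $(0,\infty)$; in particular $\Gamma'$ is strictly increasing and therefore has at most one zero. Since $\Gamma(1)=\Gamma(2)=1$, Rolle's theorem yields a point $\alpha_{\ast}\in(1,2)$ with $\Gamma'(\alpha_{\ast})=0$, and by strict monotonicity of $\Gamma'$ this zero is unique and satisfies $\Gamma'(\alpha)<0$ on $(0,\alpha_{\ast})$ and $\Gamma'(\alpha)>0$ on $(\alpha_{\ast},\infty)$. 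Consequently $\Gamma$ is decreasing on $(0,\alpha_{\ast})$ and increasing on $(\alpha_{\ast},\infty)$, as claimed.

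There is no genuine obstacle in this lemma; the only point requiring care is the interchange of differentiation and integration, which is dispatched by exhibiting the single dominating function $(x^{-1/2}\vee 1)(1+|\ln x|+\ln^{2}x)e^{-x}$ valid uniformly over the relevant range of $\alpha$, together with the elementary identities $\Gamma(1)=\Gamma(2)=1$ used to pin down $\alpha_{\ast}$ in $(1,2)$.
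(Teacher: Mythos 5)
Your proof is correct and follows essentially the same route as the paper: for $(i)$ the same split of the integrals at $x=1$ with the bounds $x^{\alpha-1}\le x^{-1/2}$ on $(0,1)$ and $x^{\alpha-1}\le 1$ on $[1,\infty)$, and for $(ii)$ the same use of $\Gamma(1)=\Gamma(2)=1$ together with $\Gamma''>0$ to locate the unique critical point $\alpha_\ast\in(1,2)$. Your explicit justification of differentiation under the integral sign is a welcome addition that the paper leaves implicit.
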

	\begin{proof}
$(i)$ It is easy to get
	$$\Gamma'(\alpha) = \int_0^\infty x^{\alpha -1}e^{-x} \ln{x}dx \ \ \mbox{and } \ \ \Gamma''(\alpha) = \int_0^\infty x^{\alpha -1}e^{-x} \ln^2{x}.$$
	Therefore,
		\begin{align*}
		|\Gamma'(\alpha)| &\leq \int_0^\infty x^{\alpha -1}e^{-x} |\ln{x}|dx  =  \int_0^1  x^{\alpha -1}e^{-x}|\ln{x}|dx +  \int_1^\infty  x^{\alpha -1}e^{-x}|\ln{x}|dx \\
		& \leq -\int_0^1 x^{-1/2}\ln{x}dx + \int_1^\infty e^{-x}\ln{x}dx
		=4+\int_1^\infty e^{-x}\ln{x}dx.
		\end{align*}
		and
		\begin{align*}
		|\Gamma''(\alpha)| &=  \int_0^1  x^{\alpha -1}e^{-x}\ln^2{x}dx +  \int_1^\infty  x^{\alpha -1}e^{-x}\ln^2{x}dx \\
		& \leq \int_0^1 x^{-1/2}\ln^2{x}dx + \int_1^\infty e^{-x}\ln^2{x}dx\\&
		=16+\int_1^\infty e^{-x}\ln^2{x}dx.
		\end{align*}
So it holds that
$$|\Gamma'(\alpha)|+|\Gamma''(\alpha)|\leq 20+\int_1^\infty e^{-x}\ln{x}dx+\int_1^\infty e^{-x}\ln^2{x}dx=:C<\infty.$$		
$(ii)$ We observe that $\Gamma(1)=1=\Gamma(2).$ Then there exists $\alpha_\ast\in (1,2)$ such that $\Gamma'(\alpha_\ast)=0.$ On the other hand, $\Gamma''(\alpha)>0 \ \ \mbox{for all} \ \alpha\in (0, \infty).$
So $\Gamma(\alpha)$ is decreasing in $(0,\alpha_\ast)$ and  increasing in $(\alpha_\ast, \infty).$
	\end{proof}
	\begin{lemma}\label{l2}
		For all $\alpha,\beta\in \left(\frac{1}{2},1\right]$. There exists a constant $C>0$ such that
\begin{equation}\label{tfg}
\left|\frac{1}{\Gamma(\alpha)}-\frac{1}{\Gamma(\beta)}\right|\le C|\alpha-\beta| .
\end{equation}
	\end{lemma}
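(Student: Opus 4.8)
The plan is to reduce (\ref{tfg}) to the mean value theorem for $\Gamma$ on the interval $\left(\frac12,1\right]$, using both parts of Lemma \ref{l1}. First I would write
\[
\left|\frac{1}{\Gamma(\alpha)}-\frac{1}{\Gamma(\beta)}\right| = \frac{|\Gamma(\beta)-\Gamma(\alpha)|}{\Gamma(\alpha)\,\Gamma(\beta)},
\]
so that it suffices to bound the numerator from above and the denominator from below, uniformly in $\alpha,\beta\in\left(\frac12,1\right]$.

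For the denominator, I would invoke Lemma \ref{l1}$(ii)$: since $\alpha_\ast\in(1,2)$, the function $\Gamma$ is decreasing on $(0,\alpha_\ast)$, hence on $\left(\frac12,1\right]$, and therefore $\Gamma(\alpha)\ge \Gamma(1)=1$ for every $\alpha\in\left(\frac12,1\right]$. This gives $\Gamma(\alpha)\Gamma(\beta)\ge 1$. For the numerator, I would apply the mean value theorem: there exists $\xi$ between $\alpha$ and $\beta$, hence $\xi\in\left(\frac12,1\right]$, with $\Gamma(\beta)-\Gamma(\alpha)=\Gamma'(\xi)(\beta-\alpha)$; by Lemma \ref{l1}$(i)$ we have $|\Gamma'(\xi)|\le C$ uniformly, so $|\Gamma(\beta)-\Gamma(\alpha)|\le C|\alpha-\beta|$. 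Combining the two bounds yields (\ref{tfg}).

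There is essentially no obstacle here. The only place where the specific structure of $\Gamma$ (beyond mere continuity) is used is the uniform lower bound on $\Gamma$ over the half-open interval $\left(\frac12,1\right]$, whose left endpoint is not included — this is precisely supplied by the monotonicity in part $(ii)$ — together with the uniform Lipschitz bound on $\Gamma$, which is part $(i)$.
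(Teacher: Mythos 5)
Your proof is correct and follows essentially the same route as the paper: reduce to $|\Gamma(\beta)-\Gamma(\alpha)|$ via the lower bound $\Gamma(\alpha),\Gamma(\beta)\ge\Gamma(1)=1$, then apply the mean value theorem together with the uniform bound on $\Gamma'$ from Lemma \ref{l1}$(i)$. The only cosmetic difference is that you explicitly cite Lemma \ref{l1}$(ii)$ to justify the lower bound on $\Gamma$, which the paper states without elaboration.
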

	\begin{proof}
Since $\Gamma(\beta),\Gamma(\alpha)\geq \Gamma(1)=1$ for $\alpha,\beta\in \left(\frac{1}{2},1\right],$ we have
		$$ \bigg|\frac{1}{\Gamma(\alpha)}-\frac{1}{\Gamma(\beta)}\bigg|\leq |\Gamma(\beta)-\Gamma(\alpha)|. $$
So, using the Lagrange theorem, there exists $\xi$ lying between $\alpha$ and $\beta$ such that
$$
\bigg|\frac{1}{\Gamma(\alpha)}- \frac{1}{\Gamma(\beta)}\bigg| \leq |\Gamma'(\xi)(\alpha -\beta)|,
$$
and hence, by the part $(i)$ of Lemma \ref{l1}, we get
$$
\bigg|\frac{1}{\Gamma(\alpha)}- \frac{1}{\Gamma(\beta)}\bigg| \leq C|\alpha -\beta|.
$$
The proof of the lemma is complete.
	\end{proof}
\begin{lemma}\label{lm3}
		Let $\frac{1}{2}<\alpha_0<1.$ For all $\alpha,\beta\in [\alpha_0,1]$, we have
		\begin{align}\label{hh2}
	 \int_0^t\left((t-s)^{\alpha-1}-(t-s)^{\beta-1}\right)^2ds\le C(t^{2\alpha-1}+t^{2\beta-1})(\ln^2t+1)(\alpha-\beta)^2
		\end{align}
		and
			\begin{align}\label{hh2.1}
		\int_0^t\left((t-s)^{\alpha-1}-(t-s)^{\beta-1}\right)^2\ln^2(t-s)ds\le C(t^{2\alpha-1}+t^{2\beta-1})(\ln^4t+1)(\alpha-\beta)^2,
		\end{align}
		where $C$ is a positive constant depending only on $\alpha_0.$
	\end{lemma}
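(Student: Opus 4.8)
The plan is to first remove the shift by the change of variable $u=t-s$, which reduces both \eqref{hh2} and \eqref{hh2.1} to estimating
\[
J_m(\alpha,\beta):=\int_0^t\big(u^{\alpha-1}-u^{\beta-1}\big)^2(\ln u)^{2m}\,du,
\]
with $m=0$ for \eqref{hh2} and $m=1$ for \eqref{hh2.1}. Since the integrand and the right-hand sides are symmetric in $\alpha$ and $\beta$, I may assume $\beta\le\alpha$ (the case $\alpha=\beta$ being trivial). The key starting point is the elementary identity
\[
u^{\alpha-1}-u^{\beta-1}=\int_\beta^\alpha u^{r-1}\ln u\,dr,\qquad u\in(0,t],
\]
which follows from the fundamental theorem of calculus applied to $r\mapsto u^{r-1}$. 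The Cauchy--Schwarz inequality then yields the pointwise bound $\big(u^{\alpha-1}-u^{\beta-1}\big)^2\le(\alpha-\beta)\int_\beta^\alpha u^{2r-2}(\ln u)^2\,dr$.

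Next I would multiply by $(\ln u)^{2m}$, integrate in $u$ over $(0,t)$, and apply Tonelli's theorem (all quantities being nonnegative) to exchange the order of integration, obtaining
\[
J_m(\alpha,\beta)\le(\alpha-\beta)\int_\beta^\alpha\left(\int_0^t u^{2r-2}(\ln u)^{2m+2}\,du\right)dr.
\]
The whole proof thus comes down to a bound, uniform in $r\in[\beta,\alpha]$, for the inner integral $\int_0^t u^{a-1}(\ln u)^{k}\,du$ with $a:=2r-1$ and $k:=2m+2\in\{2,4\}$. Note that $r\in[\alpha_0,1]$ forces $a\in[2\alpha_0-1,1]$, so $a$ is bounded away from $0$; this is exactly where the hypothesis $\alpha_0>\tfrac12$ enters, and also what guarantees convergence of the integral at $u=0$.

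For the inner integral I would integrate by parts $k$ times to get the explicit formula
\[
\int_0^t u^{a-1}(\ln u)^{k}\,du=t^{a}\sum_{j=0}^{k}(-1)^{j}\,\frac{k!}{(k-j)!}\,\frac{(\ln t)^{k-j}}{a^{j+1}}\qquad(a>0),
\]
the boundary contribution at $0$ vanishing because $a>0$. Since $a\in[2\alpha_0-1,1]$, each coefficient $a^{-(j+1)}$ is bounded by a constant depending only on $\alpha_0$, and since $k$ is even we have $|\ln t|^{j}\le(\ln t)^{k}+1$ for $0\le j\le k$; hence $\int_0^t u^{a-1}(\ln u)^{k}\,du\le C_{\alpha_0}\,t^{2r-1}\big((\ln t)^{k}+1\big)$. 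Because $r\mapsto t^{2r-1}$ is monotone, $t^{2r-1}\le t^{2\alpha-1}+t^{2\beta-1}$ for $r\in[\beta,\alpha]$. Substituting these estimates into the displayed reduction and bounding the outer integral over $[\beta,\alpha]$ by its length, I get
\[
J_m(\alpha,\beta)\le C_{\alpha_0}\,(\alpha-\beta)^2\,\big(t^{2\alpha-1}+t^{2\beta-1}\big)\big((\ln t)^{2m+2}+1\big),
\]
which is \eqref{hh2} for $m=0$ and \eqref{hh2.1} for $m=1$ after undoing the substitution.

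The computation is essentially routine bookkeeping; the one point to watch is keeping every constant uniform in $r$. This rests on $a=2r-1$ remaining bounded away from $0$ — i.e.\ on $\alpha_0>\tfrac12$ — and on extracting the factor $t^{a}$ before estimating, so that no $r$-dependent quantity is absorbed into the power of $\ln t$.
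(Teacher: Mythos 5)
Your proof is correct and follows essentially the same route as the paper: both extract the factor $(\alpha-\beta)\,(t-s)^{r-1}\ln(t-s)$ by differentiating the kernel in the exponent $r$ and then reduce the claim to the uniform bound $\int_0^t u^{2r-2}(\ln u)^{k}\,du\le C_{\alpha_0}\,t^{2r-1}\big((\ln t)^{k}+1\big)$, which rests on $2\alpha_0-1>0$. The only cosmetic differences are that you use the integral form of the increment together with Cauchy--Schwarz where the paper invokes the Lagrange mean value theorem (with an $s$-dependent intermediate point $\gamma$), and you evaluate the inner integral by repeated integration by parts where the paper substitutes $v=(t-s)/t$ and dominates the exponent by $2\alpha_0-2$.
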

	\begin{proof}
		Put $H_{z}(t,s):=(t-s)^{z-1}$. We have
		$$ \frac{\partial H_{z}(t,s)}{\partial z}=(t-s)^{z-1}\ln(t-s).$$
		By using the Lagrange theorem, we obtain
		\begin{align*}
		H_{\beta}(t,s) -H_{\alpha}(t,s)&=(\beta-\alpha)(t-s)^{\gamma-1}\ln(t-s)
		\end{align*}
		where $\gamma$ is  between $\alpha$ and $\beta$. Hence,
		\begin{align*}
		\int_0^t\big(&(t-s)^{\alpha-1}-(t-s)^{\beta-1}\big)^2ds\\
&= \int_0^t\left(	H_{\beta}(t,s) -H_{\alpha}(t,s)\right)^2ds=(\alpha-\beta)^2\int_0^t(t-s)^{2\gamma-2}\ln^2(t-s)ds\\
&\leq (\alpha-\beta)^2\int_0^t\left((t-s)^{2\alpha-2}+(t-s)^{2\beta-2}\right)\ln^2(t-s)ds\\
		&=(\alpha-\beta)^2\left(t^{2\alpha-1}\int_0^1v^{2\alpha-2}(\ln t+\ln v)^2dv+t^{2\beta-1}\int_0^1v^{2\beta-2}(\ln t+\ln v)^2dv\right)\\
&\leq (\alpha-\beta)^2\left(t^{2\alpha-1}+t^{2\beta-1}\right)\int_0^1v^{2\alpha_0-2}(\ln t+\ln v)^2dv,
		\end{align*}
 which give us the required estimate (\ref{hh2}). The proof of \eqref{hh2.1} can be done similarly. So we omit it.
	\end{proof}
\begin{lemma}\label{ger}Let $v:[0,T]\rightarrow [0,\infty)$ be a real function and $\omega(.)$ is a non-negative continuous function on $[0,T]$ and there are constants $a>0$ and $0<\eta <1$ such that
		$$v(t) \le \omega(t)+ a\int_0^t (t-s)^{\eta-1}v(s)ds,\,\,0\leq t\leq T.$$
		Then, we have
\begin{equation}\label{dhk3}
v(t)\leq 2\omega(t)
+\frac{8at^{\frac{\eta}{2}}}{\eta}\exp\left(\frac{4^{\frac{2}{\eta}}a^{\frac{2}{\eta}}t^2}{\eta^{\frac{2}{\eta}}}\right)
\left(\int_0^t \omega^{\frac{2}{\eta}}(s)ds\right)^{\frac{\eta}{2}},\,\,0\leq t\leq T.
\end{equation}
If, in addition, the function $w$ is non-decreasing, then
\begin{equation}\label{dhk}
v(t)\leq 2\omega(t)\exp\left(\frac{4^{\frac{2}{\eta}}a^{\frac{2}{\eta}}t^2}{\eta^{\frac{2}{\eta}}}\right),\,\,0\leq t\leq T.
\end{equation}
\end{lemma}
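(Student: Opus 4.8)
The plan is to remove the weak singularity of the kernel by a H\"older step, turning the hypothesis into an honest linear (non-singular) integral inequality, and then to run a classical Gronwall argument. Concretely, apply H\"older's inequality with the conjugate exponents $p=\frac{2}{2-\eta}$ and $q=\frac{2}{\eta}$ to the integral term. Since $p(\eta-1)=\frac{2(\eta-1)}{2-\eta}>-1$ for $\eta>0$, the singular factor is $p$-integrable and
\[
\int_0^t(t-s)^{\eta-1}v(s)\,ds\le\Big(\int_0^t(t-s)^{\frac{2(\eta-1)}{2-\eta}}\,ds\Big)^{\frac{2-\eta}{2}}\Big(\int_0^t v(s)^{2/\eta}\,ds\Big)^{\eta/2}=\Big(\tfrac{2-\eta}{\eta}\Big)^{\frac{2-\eta}{2}}t^{\eta/2}\Big(\int_0^t v(s)^{2/\eta}\,ds\Big)^{\eta/2}.
\]
(For this to make sense one assumes, as is legitimate in every application, that $v\in L^{2/\eta}[0,T]$; in general one first bootstraps to $v$ bounded by the classical iterated-kernel trick.) Plugging this into the hypothesis, raising to the power $2/\eta\ge1$ and using $(x+y)^{2/\eta}\le 2^{2/\eta-1}(x^{2/\eta}+y^{2/\eta})$, the function $u:=v^{2/\eta}$ satisfies a genuine linear inequality
\[
u(t)\le g(t)+Bt\int_0^t u(s)\,ds,\qquad g(t):=2^{2/\eta-1}\omega(t)^{2/\eta},\quad B:=2^{2/\eta-1}\Big(a\big(\tfrac{2-\eta}{\eta}\big)^{(2-\eta)/2}\Big)^{2/\eta}.
\]

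To solve this, set $\phi(t):=\int_0^t u(s)\,ds$, so that $\phi'(t)=u(t)\le g(t)+Bt\,\phi(t)$ a.e.; multiplying by $e^{-Bt^2/2}$ and integrating gives $\phi(t)\le e^{Bt^2/2}\int_0^t g(s)e^{-Bs^2/2}\,ds$, hence $u(t)\le g(t)+Bt\,e^{Bt^2/2}\int_0^t g(s)e^{-Bs^2/2}\,ds$. For the first conclusion I would simply bound $e^{-Bs^2/2}\le1$ and $\int_0^t g(s)\,ds=2^{2/\eta-1}\int_0^t\omega^{2/\eta}$, then take $(\eta/2)$-th powers using the subadditivity $(x+y)^{\eta/2}\le x^{\eta/2}+y^{\eta/2}$ (valid since $0<\eta/2\le1$); this yields $v(t)\le 2^{1-\eta/2}\omega(t)+(2^{2/\eta-1}B)^{\eta/2}t^{\eta/2}e^{B\eta t^2/4}\big(\int_0^t\omega^{2/\eta}\big)^{\eta/2}$. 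For the second conclusion, with $\omega$ (hence $g$) non-decreasing, I would instead estimate $\int_0^t g(s)e^{-Bs^2/2}\,ds\le g(t)\int_0^t e^{-Bs^2/2}\,ds\le t\,g(t)$, so that $u(t)\le g(t)\big(1+Bt^2e^{Bt^2/2}\big)\le g(t)\,e^{3Bt^2/2}$, and then take $(\eta/2)$-th powers.

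What remains is bookkeeping: check that $2^{1-\eta/2}\le2$; that $(2^{2/\eta-1}B)^{\eta/2}=2^{2-\eta}a\big(\tfrac{2-\eta}{\eta}\big)^{(2-\eta)/2}\le\frac{8a}{\eta}$ using $2^{2-\eta}\le4$ and $\big(\tfrac{2-\eta}{\eta}\big)^{(2-\eta)/2}\le\tfrac{2-\eta}{\eta}\le\tfrac{2}{\eta}$ (all because $0<\eta\le1$); and that $\tfrac{B\eta}{4}\le\frac{4^{2/\eta}a^{2/\eta}}{\eta^{2/\eta}}$, even with the extra factor $3$ coming from the monotone case, since $B\le2^{4/\eta-1}a^{2/\eta}\eta^{-2/\eta}$ after the analogous estimate $\big(\tfrac{2-\eta}{\eta}\big)^{(2-\eta)/\eta}\le\big(\tfrac2\eta\big)^{2/\eta}$. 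The only real idea here is the H\"older step with exponent $2/\eta$; after that it is a textbook linear Gronwall plus constant-chasing, and the slack in the constants (factors $2^{2-\eta}\le4$ and $\eta<1$) is exactly what lets the crude estimates $\int_0^te^{-Bs^2/2}ds\le t$ and $1+x\le e^x$ get through. I expect the fiddliest point to be the monotone case — squeezing $1+Bt^2e^{Bt^2/2}$ under $\exp\!\big(4^{2/\eta}a^{2/\eta}t^2/\eta^{2/\eta}\big)$ while keeping the leading factor $2$ — together with the mild technical caveat about $v\in L^{2/\eta}$ needed to justify the H\"older step.
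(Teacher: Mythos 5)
Your proof is correct and follows essentially the same route as the paper: a H\"older step with conjugate exponents $\frac{2}{2-\eta}$ and $\frac{2}{\eta}$ converts the singular-kernel inequality into a linear Gronwall inequality for $v^{2/\eta}$, after which classical Gronwall and taking $(\eta/2)$-th powers give both bounds (the paper raises to the power $q$ first and applies H\"older inside, then sets $q=2/\eta$, which yields the identical inequality for $v^{2/\eta}$). Your constant-chasing and the monotone case check out, and the integrability caveat you flag is equally implicit in the paper's argument.
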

\begin{proof} For any $q>\frac{1}{\eta},$ by using H\"older's inequality, we get
\begin{align*}
v^q(t) & \le \left(\omega(t)+ a\int_0^t (t-s)^{\eta-1}v(s)ds\right)^q\\
&\le 2^{q-1}\omega^q(t)+ 2^{p-1}a^q\left(\int_0^t (t-s)^{\eta-1}v(s)ds\right)^q\\
&\leq 2^{q-1}\omega^q(t)+ 2^{p-1}a^q\left(\int_0^t (t-s)^{\frac{q(\eta-1)}{q-1}}ds\right)^{q-1}\int_0^tv^q(s)ds\\
&= 2^{q-1}\omega^q(t)+ 2^{q-1}a^q\left(\frac{q-1}{q\eta-1}\right)^{q-1}t^{q\eta-1}\int_0^tv^q(s)ds,\,\,0\leq t\leq T.
\end{align*}
Fixed $t\in [0,T],$ we have
$$v^q(x) \leq 2^{q-1}\omega^q(x)+ 2^{q-1}a^q\left(\frac{q-1}{q\eta-1}\right)^{q-1}t^{q\eta-1}\int_0^xv^q(s)ds,\,\,0\leq x\leq t.$$
So, by using Gronwall's lemma, we obtain
\begin{multline}\label{bao}
v^q(x)\leq 2^{q-1}\omega^q(x)
+4^{q-1}a^q\left(\frac{q-1}{q\eta-1}\right)^{q-1}t^{q\eta-1}\\
\times\int_0^x \omega^q(s)\exp\left(2^{q-1}a^q\left(\frac{q-1}{q\eta-1}\right)^{q-1}t^{q\eta-1}(x-s)\right)ds,\,\,0\leq x\leq t.
\end{multline}
This, together with the fundamental inequality $(a+b)^{\frac{1}{q}}\leq a^{\frac{1}{q}}+b^{\frac{1}{q}}\,\forall\,a,b\geq 0,$ gives us
\begin{multline*}
v(x)\leq 2\omega(x)\\
+4a\left(\frac{q-1}{q\eta-1}\right)^{\frac{q-1}{q}}t^{\eta-\frac{1}{q}}\exp\left(\frac{2^{q-1}a^q}{q}\left(\frac{q-1}{q\eta-1}\right)^{q-1}t^{q\eta}\right)
\left(\int_0^x \omega^q(s)ds\right)^{\frac{1}{q}},\,\,0\leq x\leq t.
\end{multline*}
Choosing $x=t$ and $q=\frac{2}{\eta}$ yields
$$
v(t)\leq 2\omega(t)
+\frac{8at^{\frac{\eta}{2}}}{\eta}\exp\left(\frac{4^{\frac{2}{\eta}}a^{\frac{2}{\eta}}t^2}{\eta^{\frac{2}{\eta}}}\right)
\left(\int_0^t \omega^{\frac{2}{\eta}}(s)ds\right)^{\frac{\eta}{2}},\,\,0\leq t\leq T.
$$
So the inequality (\ref{dhk3}) is verified. When the function $w$ is non-decreasing, we have $\omega^q(s)\leq \omega^q(x)$ for $s\leq x,$ and hence, (\ref{bao}) reduces to the following
$$v^q(x)\leq 2^{q-1}\omega^q(t)\exp\left(2^{q-1}a^q\left(\frac{q-1}{q\eta-1}\right)^{q-1}t^{q\eta-1}x\right),\,\,0\leq x\leq t.$$
Choosing $x=t$ and $q=\frac{2}{\eta}$ gives us (\ref{dhk}). The proof of the lemma is complete.
\end{proof}
\begin{rem}The Gronwall-type lemma for singular kernels plays an important role in the theory of fractional differential equations. Under the assumption of Lemma \ref{ger}, it was proved in \cite{henry} that
$$v(t) \le \omega(t) + \int_0^t \Big[\sum\limits_{n=1}^\infty \frac{(a\Gamma(\beta))^n}{\Gamma(n\beta)}(t-s)^{n\beta-1}\omega(s)\Big]ds,\,\,0\leq t\leq T.$$
The above estimate has been widely used in the literature. Here, we need to use estimates (\ref{dhk3}) and (\ref{dhk}) for establishing the estimates bounded uniformly in $\alpha.$
\end{rem}
	\subsection{Malliavin calculus}
	Let us recall some elements of Malliavin calculus. We suppose that$(B_t)_{t\in [0, T]}$ is defined on a complete probability space $(\Omega ,\mathcal{F},\mathbb{F},P)$, where $\mathbb{F}= (\mathcal{F}_t)_{t \in [0,T]}$ is a natural filtration generated by the Brownian motion $B.$ For $h \in L^2[0, T]$, we denote by $B(h)$ the Wiener integral $$B(h)= \int_0^Th(t)dB_t.$$
	Let $\mathcal{S}$ denote a dense subset of $L^2(\Omega ,\mathcal{F}, P)$ that consists of smooth random variables of the form
	\begin{align}\label{e4q1}
	F = f(B(h_1),B(h_2),..., B(h_n) ),
	\end{align}
	where $n \in \mathbb{N}, f \in C_0^\infty(\mathbb{R}^n), h_1, h_2, ..., h_n \in L^2[0,T]$. If $F$ has the form (\ref{e4q1}), we define its Malliavin derivative as the process $DF:= {D_tF, t\in [0,T]}$ given by
	$$D_tF =\sum_{k=1}^{n}\frac{\partial f}{\partial x_k}(B(h_1),B(h_2),..., B(h_n))h_k(t).$$
	More generally, for each $k\ge 1,$ we can define the iterated derivative operator on a cylindrical random variable by setting
	$$ D_{t_1,...,t_k}^{k}F=D_{t_1}...D_{t_k}F. $$
	For any $1 \le p,k< \infty$, we denote by $\mathbb{D}^{k,p}$ the closure of $\mathcal{S}$ with respect to the norm
	$$||F||_{k,p}^p:= E|F|^p + E\left[\bigg(\int_0^T|D_uF|^2du\bigg)^{\frac{p}{2}}\right]+...+E\left[\bigg(\int_0^T...\int_0^T|D^{k}_{t_1,...,t_k}F|^2dt_1...dt_k\bigg)^{\frac{p}{2}}\right].$$
A random variable $F$ is said to be Malliavin differentiable if it belongs to $\mathbb{D}^{1,2}$.
	For the convenience of the reader, we recall that a derivative operator $D$ satisfies the chain rule, i.e, $D\phi(F) = \phi'(F)DF$. Furthermore, we have the following relations between Malliavin derivative and the integrals
	$$D_r\bigg( \int_{0}^Tu_sds\bigg)= \int_{r}^TD_ru_sds$$
	and $$D_r\bigg( \int_{0}^Tu_sdB_s\bigg)= u_r+\int_{r}^TD_ru_sdB_s,0 \leq r\leq T, $$
	where $(u_t)_{t\in [0, T]}$ is an $\mathbb{F}$-adapted and Malliavin differentiable stochastic process.
	
	An important operator in the Malliavin's calculus theory is the divergence operator $\delta$, it is the adjoint of the derivative operator $D$. The domain of $\delta$ is the set of all functions $u\in L^2(\Omega, L^2[0,T])$ such that
	$$E|\langle DF, u \rangle_{L^2[0,T]}|\leq C(u)\|F\|_{L^2(\Omega)},$$
	where $C(u)$ is some positive constant depending on $u$. Let $F\in \mathbb{D}^{1,2}$ and $u\in Dom \delta$ such that $Fu\in L^2(\Omega \times[0,T])$ Then $\delta(u)$ is characterized by the following   relationship
	\begin{equation}\label{0jkd3}
	\delta(Fu)=F\delta(u)-\langle DF, u \rangle_{L^2[0,T]}.
	\end{equation}
We have the following general result.
\begin{lemma} \label{dltq}Let $F_1 \in \mathbb{D}^{2,4}$ be such that  $\|DF_1\|_{L^2[0,T]}>0\,\,a.s.$ Then, for any random variable $F_2\in \mathbb{D}^{1,2}$ and any measurable function $g$ with $\|g\|_\infty:=\sup\limits_{x\in \mathbb{R}}|g(x)|\leq 1,$  we have
\begin{align}
&|Eg(F_1)-Eg(F_2)|\notag\\
&\leq C \left(E\|DF_1\|^{-8}_{L^2[0,T]}E\left(\int_0^T\int_0^T|D_\theta D_rF_1|^2d\theta dr\right)^2+(E\|DF_1\|^{-2}_{L^2[0,T]})^2\right)^{\frac{1}{4}}\|F_1-F_2\|_{1,2},\label{uu4}
\end{align}
provided that the expectations exist, where $C$ is an absolute constant.
\end{lemma}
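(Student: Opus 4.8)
The plan is to combine a Malliavin integration-by-parts formula involving only $F_1$ with a direct $L^2(\Omega)$-estimate of the resulting Skorokhod weight. Throughout, write $\|\cdot\|:=\|\cdot\|_{L^2[0,T]}$ and $\langle\cdot,\cdot\rangle:=\langle\cdot,\cdot\rangle_{L^2[0,T]}$, put $h(x):=\int_0^x g(y)\,dy$ (so that $h$ is globally Lipschitz with constant $\|g\|_\infty\le 1$, whence $h(F_1),h(F_2)\in\mathbb{D}^{1,2}$ and $h'=g$ a.e.), and set $u:=DF_1/\|DF_1\|^2$, which is well defined $P$-a.s. since $\|DF_1\|>0$ a.s. The assumption $F_1\in\mathbb{D}^{2,4}$ together with the finiteness of the expectations appearing on the right-hand side guarantees that $u\in\mathbb{D}^{1,2}(L^2[0,T])\subset\mathrm{Dom}\,\delta$ with $\delta(u)\in L^2(\Omega)$. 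I would first establish the identity $Eg(F_1)=E[h(F_1)\delta(u)]$. For $g\in C_b^\infty$ it is immediate: the chain rule gives $\langle Dh(F_1),u\rangle=g(F_1)\|DF_1\|^{-2}\langle DF_1,DF_1\rangle=g(F_1)$, and the duality between $D$ and $\delta$ yields $Eg(F_1)=E\langle Dh(F_1),u\rangle=E[h(F_1)\delta(u)]$. For an arbitrary measurable $g$ with $\|g\|_\infty\le 1$ one passes to the limit along mollifications $g_n=g\ast\rho_{1/n}$: one has $\|g_n\|_\infty\le 1$, $g_n\to g$ Lebesgue-a.e., and $h_n\to h$ pointwise with uniformly bounded Lipschitz constants, so since $F_1$ has an absolutely continuous law (Bouleau--Hirsch, because $\|DF_1\|>0$ a.s.) and $F_1,\delta(u)\in L^2(\Omega)$, dominated convergence transfers the identity. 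The same reasoning gives $E[h(F_2)\delta(u)]=E\langle Dh(F_2),u\rangle=E[g(F_2)\langle DF_2,u\rangle]$, the chain rule $Dh(F_2)=g(F_2)DF_2$ being valid for $F_2\in\mathbb{D}^{1,2}$ because the law of $F_2$ restricted to $\{\|DF_2\|>0\}$ is absolutely continuous.

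Next comes the algebraic decomposition
$$Eg(F_1)-Eg(F_2)=E\big[(h(F_1)-h(F_2))\delta(u)\big]+E\Big[g(F_2)\Big(\frac{\langle DF_1,DF_2\rangle}{\|DF_1\|^2}-1\Big)\Big],$$
obtained by adding and subtracting $E[h(F_2)\delta(u)]$ and using the two identities above. For the second term I would write $\frac{\langle DF_1,DF_2\rangle}{\|DF_1\|^2}-1=\frac{\langle DF_1,DF_2-DF_1\rangle}{\|DF_1\|^2}$ and apply the Cauchy--Schwarz inequality in $L^2[0,T]$ and then in $L^2(\Omega)$ to bound it by $\|g\|_\infty(E\|DF_1\|^{-2})^{1/2}(E\|DF_1-DF_2\|^2)^{1/2}\le(E\|DF_1\|^{-2})^{1/2}\|F_1-F_2\|_{1,2}$. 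For the first term, the Lipschitz estimate $|h(F_1)-h(F_2)|\le|F_1-F_2|$ gives $E[|F_1-F_2|\,|\delta(u)|]\le\|F_1-F_2\|_{L^2(\Omega)}\|\delta(u)\|_{L^2(\Omega)}\le\|F_1-F_2\|_{1,2}\|\delta(u)\|_{L^2(\Omega)}$.

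The core computation is the estimate of $\|\delta(u)\|_{L^2(\Omega)}$. From the standard bound $\|\delta(u)\|_{L^2(\Omega)}^2\le E\|u\|^2+E\int_0^T\int_0^T|D_\theta u_r|^2\,d\theta\,dr$ and $\|u\|^2=\|DF_1\|^{-2}$, it remains to control $D_\theta u_r$. The quotient and chain rules give
$$D_\theta u_r=\frac{D_\theta D_rF_1}{\|DF_1\|^2}-\frac{2\,D_rF_1\int_0^T D_sF_1\,D_\theta D_sF_1\,ds}{\|DF_1\|^4};$$
estimating $|D_\theta u_r|^2$ by a sum of two squares, applying Cauchy--Schwarz to the inner $ds$-integral, and integrating in $(\theta,r)$ yields $\int_0^T\int_0^T|D_\theta u_r|^2\,d\theta\,dr\le C\|DF_1\|^{-4}\int_0^T\int_0^T|D_\theta D_rF_1|^2\,d\theta\,dr$. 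Taking expectations and using Cauchy--Schwarz in $\Omega$, the right-hand side is at most $C(E\|DF_1\|^{-8})^{1/2}\big(E(\int_0^T\int_0^T|D_\theta D_rF_1|^2\,d\theta\,dr)^2\big)^{1/2}$, which is finite exactly because $F_1\in\mathbb{D}^{2,4}$. Hence $\|\delta(u)\|_{L^2(\Omega)}^2\le E\|DF_1\|^{-2}+C(E\|DF_1\|^{-8})^{1/2}\big(E(\int\int|D_\theta D_rF_1|^2\,d\theta\,dr)^2\big)^{1/2}$.

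Assembling the three bounds gives $|Eg(F_1)-Eg(F_2)|\le\|F_1-F_2\|_{1,2}\big((E\|DF_1\|^{-2})^{1/2}+\|\delta(u)\|_{L^2(\Omega)}\big)$, and the elementary inequalities $\sqrt{a+b}\le\sqrt a+\sqrt b$ and $a^{1/4}+b^{1/4}\le 2^{3/4}(a+b)^{1/4}$ (for $a,b\ge0$) turn the prefactor into $C\big(E\|DF_1\|^{-8}E(\int\int|D_\theta D_rF_1|^2\,d\theta\,dr)^2+(E\|DF_1\|^{-2})^2\big)^{1/4}$ with $C$ absolute, which is precisely \eqref{uu4}. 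The step I expect to be the main obstacle is the reduction from smooth to merely measurable bounded test functions $g$: justifying the two integration-by-parts identities and the chain rule $Dh(F_2)=g(F_2)DF_2$ in that generality hinges on the absolute continuity of the law of $F_1$ and of the law of $F_2$ on $\{\|DF_2\|>0\}$; the rest is Cauchy--Schwarz bookkeeping together with the moment integrability implicit in the hypothesis ``provided that the expectations exist.''
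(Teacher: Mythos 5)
Your proposal is correct and follows essentially the same route as the paper: your decomposition of $Eg(F_1)-Eg(F_2)$ (after rewriting $\frac{\langle DF_1,DF_2\rangle}{\|DF_1\|^2}-1=-\frac{\langle DF_1-DF_2,DF_1\rangle}{\|DF_1\|^2}$ and noting $h(F_1)-h(F_2)=\int_{F_2}^{F_1}g(z)\,dz$) is exactly the identity \eqref{oldl1} that the paper invokes, and the remaining Cauchy--Schwarz estimates of the Skorokhod weight $\delta\bigl(DF_1/\|DF_1\|^2\bigr)$ match how \eqref{uu4} is derived from it. The only difference is that the paper outsources the details to Theorem 3.1 of \cite{Dung2022}, whereas you supply them in full (including the mollification step for merely measurable $g$), which is a faithful reconstruction rather than a different method.
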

\begin{proof}
This lemma is Theorem 3.1 in our recent paper \cite{Dung2022}. Here we note that the inequality (\ref{uu4}) follows from the relation
\begin{multline}
Eg(F_1)-Eg(F_2)\\=E\left[\int_{F_2}^{F_1} g(z)dz\delta\left(\frac{DF_1}{\|DF_1\|^2_{L^2[0,T]}}\right)\right]-E\left[\frac{g(F_2) \langle DF_1-DF_2 , DF_1 \rangle_{L^2[0,T]}}{\|DF_1\|^2_{L^2[0,T]}}\right].\label{oldl1}
\end{multline}
We also have $E\left[\left(\delta\bigg(\frac{DF_1}{\|DF_1\|^2_{L^2[0,T]}}\bigg)\right)^2\right]<\infty.$
\end{proof}

\section{Strong convergence}\label{kja}
As discussed in the introduction, the strong convergence of the solutions to the equation (\ref{eq1}) have been well studied.  In this section, our aim is to prove the optimality of the estimate (\ref{yfla1}) as $\alpha\to \beta.$ More specifically, in Theorem \ref{dl32m} below, we show that the limit
$Y_{\beta,t}:=\lim\limits_{\alpha\to \beta}\frac{X_{\alpha,t} -X_{\beta,t}}{\alpha-\beta}$ exists and can be computed explicitly.

Let us introduce some notations: We denote by $C$ (with or without an index) a generic constant which may vary at each appearance. Given a function $h(t,x),$ we denote
$$h'(t,x):=\frac{\partial h}{\partial x}(t,x),\quad h''(t,x):=\frac{\partial h}{\partial x^2}(t,x).$$
For any $a,b\in \mathbb{R},$ we denote $a\vee b=\max\{a,b\}, \  a\wedge b=\min\{a,b\}.$ In the proofs, we frequently use the fundamental inequality
 \begin{equation}\label{pt1}
(a_1+\cdots+a_n)^p\leq n^{p-1}(a_1^p+\cdots+a_n^p) \ \ \mbox{for all}  \ a_1,...,a_n\geq 0 \ \  \mbox{and} \ \ p\geq 2.
\end{equation}
We also frequently use a trick of  the  H\"{o}lder inequality as follows: For all $ p>1, q>1$ with $\frac{1}{p}+\frac{1}{q}=1,$ we have
\begin{equation}\label{pt3}
\left(\int\limits_{a}^b|f(t)g(t)|dt\right)^p=\left(\int\limits_{a}^b|f(t)|^{\frac{1}{q}+\frac{1}{p}}|g(t)|dt\right)^p\leq \left(\int\limits_{a}^b|f(t)|dt\right)^{\frac{p}{q}}\int\limits_{a}^b|f(t)||g(t)|^pdt.
\end{equation}
For the existence and uniqueness of the solutions, we make the use of the following assumption, see e.g. \cite{SonDT,Wang1}.
\begin{assum}\label{asum1} The coefficients $b, \sigma  :[0,T]\times \mathbb{R}\to \mathbb{R}$ are Lipschitz and have linear growth, that is, there exists $L>0$ such that
 $$ |b(t,x)-b(t,y)| +|\sigma(t,x)-\sigma(t,y)|\le L|x-y|\,\forall x,y\in \mathbb{R},t\in[0,T],$$
 and
 $$ |b(t,x)|+|\sigma(t,x)|\le L(1+|x|)\, \forall x\in \mathbb{R},t\in[0,T].$$
\end{assum}
We first need the following technical results.
	\begin{prop}\label{pro1}Suppose Assumption \ref{asum1}. Let $(X_{\alpha,t})_{ t\in [0, T]}$ be the solution to the equation (\ref{eq1}). Then, for every $\frac{1}{2}<\alpha_0<1$ and $p \geq 2,$ we have
		\begin{align}\label{pt2}
		\sup\limits_{ \alpha\in \left[\alpha_0,1\right]}\sup\limits_{t\in [0,T]}E|X_{\alpha,t}|^p \le C,
		\end{align}
		where $C$ is a positive constant depending only on $L, T, p,$ $\alpha_0$ and $x_0.$
\end{prop}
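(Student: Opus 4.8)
The plan is to run a standard Gronwall-type argument on the $p$-th moment, using the integral formulation \eqref{eq1} together with the singular Gronwall lemma (Lemma \ref{ger}) to absorb the singular kernel $(t-s)^{\alpha-1}$. First I would fix $\alpha_0 \in (\tfrac12,1)$, take $\alpha \in [\alpha_0,1]$, and apply the elementary inequality \eqref{pt1} with $n=3$ to split
$$E|X_{\alpha,t}|^p \le 3^{p-1}\Big(|x_0|^p + \Gamma(\alpha)^{-p}E\Big|\int_0^t(t-s)^{\alpha-1}b(s,X_{\alpha,s})\,ds\Big|^p + \Gamma(\alpha)^{-p}E\Big|\int_0^t(t-s)^{\alpha-1}\sigma(s,X_{\alpha,s})\,dB_s\Big|^p\Big).$$
Since $\Gamma(\alpha) \ge 1$ on $(\tfrac12,1]$ the Gamma factors are harmless.

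For the drift term I would use the Hölder trick \eqref{pt3} (with the kernel $(t-s)^{\alpha-1}$ playing the role of $|f|$) to pull out $\big(\int_0^t (t-s)^{\alpha-1}ds\big)^{p/q} \le (T^{\alpha}/\alpha)^{p-1} \le (T^{\alpha_0\vee 1}/\alpha_0)^{p-1}$, leaving $\int_0^t (t-s)^{\alpha-1}|b(s,X_{\alpha,s})|^p\,ds$; the linear growth bound in Assumption \ref{asum1} then gives $|b(s,X_{\alpha,s})|^p \le L^p 2^{p-1}(1+|X_{\alpha,s}|^p)$. For the stochastic term I would apply the Burkholder--Davis--Gundy inequality to get a bound by $C_p E\big(\int_0^t (t-s)^{2\alpha-2}\sigma(s,X_{\alpha,s})^2\,ds\big)^{p/2}$, then apply \eqref{pt3} again with the kernel $(t-s)^{2\alpha-2}$ (which is integrable precisely because $\alpha > \tfrac12$, uniformly in $\alpha \ge \alpha_0$ since $\int_0^t(t-s)^{2\alpha-2}ds = t^{2\alpha-1}/(2\alpha-1) \le T^{2\alpha-1}/(2\alpha_0-1)$), followed by the linear growth bound on $\sigma$. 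Collecting everything, and noting that the non-singular powers of $t$ and the integrals $\int_0^t(t-s)^{\alpha-1}ds$, $\int_0^t(t-s)^{2\alpha-2}ds$ are all bounded by constants depending only on $L,T,p,\alpha_0,x_0$, I arrive at
$$v(t) := E|X_{\alpha,t}|^p \le \omega(t) + a\int_0^t\big((t-s)^{\alpha-1} + (t-s)^{2\alpha-2}\big)v(s)\,ds$$
for a constant $\omega(t) \le C$ and $a = a(L,T,p,\alpha_0)$; since $(t-s)^{\alpha-1} \le 1 + (t-s)^{2\alpha-2}$ for all $s<t$ (splitting according to whether $t-s\ge1$ or $<1$, and adjusting $\omega$), this fits the hypothesis of Lemma \ref{ger} with $\eta = 2\alpha-1 \ge 2\alpha_0-1 > 0$ and a constant (hence non-decreasing) $\omega$. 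Estimate \eqref{dhk} then yields $v(t) \le 2\omega(t)\exp\big(4^{2/\eta}a^{2/\eta}t^2/\eta^{2/\eta}\big)$, and since $\eta = 2\alpha-1 \in [2\alpha_0-1,1]$ the exponential factor is bounded uniformly in $\alpha \in [\alpha_0,1]$ and $t\in[0,T]$. Taking the supremum over $t$ and then over $\alpha$ gives \eqref{pt2}.

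The main obstacle is purely bookkeeping: ensuring that every constant produced along the way (the BDG constant, the $\Gamma(\alpha)^{-p}$ factor, the powers of $T$ coming from $\int_0^t(t-s)^{\alpha-1}ds$ and $\int_0^t(t-s)^{2\alpha-2}ds$, and the exponent $\eta=2\alpha-1$ feeding into Lemma \ref{ger}) depends on $\alpha$ only through the lower bound $\alpha_0$, so that the final bound is genuinely uniform over $\alpha\in[\alpha_0,1]$ — this is exactly why the constraint $\alpha_0>\tfrac12$ is needed, as it keeps $2\alpha-1$ bounded away from $0$ and the singular kernel $(t-s)^{2\alpha-2}$ integrable with a uniformly controlled integral. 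A secondary point worth stating carefully is the well-posedness: one should note (citing Assumption \ref{asum1} and the references) that the solution exists, is unique, and lies in $L^p$ for each fixed $\alpha$, so that $v(t)=E|X_{\alpha,t}|^p$ is finite a priori and Gronwall's lemma may legitimately be applied; alternatively the estimate can be derived for a Picard iteration scheme and passed to the limit.
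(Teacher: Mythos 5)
Your proposal is correct and follows essentially the same route as the paper: split via \eqref{pt1}, control the stochastic integral with the Burkh\"older--Davis--Gundy inequality, apply the H\"{o}lder trick \eqref{pt3} together with the linear growth bound, and close with the singular Gronwall estimate \eqref{dhk} for $\eta=2\alpha-1$, checking that every constant depends on $\alpha$ only through $\alpha_0$. The only (immaterial) difference is that the paper first applies Cauchy--Schwarz to the drift term so that both integrals carry the kernel $(t-s)^{2\alpha-2}$ from the outset, whereas you keep $(t-s)^{\alpha-1}$ there and unify the two kernels at the end.
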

\begin{proof} This proposition is not new. We give a proof to show that the moments are bounded uniformly in $\alpha.$ By using the inequality \eqref{pt1}, the fact $\Gamma(\alpha)\geq 1$ and Burkh\"older-Davis-Gundy inequality, it follows from the equation \eqref{eq1} that
\begin{align*}			
E&\left|X_{\alpha,t}\right|^p\\
&\leq 3^{p-1}\left(|x_0|^p+E\left|\frac{1}{\Gamma(\alpha)}\int_0^t(t-s)^{\alpha-1}b(s,X_{\alpha,s})ds\right|^p+E\left|\frac{1}{\Gamma(\alpha)}\int_0^t(t-s)^{\alpha-1}\sigma(s,X_{\alpha,s})dB_s\right|^p\right)\\
&\leq C+CE\left|\int_0^t(t-s)^{\alpha-1}b(s,X_{\alpha,s})ds\right|^p+CE\left|\int_0^t(t-s)^{2\alpha-2}\sigma^2(s,X_{\alpha,s})ds\right|^{\frac{p}{2}},\,\,0\leq t\leq T,
\end{align*}
where $C$ is a positive constant depending only on $ p$ and  $x_0.$
		
		Next, by using the  H\"{o}lder inequality (\ref{pt3}), we obtain
			\begin{align*}
			E&\left|X_{\alpha,t}\right|^p \\
&\le C+Ct^{\frac{p}{2}}E\left|\int_0^t(t-s)^{2\alpha-2}b^2(s,X_{\alpha,s})ds\right|^{\frac{p}{2}}+CE\left|\int_0^t(t-s)^{2\alpha-2}\sigma^2(s,X_{\alpha,s})ds\right|^{\frac{p}{2}}\\
			&\le C+Ct^{\frac{p}{2}}\left(\int_0^t(t-s)^{2\alpha-2}ds\right)^{\frac{p}{2}-1}\int_0^t(t-s)^{2\alpha-2}E|b(s,X_{\alpha,s})|^pds\\
			&\qquad+C\left(\int_0^t(t-s)^{2\alpha-2}ds\right)^{\frac{p}{2}-1}\int_0^t(t-s)^{2\alpha-2}E|\sigma(s,X_{\alpha,s})|^pds,\,\,0\leq t\leq T.
\end{align*}
Then, under Assumption \ref{asum1}, we deduce
\begin{align*}
			&E\left|X_{\alpha,t}\right|^p \leq C+2^{p}L^pCt^{\frac{p}{2}}\left(\frac{t^{2\alpha-1}}{2\alpha-1}\right)^{\frac{p}{2}-1}\int_0^t(t-s)^{2\alpha-2}(1+E|X_{\alpha,s}|^p)ds\\
&\leq C+C\int_0^t(t-s)^{2\alpha-2}E|X_{\alpha,s}|^pds,\  \ \ 0 \le t\le T,
			\end{align*}
				where $C$ is a positive constant depending only on $L, T, p,x_0$ and $\alpha_0.$ Applying the inequality (\ref{dhk}) to $\eta=2\alpha-1,$ we get
$$E\left|X_{\alpha,t}\right|^p \leq 2C\exp\left(\frac{4^{\frac{2}{\eta}}C^{\frac{2}{\eta}}t^2}{\eta^{\frac{2}{\eta}}}\right),\,\,0 \le t\le T,$$
 which give us the required conclusion \eqref{pt2}. The proof of the proposition is complete.
		\end{proof}
 \begin{prop}\label{dl32} Suppose Assumption \ref{asum1}. Let $(X_{\alpha,t})_{t\in [0,T]}$ and $(X_{\beta,t})_{t\in [0,T]}$ be the solutions to the equations (\ref{eq1}) and  (\ref{eq1qc}), respectively. Then, for  every $p\geq 2$ and $\alpha_0\in (\frac{1}{2}, 1],$ we have
		\begin{align}\label{eq2}
E|	X_{\alpha,t} -X_{\beta,t}|^p \le Ct^{p(\alpha\wedge\beta)-\frac{p}{2}}(|\ln t|^p+1)|\alpha-\beta|^p\,\,\forall\,t\in [0,T],\alpha,\beta\in [\alpha_0,1],
		\end{align}
		where $C$ is a positive constant depending only on $L, T, p,$ $\alpha_0$ and $x_0$.
	\end{prop}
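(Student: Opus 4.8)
The plan is to estimate $X_{\alpha,t}-X_{\beta,t}$ directly from the integral equations \eqref{eq1} and \eqref{eq1qc} by adding and subtracting intermediate terms so that the difference decomposes into a part coming from the change of kernel and Gamma factor (with the \emph{same} process plugged in) plus a part coming from the change of the process (with the \emph{same} kernel). Concretely, writing $c_\alpha=1/\Gamma(\alpha)$ and using $a^\alpha$-kernels $(t-s)^{\alpha-1}$, I would split
\begin{align*}
X_{\alpha,t}-X_{\beta,t}
&=(c_\alpha-c_\beta)\!\left(\int_0^t(t-s)^{\alpha-1}b(s,X_{\alpha,s})\,ds+\int_0^t(t-s)^{\alpha-1}\sigma(s,X_{\alpha,s})\,dB_s\right)\\
&\quad+c_\beta\!\left(\int_0^t\big((t-s)^{\alpha-1}-(t-s)^{\beta-1}\big)b(s,X_{\alpha,s})\,ds+\int_0^t\big((t-s)^{\alpha-1}-(t-s)^{\beta-1}\big)\sigma(s,X_{\alpha,s})\,dB_s\right)\\
&\quad+c_\beta\!\left(\int_0^t(t-s)^{\beta-1}\big(b(s,X_{\alpha,s})-b(s,X_{\beta,s})\big)ds+\int_0^t(t-s)^{\beta-1}\big(\sigma(s,X_{\alpha,s})-\sigma(s,X_{\beta,s})\big)dB_s\right).
\end{align*}
Taking $p$-th moments and applying \eqref{pt1}, the Burkh\"older--Davis--Gundy inequality to the stochastic integrals, and the H\"older trick \eqref{pt3} to the deterministic singular-kernel integrals, each of the six pieces is bounded. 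For the first pair I use Lemma \ref{l2} to get the factor $|\alpha-\beta|^p$, Assumption \ref{asum1} for the linear growth of $b,\sigma$, and the uniform moment bound \eqref{pt2} from Proposition \ref{pro1}; this contributes a term of order $t^{p\alpha-p/2}|\alpha-\beta|^p$ (up to constants and the $\int_0^t(t-s)^{2\alpha-2}ds$ factors). For the second pair I invoke Lemma \ref{lm3}, specifically \eqref{hh2} (and \eqref{hh2.1} after using the H\"older trick, since the BDG/H\"older manipulation produces $\ln^2(t-s)$ weights in the kernel), together again with Assumption \ref{asum1} and \eqref{pt2}; this contributes the dominant term of order $t^{p(\alpha\wedge\beta)-p/2}(|\ln t|^p+1)|\alpha-\beta|^p$.

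The third pair is where the Gronwall argument enters: by the Lipschitz property in Assumption \ref{asum1} it is controlled by $C\int_0^t(t-s)^{\beta-1}|X_{\alpha,s}-X_{\beta,s}|\,ds$ and, after the H\"older trick \eqref{pt3}, by $C\int_0^t(t-s)^{2\beta-2}E|X_{\alpha,s}-X_{\beta,s}|^p\,ds$. Setting $v(t):=E|X_{\alpha,t}-X_{\beta,t}|^p$ and collecting the first four pieces into $\omega(t):=C\,t^{p(\alpha\wedge\beta)-p/2}(|\ln t|^p+1)|\alpha-\beta|^p$, I arrive at an inequality of exactly the form required by Lemma \ref{ger} with $\eta=2\beta-1\in(2\alpha_0-1,1)$ and $a=C$. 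I then need $\omega$ to fit the hypotheses of that lemma. Since $t\mapsto t^{p(\alpha\wedge\beta)-p/2}(|\ln t|^p+1)$ need not be monotone near $0$ (the logarithm blows up), I would either use the general bound \eqref{dhk3} (noting $\omega^{2/\eta}$ is integrable because $p(\alpha\wedge\beta)-p/2>0$ for $\alpha,\beta>1/2$ and logarithmic factors are integrable), or bound $\omega$ above by a non-decreasing majorant of the same order on $[0,T]$ and apply \eqref{dhk}; either way the exponential prefactor is bounded by a constant depending only on $L,T,p,\alpha_0$, giving $v(t)\le C\,t^{p(\alpha\wedge\beta)-p/2}(|\ln t|^p+1)|\alpha-\beta|^p$, which is \eqref{eq2}.

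The main obstacle, and the step requiring the most care, is bookkeeping the exponents so that the final bound reads $t^{p(\alpha\wedge\beta)-p/2}$ rather than a worse power: the deterministic integrals $\int_0^t(t-s)^{2\alpha-2}ds=\frac{t^{2\alpha-1}}{2\alpha-1}$ and the H\"older trick \eqref{pt3} introduce extra powers of $t$ that must combine correctly with the $t^{2\alpha-1}$ or $t^{2\beta-1}$ coming out of Lemma \ref{lm3}, and one must track which terms carry $t^{2\alpha-1}$ versus $t^{2\beta-1}$ and correctly take the minimum. A secondary technical point is ensuring all constants are uniform in $\alpha,\beta\in[\alpha_0,1]$; this is exactly why Proposition \ref{pro1} is stated with a supremum over $\alpha$ and why Lemma \ref{ger} is formulated with explicit constants, so that each appeal to these results yields a constant depending only on $L,T,p,\alpha_0,x_0$. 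No single step is deep; the difficulty is purely in organizing the decomposition and the exponent arithmetic so the stated rate emerges cleanly.
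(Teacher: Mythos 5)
Your proposal is correct and follows essentially the same route as the paper: the same three-way telescoping into Gamma-factor, kernel, and process differences (only with the intermediate cross terms attached to $X_{\alpha}$ rather than $X_{\beta}$, which changes nothing), the same appeals to Lemmas \ref{l2} and \ref{lm3}, Proposition \ref{pro1}, the Burkh\"older--Davis--Gundy inequality and the H\"older trick \eqref{pt3}, and the same closing step via the singular Gronwall estimate \eqref{dhk3}. The only cosmetic differences are that your Gronwall kernel is $(t-s)^{2\beta-2}$ instead of the paper's $(t-s)^{2\alpha-2}$, and your mention of \eqref{hh2.1} is unnecessary here (no logarithmic weight survives the H\"older step); neither affects correctness.
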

\begin{proof}
We recall that, for $0\leq t\leq T,$
$$X_{\alpha,t} = x_{0} + \frac{1}{\Gamma(\alpha)} \left(\int_0^t (t-s)^{\alpha -1}b(s,X_{\alpha,s})ds +  \int_0^t (t-s)^{\alpha -1} \sigma(s,X_{\alpha,s})dB_s\right)$$
and
$$X_{\beta,t} = x_{0} + \frac{1}{\Gamma(\beta)} \left(\int_0^t (t-s)^{\beta -1}b(s,X_{\beta,s})ds +  \int_0^t (t-s)^{\beta -1} \sigma(s,X_{\beta,s})dB_s\right).$$
We therefore get
		\begin{align}
		X_{\alpha,t} -X_{\beta,t}&= \frac{1}{\Gamma(\alpha)}\int_0^t (t-s)^{\alpha -1}b(s,X_{\alpha,s})ds 	- \frac{1}{\Gamma(\beta)}\int_0^t (t-s)^{\beta -1}b(s,X_{\beta,s})ds\notag\\
		& + \frac{1}{\Gamma(\alpha)} \int_0^t (t-s)^{\alpha -1} \sigma(s,X_{\alpha,s})dB_s
		- \frac{1}{\Gamma(\beta)}\int_0^t (t-s)^{\beta -1} \sigma(s,X_{\beta,s})dB_s\notag\\
		& = I_1+I_2,\,\,0\leq t\leq T,\label{almv}
		\end{align}
		where
		\begin{align*}
		I_1:= \frac{1}{\Gamma(\alpha)}\int_0^t (t-s)^{\alpha -1}b(s,X_{\alpha,s})ds 	- \frac{1}{\Gamma(\beta)}\int_0^t (t-s)^{\beta -1}b(s,X_{\beta,s})ds,
		\end{align*}
		and \begin{align*}
		I_2:= \frac{1}{\Gamma(\alpha)} \int_0^t (t-s)^{\alpha -1} \sigma(s,X_{\alpha,s})dB_s
		- \frac{1}{\Gamma(\beta)}\int_0^t (t-s)^{\beta -1} \sigma(s,X_{\beta,s})dB_s.
		\end{align*}
To estimate $E|I_1|^p$ and $E|I_2|^p$ we use the following decompositions
		\begin{align*}
		I_1 & =  \frac{1}{\Gamma(\alpha)} \int_0^t (t-s)^{\alpha -1} \big(b(s,X_{\alpha,s}) - b(s,X_{\beta,s}) \big) ds\\
&+  \frac{1}{\Gamma(\alpha)}\int_0^t   ((t-s)^{\alpha -1} - (t-s)^{\beta -1}) b(s,X_{\beta,s})ds\\
		& + \bigg(\frac{1}{\Gamma(\alpha)} -\frac{1}{\Gamma(\beta)}\bigg)\int_0^t (t-s)^{\beta -1} b(s,X_{\beta,s})ds.	
		\end{align*}
		and
		\begin{align*}
		I_2 &= \frac{1}{\Gamma(\alpha)} \int_0^t (t-s)^{\alpha -1} \big(\sigma(s,X_{\alpha,s}) - \sigma(s,X_{\beta,s}) \big) dB_s\\
		&+ \frac{1}{\Gamma(\alpha)}\int_0^t   ((t-s)^{\alpha -1} - (t-s)^{\beta -1}) \sigma(s,X_{\beta,s})dB_s\\
		& + \bigg(\frac{1}{\Gamma(\alpha)} -\frac{1}{\Gamma(\beta)}\bigg)\int_0^t (t-s)^{\beta -1} \sigma(s,X_{\beta,s})dB_s.
		\end{align*}
		By the inequality \eqref{pt1}, the Cauchy-Schwarz inequality and Assumption \ref{asum1}, we deduce
		\begin{align*}
		E|I_1|^p &\le 3^{p-1}L^pt^{\frac{p}{2}} E\left(\int_0^t(t-s)^{2\alpha-2} |X_{\alpha,s} - X_{\beta,s}|^2ds \right)^{\frac{p}{2}}\\&
		+ 6^{p-1}L^pt^{\frac{p}{2}} E\left(\int_0^t  \left((t-s)^{\alpha -1} - (t-s)^{\beta -1}\right)^2(1+|X_{\beta,s}|)^2ds\right)^{\frac{p}{2}}\\
		& +  6^{p-1}L^pt^{\frac{p}{2}}\bigg|\frac{1}{\Gamma(\alpha)} -\frac{1}{\Gamma(\beta)}\bigg|^pE\left(\int_0^t (t-s)^{2\beta -2} (1+|X_{\beta,s}|^2)ds\right)^{\frac{p}{2}}.
\end{align*}
Then, by the inequality (\ref{pt3}), we get
\begin{align*}
	&	E|I_1|^p		 \le 3^{p-1}L^pt^{\frac{p}{2}}\left(\int_0^t(t-s)^{2\alpha-2}ds\right)^{\frac{p}{2}-1} \int_0^t(t-s)^{2\alpha-2} E|X_{\alpha,s} - X_{\beta,s}|^pds \\
		& + 6^{p-1}L^p(2t)^{\frac{p}{2}}\left(\int_0^t\left((t-s)^{\alpha -1} - (t-s)^{\beta -1}\right)^2ds\right)^{\frac{p}{2}-1}\\
&\times\int_0^t  \left((t-s)^{\alpha -1} - (t-s)^{\beta -1}\right)^2(1+E|X_{\beta,s}|^p)ds\\
		&+6^{p-1}L^pt^{\frac{p}{2}}\bigg|\frac{1}{\Gamma(\alpha)} -\frac{1}{\Gamma(\beta)}\bigg|^p\left(\int_0^t(t-s)^{2\beta-2}ds\right)^{\frac{p}{2}-1} \int_0^t(t-s)^{2\beta-2} (1+E|X_{\beta,s}|^p)ds.
\end{align*}
We now use the estimate (\ref{pt2})  to get
\begin{align*}
		E|I_1|^p&		 \le C \int_0^t(t-s)^{2\alpha-2} E|X_{\alpha,s} - X_{\beta,s}|^pds \\
		& + Ct^{\frac{p}{2}}\left(\int_0^t\left((t-s)^{\alpha -1} - (t-s)^{\beta -1}\right)^2ds\right)^{\frac{p}{2}}+Ct^{p\beta}\bigg|\frac{1}{\Gamma(\alpha)} -\frac{1}{\Gamma(\beta)}\bigg|^p,
				\end{align*}
where $C$ is a positive constant depending only on $L,T,p,x_0$ and $\alpha_0.$ Recalling Lemmas \ref{l2} and \ref{lm3}, we obtain
\begin{equation}\label{alm}
				E|I_1|^p
				\le  C(t^{p\alpha}+t^{p\beta})(|\ln t|^p+1)|\alpha-\beta|^p  + C \int_0^t(t-s)^{2\alpha-2} E|X_{\alpha,s}- X_{\beta,s}|^pds,
		\end{equation}
		where $C$ is a positive constant depending  on $L, T, p,$ $\alpha_0$ and $x_0$.
	
	For 	$E|I_2|^p$, 	using  the inequality \eqref{pt1}, Burkh\"older-Davis-Gundy inequality,   Assumption \ref{asum1}, we have
		\begin{align*}
		E|I_2|^p &\leq 3^{p-1}L^pc_pE\left(\int_0^t(t-s)^{2\alpha-2} |X_{\alpha,s} - X_{\beta,s}|^2ds \right)^{\frac{p}{2}}\\&
		+ 6^{p-1}L^pc_p E\left(\int_0^t  \left((t-s)^{\alpha -1} - (t-s)^{\beta -1}\right)^2(1+|X_{\beta,s}|)^2ds\right)^{\frac{p}{2}}\\
		& +  3^{p-1}L^pc_p\bigg|\frac{1}{\Gamma(\alpha)} -\frac{1}{\Gamma(\beta)}\bigg|^pE\left(\int_0^t (t-s)^{2\beta-2} (1+|X_{\beta,s}|)^2ds\right)^{\frac{p}{2}},
		\end{align*}
			where $c_p$ is the constant in the  Burkh\"older-Davis-Gundy inequality. Hence, by using the same arguments as in the proof of (\ref{alm}), we obtain
\begin{equation}\label{alma}
E|I_2|^p
\leq C(t^{p\alpha-p/2}+t^{p\beta-p/2})(|\ln t|^p+1)|\alpha-\beta|^p  + C \int_0^t(t-s)^{2\alpha-1} E|X_{\alpha,s}- X_{\beta,s}|^pds,
\end{equation}
	 where   $C$ is a positive constant depending  only on $L,T, p,$ $\alpha_0$ and $x_0$. Combining (\ref{almv}), (\ref{alm}) and (\ref{alma}) yields
		\begin{align}
		&E|X_{\alpha,t} -X_{\beta,t}|^p \le 2^{p-1}(E|I_1|^p+ E|I_2|^p)\notag\\
		& \le C t^{p(\alpha\wedge\beta)-\frac{p}{2}}(|\ln t|^p+1)|\alpha -\beta|^p  + C \int_0^t (t-s)^{2\alpha-2}E|X_{\alpha,s} - X_{\beta,s}|^p ds,\,\,0\leq t\leq T.\label{caik}
		\end{align}
As a consequence, applying the inequality (\ref{dhk3}) to $\eta=2\alpha-1$  gives us
\begin{align*}
&E|X_{\alpha,t} -X_{\beta,t}|^p\\
& \le C t^{p(\alpha\wedge\beta)-\frac{p}{2}}(|\ln t|^p+1)|\alpha -\beta|^p  + C|\alpha -\beta|^p\left(\int_0^t  \big(s^{p(\alpha\wedge\beta)-\frac{p}{2}}(|\ln s|^p+1)\big)^{\frac{2}{\eta}}ds\right)^{\frac{\eta}{2}}\\
&=C t^{p(\alpha\wedge\beta)-\frac{p}{2}}(|\ln t|^p+1)|\alpha -\beta|^p  +Ct^{p(\alpha\wedge\beta)-\frac{p}{2}}|\alpha -\beta|^p\left(\int_0^1 \big(s^{p(\alpha\wedge\beta)-\frac{p}{2}}(|\ln t+\ln s|^p+1)\big)^{\frac{2}{\eta}}ds\right)^{\frac{\eta}{2}}\\
&\leq C t^{p(\alpha\wedge\beta)-\frac{p}{2}}(|\ln t|^p+1)|\alpha -\beta|^p  +Ct^{p(\alpha\wedge\beta)-\frac{p}{2}}|\alpha -\beta|^p\left(\int_0^1 \big(s^{p(\alpha\wedge\beta)-\frac{p}{2}}(|\ln t|^p+|\ln s|^p+1)\big)^{\frac{2}{\eta}}ds\right)^{\frac{\eta}{2}}.
\end{align*}	
Furthermore, it is easy to check that the integral $\left(\int_0^1 \big(s^{p(\alpha\wedge\beta)-\frac{p}{2}}|\ln s|^p\big)^{\frac{2}{\eta}}ds\right)^{\frac{\eta}{2}}$ is bounded uniformly in $\alpha,\beta\in[\alpha_0,1].$ So we conclude that
$$
E|X_{\alpha,t} -X_{\beta,t}|^p \le  C t^{p(\alpha\wedge\beta)-\frac{p}{2}}(|\ln t|^p+1)|\alpha -\beta|^p,\,\,0\leq t\leq T.$$
The proof of the proposition is complete.
\end{proof}

The estimate \eqref{eq2} suggests that the limit $\lim\limits_{\alpha\to \beta}\frac{X_{\alpha,t} -X_{\beta,t}}{\alpha-\beta}$ exists. Note that a nonzero limit ensures the optimality of the estimate \eqref{eq2} for the strong convergence.  To end this, let us additionally assume that
\begin{assum}\label{asum2}
For each $t\in[0,T],$ the functions $b(t,.),\sigma(t,.)$ are differentiable on $\mathbb{R}$ and satisfy
$$ |b'(t,x)-b'(t,y)| +|\sigma'(t,x)-\sigma'(t,y)|\le L(1+|x|^\nu+|y|^\nu)|x-y|^\delta\,\forall x,y\in \mathbb{R},t\in[0,T]$$
for some $L,\nu>0$ and $\delta\in (0,1].$
\end{assum}
In the next theorem, we are able to find the exact value for the limit of $\frac{X_{\alpha,t} -X_{\beta,t}}{\alpha-\beta}$ as $\alpha\to \beta$ and the distance from $\frac{X_{\alpha,t} -X_{\beta,t}}{\alpha-\beta}$ to its limit $Y_{\beta,t}.$
\begin{theorem}\label{dl32m}
	Suppose Assumptions \ref{asum1} and \ref{asum2}. Let $(X_{\alpha,t})_{t\in [0,T]}$ and $(X_{\beta,t})_{t\in [0,T]}$ be the solutions to the equations (\ref{eq1}) and  (\ref{eq1qc}), respectively. Then, for  every $p\geq 2$ and $\alpha_0\in (\frac{1}{2}, 1],$ we have
	\begin{align}\label{eq2m}
 \sup\limits_{0\le t\le T}E\left|\frac{X_{\alpha,t} -X_{\beta,t}}{\alpha-\beta}-Y_{\beta,t}\right|^p \le C|\alpha-\beta|^{p\delta}\,\,\forall\,\alpha,\beta\in [\alpha_0,1],
	\end{align}
	where $C$ is a positive constant depending only on $L, T, p,$ $\alpha_0$, $x_0,\nu,\delta$ and $(Y_{\beta,t})_{0\leq t\leq T}$ is defined by
		\begin{align}\label{eq1m}
	Y_{\beta,t} &= -\frac{\Gamma'(\beta)}{\Gamma^2(\beta)} \left(\int_0^t (t-s)^{\beta -1}b(s,X_{\beta,s})ds +  \int_0^t (t-s)^{\beta -1} \sigma(s,X_{\beta,s})dB_s\right)\notag \\&
\ \ +	\frac{1}{\Gamma(\beta)} \int_0^t (t-s)^{\beta -1}\left(\ln(t-s)b(s,X_{\beta,s})+b'(s,X_{\beta,s})Y_{\beta,t}\right)ds\notag\\&
		 \ \ +\frac{1}{\Gamma(\beta)} \int_0^t (t-s)^{\beta -1}\left(\ln(t-s)\sigma(s,X_{\beta,s})+\sigma'(s,X_{\beta,s})Y_{\beta,t}\right)dB_s,\,\,0\leq t\leq T.
	\end{align}
	\end{theorem}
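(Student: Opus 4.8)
The plan is to identify a linear stochastic Volterra equation with singular kernel satisfied by the error process
$$Z^{\alpha,\beta}_t:=\frac{X_{\alpha,t}-X_{\beta,t}}{\alpha-\beta}-Y_{\beta,t},$$
to show that its forcing term is of order $|\alpha-\beta|^{\delta}$ in $L^p$, and then to close the estimate with the Gronwall-type Lemma \ref{ger}. As a preliminary one records that \eqref{eq1m} is well posed: since $b(t,\cdot),\sigma(t,\cdot)$ are $L$-Lipschitz and differentiable, $|b'|,|\sigma'|\le L$, so \eqref{eq1m} is a fractional stochastic equation with bounded $\mathbb F$-adapted coefficients and inhomogeneous part assembled from $X_\beta$; the Picard/Gronwall scheme behind Proposition \ref{pro1}, together with the uniform moment bound \eqref{pt2}, gives existence, uniqueness and $\sup_{\beta\in[\alpha_0,1]}\sup_{0\le t\le T}E|Y_{\beta,t}|^p<\infty$.

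Subtracting \eqref{eq1m} from $(\alpha-\beta)^{-1}$ times the difference of \eqref{eq1} and \eqref{eq1qc}, and using $b(s,X_{\alpha,s})-b(s,X_{\beta,s})=b'(s,X_{\beta,s})(X_{\alpha,s}-X_{\beta,s})+r^b_s$ (and likewise for $\sigma$, with remainder $r^\sigma_s$), the terms regroup as
\begin{align*}
Z^{\alpha,\beta}_t&=\mathcal R^{\alpha,\beta}_t
+\frac{1}{\Gamma(\alpha)}\int_0^t(t-s)^{\alpha-1}b'(s,X_{\beta,s})Z^{\alpha,\beta}_s\,ds\\
&\quad+\frac{1}{\Gamma(\alpha)}\int_0^t(t-s)^{\alpha-1}\sigma'(s,X_{\beta,s})Z^{\alpha,\beta}_s\,dB_s,
\end{align*}
where $\mathcal R^{\alpha,\beta}_t$ collects: (a) the factor $\frac{1}{\alpha-\beta}\big(\frac{1}{\Gamma(\alpha)}-\frac{1}{\Gamma(\beta)}\big)+\frac{\Gamma'(\beta)}{\Gamma(\beta)^2}$ times the integrals of $b(s,X_{\beta,s}),\sigma(s,X_{\beta,s})$ against $(t-s)^{\beta-1}$; (b) $\frac{1}{\Gamma(\alpha)}$ times the integrals of $b(s,X_{\beta,s}),\sigma(s,X_{\beta,s})$ against $\frac{(t-s)^{\alpha-1}-(t-s)^{\beta-1}}{\alpha-\beta}-(t-s)^{\beta-1}\ln(t-s)$; (c) the cross terms carrying a factor $\frac{1}{\Gamma(\alpha)}-\frac{1}{\Gamma(\beta)}$, together with the integrals of $b'(s,X_{\beta,s})Y_{\beta,s}$ and $\sigma'(s,X_{\beta,s})Y_{\beta,s}$ against $\frac{1}{\Gamma(\alpha)}(t-s)^{\alpha-1}-\frac{1}{\Gamma(\beta)}(t-s)^{\beta-1}$; and (d) the nonlinear Taylor remainders $\frac{1}{\Gamma(\alpha)(\alpha-\beta)}\int_0^t(t-s)^{\alpha-1}r^b_s\,ds$ and $\frac{1}{\Gamma(\alpha)(\alpha-\beta)}\int_0^t(t-s)^{\alpha-1}r^\sigma_s\,dB_s$.

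The pieces (a)--(c) are routine, each $O(|\alpha-\beta|)$ in $L^p$: (a) follows from a second-order Taylor expansion of $1/\Gamma(\cdot)$ with Lemma \ref{l1}(i) and \eqref{pt2}; (b) follows because a second-order Taylor expansion in the exponent gives $\frac{\alpha-\beta}{2}(t-s)^{\gamma-1}\ln^2(t-s)$ with $\gamma$ between $\alpha$ and $\beta$, which one controls with a Lemma \ref{lm3}-type integration, \eqref{pt3}, Burkh\"older--Davis--Gundy and \eqref{pt2}; (c) uses the same ingredients together with the moment bound on $Y_\beta$. The decisive step, which I expect to be the main obstacle, is (d): by the mean value theorem and Assumption \ref{asum2},
$$|r^b_s|\le C\,(1+|X_{\alpha,s}|^\nu+|X_{\beta,s}|^\nu)\,|X_{\alpha,s}-X_{\beta,s}|^{1+\delta},$$
so $\frac{|r^b_s|}{|\alpha-\beta|}\le C\,(1+|X_{\alpha,s}|^\nu+|X_{\beta,s}|^\nu)\,|X_{\alpha,s}-X_{\beta,s}|^{\delta}\cdot\frac{|X_{\alpha,s}-X_{\beta,s}|}{|\alpha-\beta|}$; applying \eqref{pt3} in $s$, then H\"older in $\omega$ with Proposition \ref{pro1} (to bound all moments of $X_\alpha,X_\beta$ uniformly) and Proposition \ref{dl32} (which yields $E|X_{\alpha,s}-X_{\beta,s}|^q\le C|\alpha-\beta|^q$ uniformly in $s\le T$ for every $q$), and doing the same for $r^\sigma_s$ via Burkh\"older--Davis--Gundy, one gets $E|\mathcal R^{\alpha,\beta}_t|^p\le C|\alpha-\beta|^{p\delta}$ uniformly in $t\in[0,T]$ and $\alpha,\beta\in[\alpha_0,1]$. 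This is precisely the mechanism turning the rate $1$ into $\delta$. Finally, taking $E|\cdot|^p$ in the equation for $Z^{\alpha,\beta}_t$ and using \eqref{pt1}, Burkh\"older--Davis--Gundy, the H\"older trick \eqref{pt3}, the bounds $|b'|,|\sigma'|\le L$ and $\frac{1}{\Gamma(\alpha)}\le 1$, one is led to
$$E|Z^{\alpha,\beta}_t|^p\le C|\alpha-\beta|^{p\delta}+C\int_0^t(t-s)^{2\alpha-2}E|Z^{\alpha,\beta}_s|^p\,ds,\qquad 0\le t\le T;$$
since the forcing term does not depend on $t$, estimate \eqref{dhk} of Lemma \ref{ger} with $\eta=2\alpha-1\ge 2\alpha_0-1$ gives $E|Z^{\alpha,\beta}_t|^p\le C|\alpha-\beta|^{p\delta}$ for all $t\in[0,T]$ with $C$ depending only on $L,T,p,\alpha_0,x_0,\nu,\delta$, and taking the supremum over $t$ yields \eqref{eq2m}.
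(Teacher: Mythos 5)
Your proposal is correct and follows essentially the same route as the paper: Taylor-expand $1/\Gamma$, the kernel exponent, and the coefficients to first order, show the resulting forcing is $O(|\alpha-\beta|^{\delta})$ in $L^p$ (with the nonlinear remainders of Assumption \ref{asum2} combined with Propositions \ref{pro1} and \ref{dl32} being the step that produces the exponent $\delta$), and close with the singular Gronwall inequality \eqref{dhk}. The only cosmetic difference is that you keep the homogeneous linear part with kernel $(t-s)^{\alpha-1}/\Gamma(\alpha)$ and therefore need the moment bound on $Y_{\beta}$ (which you supply), whereas the paper groups the linear part around $(t-s)^{\beta-1}/\Gamma(\beta)$ so that the $Y_{\beta}$ terms cancel exactly.
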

\begin{proof}
For $\alpha\neq \beta,$ we obtain from the relation (\ref{almv}) that
	\begin{align*}
	&\frac{X_{\alpha,t} -X_{\beta,t}}{\alpha-\beta}
= \frac{1}{\alpha-\beta}\left(\frac{1}{\Gamma(\alpha)}\int_0^t (t-s)^{\alpha -1}b(s,X_{\alpha,s})ds - \frac{1}{\Gamma(\beta)}\int_0^t (t-s)^{\beta -1}b(s,X_{\beta,s})ds\right)\\
&	 +   \frac{1}{\alpha-\beta}\left(\frac{1}{\Gamma(\alpha)}\int_0^t (t-s)^{\alpha -1}\sigma(s,X_{\alpha,s})dB_s- \frac{1}{\Gamma(\beta)}\int_0^t (t-s)^{\beta -1}\sigma(s,X_{\beta,s})dB_s\right),\,0\leq t\leq T.
	\end{align*}
We decompose the addends in the right hand side as follows
		\begin{align}
	&	\frac{1}{\alpha-\beta}\left(\frac{1}{\Gamma(\alpha)}\int_0^t (t-s)^{\alpha -1}b(s,X_{\alpha,s})ds-\frac{1}{\Gamma(\beta)}\int_0^t (t-s)^{\beta -1}b(s,X_{\beta,s})ds\right)\notag\\
	&= \frac{1}{\alpha-\beta}\left(\frac{1}{\Gamma(\alpha)}-\frac{1}{\Gamma(\beta)}\right)\int_0^t (t-s)^{\alpha -1}b(s,X_{\alpha,s})ds 	+ \frac{1}{\Gamma(\beta)}\int_0^t \frac{(t-s)^{\alpha -1}-(t-s)^{\beta -1}}{\alpha-\beta}b(s,X_{\beta,s})ds\notag\\
	& \ \  +  \frac{1}{\Gamma(\beta)}\int_0^t\left((t-s)^{\alpha -1}-(t-s)^{\beta-1}\right) \frac{b(s,X_{\alpha,s})-b(s,X_{\beta,s})}{\alpha-\beta}ds\notag\\& \ \ +  \frac{1}{\Gamma(\beta)}\int_0^t(t-s)^{\beta -1} \frac{b(s,X_{\alpha,s})-b(s,X_{\beta,s})}{\alpha-\beta}ds,\label{9m1}
	\end{align}
	and
		\begin{align}
	&	\frac{1}{\alpha-\beta}\left(\frac{1}{\Gamma(\alpha)}\int_0^t (t-s)^{\alpha -1}\sigma(s,X_{\alpha,s})dB_s- \frac{1}{\Gamma(\beta)}\int_0^t (t-s)^{\beta -1}\sigma(s,X_{\beta,s})dB_s\right)\notag\\&=  \frac{1}{\alpha-\beta}\left(\frac{1}{\Gamma(\alpha)}-\frac{1}{\Gamma(\beta)}\right)\int_0^t (t-s)^{\alpha -1}\sigma(s,X_{\alpha,s})dB_s \notag\\&	+ \frac{1}{\Gamma(\beta)}\int_0^t \frac{(t-s)^{\alpha -1}-(t-s)^{\beta -1}}{\alpha-\beta}\sigma(s,X_{\beta,s})dB_s\notag\\
	& \ \  +  \frac{1}{\Gamma(\beta)}\int_0^t\left((t-s)^{\alpha -1}-(t-s)^{\beta-1}\right) \frac{\sigma(s,X_{\alpha,s})-\sigma(s,X_{\beta,s})}{\alpha-\beta}dB_s\notag\\& \ \ +  \frac{1}{\Gamma(\beta)}\int_0^t(t-s)^{\beta -1} \frac{\sigma(s,X_{\alpha,s})-\sigma(s,X_{\beta,s})}{\alpha-\beta}dB_s.\label{9m2}
	\end{align}
By Taylor's expansion, we have
\begin{equation}\label{9i1}
\frac{1}{\Gamma(\alpha)}-\frac{1}{\Gamma(\beta)}
=-(\alpha-\beta)\frac{\Gamma'(\beta)}{\Gamma^2(\beta)}+(\alpha-\beta)R_1(\alpha,\beta),
\end{equation}
where the remainder $R_1(\alpha,\beta)$ is given by
$$R_1(\alpha,\beta):=\frac{1}{2}\left(-\frac{\Gamma''(\beta+\theta(\alpha-\beta))}{\Gamma^2(\beta+\theta(\alpha-\beta))}
+\frac{\Gamma'(\beta+\theta(\alpha-\beta))^2}{\Gamma^3(\beta+\theta(\alpha-\beta))}\right)(\alpha-\beta)$$
for some $\theta\in(0,1).$ Furthermore, by the part $(i)$ of Lemma \ref{l1} and the fact $\Gamma(\beta+\theta(\alpha-\beta))\geq 1,$ this remainder satisfies
\begin{equation}\label{pt1.1}
	|R_1(\alpha,\beta)|\leq C|\alpha-\beta|,
	\end{equation}
where $C>0$ is an absolute constant.

For $0\leq s\leq t\leq T,$ we have
		\begin{align}
(t-s)^{\alpha -1}-(t-s)^{\beta -1}&=(\alpha-\beta)\int_0^1 (t-s)^{\beta-\theta(\alpha-\beta)}\ln(t-s)d\theta\notag\\
&=(\alpha-\beta)(t-s)^{\beta}\ln(t-s)+(\alpha-\beta)R_2(\alpha,\beta,t,s),\label{9i2}
	\end{align}
		where the remainder term $R_2(\alpha,\beta,t,s)$ is defined by
	$$R_2(\alpha,\beta,t,s):=\int_0^1\left( (t-s)^{\beta-\theta(\alpha-\beta)}-(t-s)^{\beta}\right)\ln(t-s)d\theta.$$
By using the same arguments as in the proof of (\ref{hh2.1}), this remainder satisfies
		\begin{align}
	\int_0^t|R_2(\alpha,\beta,t,s)|^2ds&\leq \int_0^t\int_0^1\left( (t-s)^{\beta-\theta(\alpha-\beta)}-(t-s)^{\beta}\right)^2\ln^2(t-s)d\theta ds\notag \\&
	=\int_0^1\left(\int_0^t\left( (t-s)^{\beta-\theta(\alpha-\beta)}-(t-s)^{\beta}\right)^2\ln^2(t-s)ds\right)d\theta\notag\\&
	\leq C(\alpha-\beta)^2,\label{pt1.2}
	\end{align}
where $C$ is a positive constant depending only on $T$ and $\alpha_0.$

For every $s\in [0,T],$ we have
		\begin{align}
b(s,X_{\alpha,s})-b(s,X_{\beta,s})&=(X_{\alpha,s}-X_{\beta,s})\int_0^1 b'(s,X_{\beta,s}+\theta(X_{\alpha,s}-X_{\beta,s}))d\theta\notag\\
&=b'(s,X_{\beta,s})(X_{\alpha,s}-X_{\beta,s})+R_b(\alpha,\beta, t,s),\label{9i3}
	\end{align}
	where the term $R_b(\alpha,\beta, t,s)$ is defined by
	$$R_b(\alpha,\beta, t,s):=(X_{\alpha,s}-X_{\beta,s})\int_0^1\left( b'(s,X_{\beta,s}+\theta(X_{\alpha,s}-X_{\beta,s}))-b'(s,X_{\beta,s})\right)d\theta$$
By Assumption \ref{asum2} we have
	\begin{align*}
	|R_b(\alpha,\beta, t,s)|&=|X_{\alpha,s}-X_{\beta,s}|\int_0^1L(1+|X_{\beta,s}|^\nu+|\theta(X_{\alpha,s}-X_{\beta,s})|^\nu)|\theta(X_{\alpha,s}-X_{\beta,s})|^{\delta} d\theta\\&\leq C(1+|X_{\alpha,s}|^\nu+|X_{\beta,s}|^\nu)|X_{\alpha,s}-X_{\beta,s}|^{1+\delta}, \ \ 0\leq s\leq t\leq T.
	\end{align*}
where $C$ is a  positive constant depending only on $L$ and $\nu.$ Hence, by Cauchy-Schwarz inequality and the  inequality \eqref{pt1}, we get
 	\begin{align*}
 E|R_b(\alpha,\beta, t,s)|^p&=\sqrt{3^{p-1}C(1+E|X_{\alpha,s}|^{2p\nu}+E|X_{\beta,s}|^{2p\nu})E|X_{\alpha,s}-X_{\beta,s}|^{2p(1+\delta)}}.
 \end{align*}
 This, together with the estimates obtained in and Propositions \ref{pro1} and \ref{dl32}, yields
 \begin{equation}\label{pt1.3}	
 E|R_b(\alpha,\beta, t,s)|^p\leq C|\alpha-\beta|^{p(1+\delta)}.
 \end{equation}
 Similarly, we have
\begin{equation}\label{9i4}
 \sigma(s,X_{\alpha,s})-\sigma(s,X_{\beta,s})
 =\sigma'(s,X_{\beta,s})(X_{\alpha,s}-X_{\beta,s})+R_\sigma(\alpha,\beta, t,s),
\end{equation}
 where the remainder term $R_\sigma(\alpha,\beta, t,s)$ satisfies
 \begin{equation}\label{pt1.4}	
 E|R_\sigma(\alpha,\beta, t,s)|^p\leq C|\alpha-\beta|^{p(1+\delta)}.
 \end{equation}
 We insert (\ref{9i1}), (\ref{9i2}), (\ref{9i3}) and (\ref{9i4}) into the decompositions (\ref{9m1}) and (\ref{9m2}) to get
 	\begin{align*}
	\frac{X_{\alpha,t} -X_{\beta,t}}{\alpha-\beta}&= \left(-\frac{\Gamma'(\beta)}{\Gamma^2(\beta)}+R_1(\alpha,\beta)\right)\int_0^t (t-s)^{\alpha -1}b(s,X_{\alpha,s})ds \\&	+ \frac{1}{\Gamma(\beta)}\int_0^t \left((t-s)^{\beta}\ln(t-s)+R_2(\alpha,\beta,t,s)\right)b(s,X_{\beta,s})ds\\
 &  +  \frac{1}{\Gamma(\beta)}\int_0^t\left((t-s)^{\alpha -1}-(t-s)^{\beta-1}\right) \frac{b(s,X_{\alpha,s})-b(s,X_{\beta,s})}{\alpha-\beta}ds\\
 & +  \frac{1}{\Gamma(\beta)}\int_0^t(t-s)^{\beta -1} \left(b'(s,X_{\beta,s})\frac{X_{\alpha,s}-X_{\beta,s}}{\alpha-\beta}+\frac{R_b(\alpha,\beta, t,s)}{\alpha-\beta}\right)ds\\& + \left(-\frac{\Gamma'(\beta)}{\Gamma^2(\beta)}+R_1(\alpha,\beta)\right)\int_0^t (t-s)^{\alpha -1}\sigma(s,X_{\alpha,s})dB_s \\
 & 	+ \frac{1}{\Gamma(\beta)}\int_0^t \left((t-s)^{\beta}\ln(t-s)+R_2(\alpha,\beta,t,s)\right)\sigma(s,X_{\beta,s})dB_s\\
 &  +  \frac{1}{\Gamma(\beta)}\int_0^t\left((t-s)^{\alpha -1}-(t-s)^{\beta-1}\right) \frac{\sigma(s,X_{\alpha,s})-\sigma(s,X_{\beta,s})}{\alpha-\beta}dB_s\\
 & +  \frac{1}{\Gamma(\beta)}\int_0^t(t-s)^{\beta -1} \left(\sigma'(s,X_{\beta,s})\frac{X_{\alpha,s}-X_{\beta,s}}{\alpha-\beta}+\frac{R_\sigma(\alpha,\beta, t,s)}{\alpha-\beta}\right)dB_s,\,\,0\leq t\leq T.
 \end{align*}
 We recall that
 \begin{align}
	Y_{\beta,t} &= -\frac{\Gamma'(\beta)}{\Gamma^2(\beta)} \left(\int_0^t (t-s)^{\beta -1}b(s,X_{\beta,s})ds +  \int_0^t (t-s)^{\beta -1} \sigma(s,X_{\beta,s})dB_s\right)\notag \\&
\ \ +	\frac{1}{\Gamma(\beta)} \int_0^t (t-s)^{\beta -1}\left(\ln(t-s)b(s,X_{\beta,s})+b'(s,X_{\beta,s})Y_{\beta,t}\right)ds\notag\\&
		 \ \ +\frac{1}{\Gamma(\beta)} \int_0^t (t-s)^{\beta -1}\left(\ln(t-s)\sigma(s,X_{\beta,s})+\sigma'(s,X_{\beta,s})Y_{\beta,t}\right)dB_s,\,\,0\leq t\leq T.\notag
	\end{align}
 Then, for $Z_{\alpha,\beta,t}:=\frac{X_{\alpha,t} -X_{\beta,t}}{\alpha-\beta}-Y_{\beta,t},$ we obtain
 \begin{align}
 Z_{\alpha,\beta,t}&=\frac{1}{\Gamma(\beta)}\left(\int_0^t(t-s)^{\beta -1}b'(s,X_{\beta,s}) Z_{\alpha,\beta,s}ds+\int_0^t(t-s)^{\beta -1}\sigma'(s,X_{\beta,s}) Z_{\alpha,\beta,s}dB_s\right)\notag\\
 &+R(\alpha,\beta,t)+\bar{R}(\alpha,\beta,t),\,\,0\leq t\leq T,\label{skm}
 \end{align}
 where the terms $R(\alpha,\beta,t)$ and $\bar{R}(\alpha,\beta,t)$ are defined by
\begin{multline}
R(\alpha,\beta,t):=R_1(\alpha,\beta)\left(\int_0^t (t-s)^{\alpha -1}b(s,X_{\alpha,s})ds+\int_0^t (t-s)^{\alpha -1}\sigma(s,X_{\alpha,s})dB_s\right)\\
+\frac{1}{\Gamma(\beta)}\int_0^t R_2(\alpha,\beta,t,s)b(s,X_{\beta,s})ds+\frac{1}{\Gamma(\beta)}\int_0^t R_2(\alpha,\beta,t,s)\sigma(s,X_{\beta,s})dB_s\\
+\frac{1}{\Gamma(\beta)}\int_0^t(t-s)^{\beta -1} \frac{R_b(\alpha,\beta, t,s)}{\alpha-\beta}ds+\frac{1}{\Gamma(\beta)}\int_0^t(t-s)^{\beta -1} \frac{R_\sigma(\alpha,\beta, t,s)}{\alpha-\beta}dB_s
\end{multline}
and
\begin{multline}
\bar{R}(\alpha,\beta,t)
:=-\frac{\Gamma'(\beta)}{\Gamma^2(\beta)}\int_0^t ((t-s)^{\alpha -1}b(s,X_{\alpha,s})+(t-s)^{\alpha -1}\sigma(s,X_{\alpha,s}))ds\\
+\frac{\Gamma'(\beta)}{\Gamma^2(\beta)}\int_0^t ((t-s)^{\beta -1}b(s,X_{\beta,s})+(t-s)^{\beta -1}\sigma(s,X_{\beta,s}))dB_s\\
+  \frac{1}{\Gamma(\beta)}\int_0^t\left((t-s)^{\alpha -1}-(t-s)^{\beta-1}\right) \frac{b(s,X_{\alpha,s})-b(s,X_{\beta,s})}{\alpha-\beta}ds\\
+\frac{1}{\Gamma(\beta)}\int_0^t\left((t-s)^{\alpha -1}-(t-s)^{\beta-1}\right) \frac{\sigma(s,X_{\alpha,s})-\sigma(s,X_{\beta,s})}{\alpha-\beta}dB_s.
\end{multline}
For every $p\geq 2,$ by using the Cauchy-Schwarz and Burkh\"older-Davis-Gundy inequalities and the fact $\Gamma(\beta)\geq 1,$ we obtain
\begin{multline*}
E|R(\alpha,\beta,t)|^p\leq C|R_1(\alpha,\beta)|^pE|\Gamma(\alpha)(X_{\alpha,t}-x_0)|^p\\
+Ct^{\frac{p}{2}}E\left(\int_0^t |R_2(\alpha,\beta,t,s)b(s,X_{\beta,s})|^2ds\right)^{\frac{p}{2}}+CE\left(\int_0^t |R_2(\alpha,\beta,t,s)\sigma(s,X_{\beta,s})|^2ds\right)^{\frac{p}{2}}\\
+Ct^{\frac{p}{2}}E\left(\int_0^t(t-s)^{2\beta -2} \frac{|R_b(\alpha,\beta, t,s)|^2}{(\alpha-\beta)^2}ds\right)^{\frac{p}{2}}+CE\left(\int_0^t(t-s)^{2\beta -2} \frac{|R_\sigma(\alpha,\beta, t,s)|^2}{(\alpha-\beta)^2}ds\right)^{\frac{p}{2}},
\end{multline*}
where $C$ is a positive constant depending only on $p.$ Then, by the inequality (\ref{pt3}), we deduce
\begin{multline*}
E|R(\alpha,\beta,t)|^p\leq C|R_1(\alpha,\beta)|^pE|\Gamma(1/2)(X_{\alpha,t}-x_0)|^p\\
+C\left(\int_0^t |R_2(\alpha,\beta,t,s)|^2ds\right)^{\frac{p}{2}-1}\int_0^t |R_2(\alpha,\beta,t,s)|^2\left(E|b(s,X_{\beta,s})|^{p}+E|\sigma(s,X_{\beta,s})|^{p}\right)ds\\
+CE\left(\int_0^t(t-s)^{2\beta -2}ds\right)^{\frac{p}{2}-1}\int_0^t(t-s)^{2\beta -2} \frac{E|R_b(\alpha,\beta, t,s)|^p+E|R_\sigma(\alpha,\beta, t,s)|^p}{|\alpha-\beta|^p}ds,
\end{multline*}
where $C$ is a positive constant depending only on $T$ and $p.$ Consequently, by using the estimate (\ref{pt2}), the linear growth property of $b$ and $\sigma$ and the estimates (\ref{pt1.1}), (\ref{pt1.2}), (\ref{pt1.3}) and (\ref{pt1.4}), we get
\begin{equation}
E|R(\alpha,\beta,t)|^p\leq C|\alpha-\beta|^{p\delta},
\end{equation}
where $C$ is a positive constant depending only on $L, T, p,$ $\alpha_0$, $x_0,\nu,\delta.$

Similarly, we have the following estimates for the term $\bar{R}(\alpha,\beta,t)$
\begin{multline*}
E|\bar{R}(\alpha,\beta,t)|^p\leq CE|\Gamma(\alpha)(X_{\alpha,t}-x_0)-\Gamma(\beta)(X_{\beta,t}-x_0)|^p\\
+  Ct^{\frac{p}{2}}E\left(\int_0^t\left((t-s)^{\alpha -1}-(t-s)^{\beta-1}\right)^2 \frac{|b(s,X_{\alpha,s})-b(s,X_{\beta,s})|^2}{(\alpha-\beta)^2}ds\right)^{\frac{p}{2}}\\
+CE\left(\int_0^t\left((t-s)^{\alpha -1}-(t-s)^{\beta-1}\right)^2 \frac{|\sigma(s,X_{\alpha,s})-\sigma(s,X_{\beta,s})|^2}{(\alpha-\beta)^2}ds\right)^{\frac{p}{2}},
\end{multline*}
and hence,
\begin{multline*}
E|\bar{R}(\alpha,\beta,t)|^p\leq C\Gamma^p(\alpha)\Gamma^p(\beta)\left|\frac{1}{\Gamma(\alpha)}-\frac{1}{\Gamma(\alpha)}\right|^pE|X_{\alpha,t}-x_0|^p+\Gamma^p(\beta)E|X_{\alpha,t}-X_{\beta,t}|^p\\
+  CE\left(\int_0^t\left((t-s)^{\alpha -1}-(t-s)^{\beta-1}\right)^2ds\right)^{\frac{p}{2}-1}\\
\times \int_0^t\left((t-s)^{\alpha -1}-(t-s)^{\beta-1}\right)^2 \frac{E|b(s,X_{\alpha,s})-b(s,X_{\beta,s})|^p+E|\sigma(s,X_{\alpha,s})-\sigma(s,X_{\beta,s})|^p}{|\alpha-\beta|^p}ds.
\end{multline*}
Note that $\Gamma(\alpha),\Gamma(\beta)\leq \Gamma(1/2).$ So it follows from Lipschitz property of $b$ and $\sigma$ and the estimates (\ref{tfg}), (\ref{hh2}), (\ref{eq2}) and (\ref{pt2}) that
\begin{equation}
E|\bar{R}(\alpha,\beta,t)|^p\leq C|\alpha-\beta|^p\leq C|\alpha-\beta|^{p\delta},
\end{equation}
where $C$ is a positive constant depending only on $L, T, p,$ $\alpha_0$, $x_0.$

To finish the proof, we use the same arguments as in the proof of Proposition \ref{pro1} and we obtain from (\ref{skm}) that, for $p\geq 2,$
\begin{align*}
E|Z_{\alpha,\beta,t}|^p&
\leq C\int_0^t(t-s)^{2\beta-2}E|Z_{\alpha,\beta,s}|^pds+CE|R(\alpha,\beta,t)|^p+E|\bar{R}(\alpha,\beta,t)|^p\\
&\leq C|\alpha-\beta|^{p\delta}+C\int_0^t(t-s)^{2\beta-2}E|Z_{\alpha,\beta,s}|^pds,\,\,0\leq t\leq T,
\end{align*}
where $C$ is a positive constant depending only on $L, T, p,\alpha_0, x_0,\nu$ and $\delta.$ Then, by applying the inequality (\ref{dhk}) to $\eta=2\beta-1,$ we conclude that
$$E|Z_{\alpha,\beta,t}|^p\leq C|\alpha-\beta|^{p\delta},\  \ \ 0 \le t\le T.$$
This completes the proof of the theorem because $Z_{\alpha,\beta,t}=\frac{X_{\alpha,t} -X_{\beta,t}}{\alpha-\beta}-Y_{\beta,t}.$
\end{proof}
\section{Weak convergence}\label{kjb}
In this section, our aim is to bound the quantity $|Eg(X_{\alpha,t}) -Eg(X_{\beta,t})|$ when $g$ is a bounded and measurable function. We are going to impose the following assumption.
\begin{assum}\label{asum3}For each $t\in[0,T],$ the coefficients $b(t,.),\sigma(t,.)$ are twice differentiable functions and the derivatives are bounded uniformly in $t$ by some constant $L>0$.
\end{assum}
Note that we can use the results established in \cite{Avikainen} and (\ref{eq2}) to derive the following bound
$$|Eg(X_{\alpha,t}) -Eg(X_{\beta,t})|\leq C|\alpha-\beta|^{1-\varepsilon}\,\,\forall\,\alpha,\beta\in (\frac{1}{2}, 1],$$
where $\varepsilon $ is some positive constant. However, this estimate is sub-optimal when $\alpha\to\beta.$ In the next theorem, we provide an optimal bound for $|Eg(X_{\alpha,t}) -Eg(X_{\beta,t})|.$ The price to pay is that we require $\alpha \vee \beta\in [\frac{7}{8},1].$
\begin{theorem}\label{dlc}Suppose Assumptions \ref{asum1} and \ref{asum3}. Let $(X_{\alpha,t})_{t\in [0,T]}$ and $(X_{\beta,t})_{t\in [0,T]}$ be solution to the equations (\ref{eq1}) and  (\ref{eq1qc}), respectively. We assume additionally that
	$$ |\sigma(t,x)|\ge \sigma_0 > 0\mbox{  }\forall\, t\in[0,T], x\in \mathbb{R}. $$ Let $g$ be a measurable function with $\|g\|_{\infty} <\infty$  and $\alpha_0\in (\frac{1}{2}, 1].$ Then, for any $\alpha\in [\alpha_0,1]$ and $\beta\in [\frac{7}{8},1],$ we have
		\begin{align}\label{pt2.1}
		|Eg(X_{\alpha,t}) -Eg(X_{\beta,t})| \le \|g\|_{\infty}t^{\alpha\wedge\beta-\beta}(|\ln t|+1)|\alpha -\beta|\,\,\forall\,t\in (0,T],
		\end{align}
			where $C$ is a positive constant depending only on $L, T, p,$ $\alpha_0$ and $x_0.$ Furthermore, when $g$ is continuous, it holds that
		\begin{equation}\label{pt2.2}\lim\limits_{\alpha\to \beta}\frac{Eg(X_{\alpha,t}) -Eg(X_{\beta,t})}{\alpha-\beta}=E\left[g(X_{\beta,t})\delta\left(\frac{Y_{\beta,t}DX_{\beta,t}}{\|DX_{\beta,t}\|_{L^2[0,T]}}\right)\right],
		\end{equation}
		where $(Y_{\beta,t})_{t\in [0,T]}$ is defined by \eqref{eq1m}.
	\end{theorem}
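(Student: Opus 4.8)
The plan is to apply the general Malliavin-calculus estimate of Lemma \ref{dltq} with $F_1 = X_{\beta,t}$ and $F_2 = X_{\alpha,t}$, so that $g$ needs only to be measurable and bounded. The first step is to verify the hypotheses: one must check that $X_{\beta,t}\in\mathbb D^{2,4}$ and $X_{\alpha,t}-X_{\beta,t}\in\mathbb D^{1,2}$, which follows from Assumption \ref{asum3} (twice differentiable, bounded derivatives) by differentiating the integral equation \eqref{eq1} in the Malliavin sense and running a Gronwall-type argument as in the proof of Proposition \ref{pro1}. The second and most delicate step is to produce the two moment bounds appearing on the right of \eqref{uu4}: a lower bound on $\|DX_{\beta,t}\|_{L^2[0,T]}$ (equivalently, upper bounds on $E\|DX_{\beta,t}\|^{-8}_{L^2[0,T]}$ and $E\|DX_{\beta,t}\|^{-2}_{L^2[0,T]}$) and an upper bound on $E\big(\int_0^T\!\int_0^T |D_\theta D_r X_{\beta,t}|^2\,d\theta\,dr\big)^2$. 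Here is where the nondegeneracy $|\sigma|\ge\sigma_0>0$ enters: from $D_r X_{\beta,t} = \tfrac{1}{\Gamma(\beta)}(t-r)^{\beta-1}\sigma(r,X_{\beta,r}) + (\text{lower-order adapted terms})$ one gets $\|DX_{\beta,t}\|^2_{L^2[0,T]} \gtrsim \sigma_0^2 \int_0^t (t-r)^{2\beta-2}\,dr = \sigma_0^2 \, t^{2\beta-1}/(2\beta-1)$ up to controllable corrections, and a careful treatment (e.g. a Gronwall comparison between $D_rX_{\beta,t}$ and the leading kernel term, or a change of variables $r = t - v$) shows the negative moments are finite and of order $t^{-(2\beta-1)\cdot(\text{exponent})}$ with the right power of $t$. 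The restriction $\beta\in[\tfrac78,1]$ is exactly what is needed to keep these negative moments integrable and to make the resulting $t$-power combine correctly: tracking the exponents, $\big(E\|DX_{\beta,t}\|^{-8}\,E(\cdots)^2 + (E\|DX_{\beta,t}\|^{-2})^2\big)^{1/4}$ should be of order $t^{-\beta}(|\ln t|+1)$ after the dust settles.

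With those ingredients in hand, plug into \eqref{uu4}: combining the Malliavin prefactor of order $t^{-\beta}(|\ln t|+1)$ with $\|X_{\alpha,t}-X_{\beta,t}\|_{1,2}$, and using Proposition \ref{dl32} (the case $p=2$ of \eqref{eq2}, which gives $\|X_{\alpha,t}-X_{\beta,t}\|_{L^2}\le C t^{\alpha\wedge\beta-1/2}(|\ln t|+1)|\alpha-\beta|$) together with the analogous $\mathbb D^{1,2}$-norm bound for the Malliavin derivative difference $D(X_{\alpha,t}-X_{\beta,t})$ (proved by the same scheme as Proposition \ref{dl32}, differentiating the equation for $X_{\alpha,t}-X_{\beta,t}$ in the Malliavin sense), one obtains
\[
|Eg(X_{\alpha,t})-Eg(X_{\beta,t})| \le C\|g\|_\infty \, t^{-\beta}(|\ln t|+1)\cdot t^{\alpha\wedge\beta-1/2}(|\ln t|+1)\,|\alpha-\beta|.
\]
Since $\alpha\wedge\beta-1/2-\beta = \alpha\wedge\beta-\beta-1/2$, one extra step is needed to absorb the leftover $t^{-1/2}$: this is done by noting $\beta\ge\tfrac78$ forces $2\beta-1\ge\tfrac34$ so the negative-moment exponent can be taken slightly better, OR — more cleanly — by observing that the combination $t^{\alpha\wedge\beta-1/2}\cdot t^{-(2\beta-1)/2} = t^{\alpha\wedge\beta-\beta}$, which is precisely the claimed exponent in \eqref{pt2.1}; in other words one should be careful to use the sharp $-1/2$-power of $\|DX_{\beta,t}\|^{-2}$ rather than over-counting. (By homogeneity of \eqref{uu4} in $g$ one first reduces to $\|g\|_\infty\le 1$.)

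For the asymptotic identity \eqref{pt2.2}, start from the exact representation \eqref{oldl1} of Lemma \ref{dltq} with $F_1=X_{\beta,t}$, $F_2=X_{\alpha,t}$: divide by $\alpha-\beta$ and let $\alpha\to\beta$. The second term on the right of \eqref{oldl1} carries the factor $\langle D X_{\alpha,t}-D X_{\beta,t}, D X_{\beta,t}\rangle/\|D X_{\beta,t}\|^2$; dividing by $\alpha-\beta$ and using that $(X_{\alpha,t}-X_{\beta,t})/(\alpha-\beta)\to Y_{\beta,t}$ and $(DX_{\alpha,t}-DX_{\beta,t})/(\alpha-\beta)\to DY_{\beta,t}$ in $L^2$ (consequence of Theorem \ref{dl32m} and its Malliavin analogue) together with $g$ continuous and bounded so $g(X_{\alpha,t})\to g(X_{\beta,t})$, this term converges to $E\big[g(X_{\beta,t})\langle DY_{\beta,t},DX_{\beta,t}\rangle/\|DX_{\beta,t}\|^2\big]$. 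For the first term, $\frac{1}{\alpha-\beta}\int_{X_{\alpha,t}}^{X_{\beta,t}}g(z)\,dz \to -g(X_{\beta,t})Y_{\beta,t}$ (again by continuity of $g$ and Theorem \ref{dl32m}), giving $-E\big[g(X_{\beta,t})\,Y_{\beta,t}\,\delta(DX_{\beta,t}/\|DX_{\beta,t}\|^2)\big]$ in the limit; the passage to the limit under the expectation is justified by uniform integrability coming from the moment bounds already established (the negative moments of $\|DX_{\beta,t}\|$ and the $L^2$-convergence in Theorem \ref{dl32m}), noting the sign $\int_{F_2}^{F_1}=-\int_{F_1}^{F_2}$. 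Finally, reassembling via the divergence product rule \eqref{0jkd3}, $-Y_{\beta,t}\delta(DX_{\beta,t}/\|DX_{\beta,t}\|^2) + \langle DY_{\beta,t},DX_{\beta,t}\rangle/\|DX_{\beta,t}\|^2 = -\delta\big(Y_{\beta,t}DX_{\beta,t}/\|DX_{\beta,t}\|^2\big)$... one must be careful with signs here; rewriting so that the answer matches $E[g(X_{\beta,t})\delta(Y_{\beta,t}DX_{\beta,t}/\|DX_{\beta,t}\|_{L^2[0,T]})]$ as in \eqref{pt2.2} is the bookkeeping that closes the proof. The main obstacle throughout is Step 2: obtaining the negative moments of $\|DX_{\beta,t}\|_{L^2[0,T]}$ with the correct power of $t$ and uniformly enough to both prove \eqref{pt2.1} with the stated exponent and to justify the limit exchange in \eqref{pt2.2} — this is precisely what pins down the technical range $\beta\in[\tfrac78,1]$.
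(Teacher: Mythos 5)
Your proposal is correct and follows essentially the same route as the paper: Part 1 applies Lemma \ref{dltq} with $F_1=X_{\beta,t}$, $F_2=X_{\alpha,t}$ and combines the second-Malliavin-derivative bound, the negative moments of $\|DX_{\beta,t}\|_{L^2[0,T]}$ obtained from the nondegeneracy of $\sigma$ via a small-time localization near $r=t$ (these two propositions are exactly what force $\beta\ge\tfrac78$), and the $\|X_{\alpha,t}-X_{\beta,t}\|_{1,2}$ estimate, with the exponents combining as $t^{1/2-\beta}\cdot t^{\alpha\wedge\beta-1/2}=t^{\alpha\wedge\beta-\beta}$ just as you conclude after your self-correction; Part 2 likewise divides the exact representation \eqref{oldl1} by $\alpha-\beta$ and passes to the limit using Theorem \ref{dl32m}, the weak $L^2(\Omega\times[0,T])$-convergence of $\frac{DX_{\alpha,t}-DX_{\beta,t}}{\alpha-\beta}$ to $DY_{\beta,t}$, and the product rule \eqref{0jkd3}. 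The only wobble is your initial claim that the Malliavin prefactor is of order $t^{-\beta}(|\ln t|+1)$ (the logarithm actually comes from the $\|\cdot\|_{1,2}$ factor and the prefactor is $t^{1/2-\beta}$), but you resolve this correctly before concluding.
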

\subsection{Malliavin derivative of the solutions}
The proof of Theorem \ref{dlc} is based on Lemma \ref{dltq}. Hence, in this subsection,  we investigate Malliavin derivatives of the solution to the equation (\ref{eq1}).
	\begin{proposition}\label{pro2}Suppose Assumptions \ref{asum1} and \ref{asum3}. Let $(X_{\alpha,t})_{t\in [0,T]}$  be solution to the equation (\ref{eq1}). Then, for each $t\in[0,T],$ $X_{\alpha,t}$ is a twice Malliavin differentiable random variable. Moreover, the Malliavin derivative $D_\theta X_{\alpha,t}$ satisfies $D_\theta X_{\alpha,t}=0$ for $\theta>t$ and for $  0 \leq r,\theta < t,$
		\begin{align}
		D_{\theta}	X_{\alpha,t}= \frac{1}{\Gamma(\alpha)}\sigma(\theta,X_{\alpha,\theta} )(t-\theta)^{\alpha -1}& + \frac{1}{\Gamma(\alpha)}
		\int_{\theta}^t (t-s)^{\alpha -1}b'(s,X_{\alpha,s})D_{\theta}X_{\alpha,s}ds\notag \\
		&+\frac{1}{\Gamma(\alpha)}\int_{\theta}^t (t-s)^{\alpha -1} \sigma'(s,X_{\alpha,s})D_{\theta}X_{\alpha,s}dB_s,\label{dh}
		\end{align}
		and
\begin{multline}\label{dh2}
		D_rD_{\theta}X_{\alpha,t} = \frac{1}{\Gamma(\alpha)} \big( (t-r)^{\alpha -1}\sigma'(r,X_{\alpha,r})D_{\theta}X_{\alpha,r} +  (t- \theta)^{\alpha -1}\sigma'(\theta,X_{\alpha,\theta}) D_rX_{\alpha, \theta}\big)\\
		+ \frac{1}{\Gamma(\alpha)}\int_{r \vee \theta}^t  (t-s)^{\alpha -1}\left[b''(s,X_{\alpha,s})D_rX_{\alpha,s}D_{\theta}X_{\alpha,s}+ b'(s,X_{\alpha,s})D_rD_{\theta}X_{\alpha,s} \right]ds\\
		+ \frac{1}{\Gamma(\alpha)} \int_{r \vee \theta}^t (t-s)^{\alpha -1} \left[\sigma''(s,X_{\alpha,s})D_rX_{\alpha,s}D_{\theta}X_{\alpha,s} +  \sigma'(s,X_{\alpha,s})D_rD_{\theta}X_{\alpha,s}\right]dB_s.
 		\end{multline}
	\end{proposition}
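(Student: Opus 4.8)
The plan is to combine a Picard approximation scheme with the closability of the iterated Malliavin derivative operators. Put $X^{(0)}_{\alpha,t}:=x_0$ and, for $n\ge0$,
$$X^{(n+1)}_{\alpha,t}:=x_0+\frac{1}{\Gamma(\alpha)}\left(\int_0^t(t-s)^{\alpha-1}b(s,X^{(n)}_{\alpha,s})\,ds+\int_0^t(t-s)^{\alpha-1}\sigma(s,X^{(n)}_{\alpha,s})\,dB_s\right).$$
By the classical well-posedness theory for (\ref{eq1}) under Assumption \ref{asum1} (see \cite{SonDT,Wang1}), together with Proposition \ref{pro1}, one has $X^{(n)}_{\alpha,t}\to X_{\alpha,t}$ in $L^p(\Omega)$ uniformly in $t\in[0,T]$ for every $p\ge2$, and $\sup_n\sup_tE|X^{(n)}_{\alpha,t}|^p<\infty$. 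First I would show by induction on $n$ that $X^{(n)}_{\alpha,t}\in\mathbb{D}^{2,p}$ for all $p\ge2$: the base case is immediate, and the inductive step uses that $b(s,\cdot),\sigma(s,\cdot)\in C^2$ with first and second derivatives bounded by $L$ (Assumption \ref{asum3}), so the chain rule $D_\theta\big(b(s,X^{(n)}_{\alpha,s})\big)=b'(s,X^{(n)}_{\alpha,s})D_\theta X^{(n)}_{\alpha,s}$ applies, together with the commutation formulas $D_\theta\big(\int_0^t(t-s)^{\alpha-1}b(s,X^{(n)}_{\alpha,s})ds\big)=\int_\theta^t(t-s)^{\alpha-1}b'(s,X^{(n)}_{\alpha,s})D_\theta X^{(n)}_{\alpha,s}ds$ and $D_\theta\big(\int_0^t(t-s)^{\alpha-1}\sigma(s,X^{(n)}_{\alpha,s})dB_s\big)=(t-\theta)^{\alpha-1}\sigma(\theta,X^{(n)}_{\alpha,\theta})+\int_\theta^t(t-s)^{\alpha-1}\sigma'(s,X^{(n)}_{\alpha,s})D_\theta X^{(n)}_{\alpha,s}dB_s$. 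Applying $D_\theta$, and then $D_r$, to the $(n+1)$-st Picard equation in this fashion produces recursions for $D_\theta X^{(n+1)}_{\alpha,t}$ and $D_rD_\theta X^{(n+1)}_{\alpha,t}$ of exactly the form (\ref{dh}) and (\ref{dh2}) with $X^{(n)}$ in place of $X_\alpha$; note that $D_\theta X^{(n)}_{\alpha,t}=0$ for $\theta>t$ since $X^{(n)}_{\alpha,t}$ is $\mathcal{F}_t$-measurable, which is why the integrals start at $\theta$, respectively $r\vee\theta$.

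The heart of the argument is a priori bounds on the Malliavin norms that are uniform in $n$. From the first-derivative recursion, using the inequality (\ref{pt1}), the Burkh\"older--Davis--Gundy inequality, the H\"older trick (\ref{pt3}) and the boundedness of $b',\sigma'$, one gets for $p\ge2$
$$E|D_\theta X^{(n+1)}_{\alpha,t}|^p\le C(t-\theta)^{p(\alpha-1)}+C\int_\theta^t(t-s)^{2\alpha-2}E|D_\theta X^{(n)}_{\alpha,s}|^p\,ds .$$
Feeding the ansatz $E|D_\theta X^{(n)}_{\alpha,s}|^p\le C'(s-\theta)^{p(\alpha-1)}$ through this inequality — and using the Beta integral $\int_\theta^t(t-s)^{2\alpha-2}(s-\theta)^{p(\alpha-1)}ds=(t-\theta)^{2\alpha-1+p(\alpha-1)}B\big(2\alpha-1,\,p(\alpha-1)+1\big)$, which converges since $\alpha>\tfrac12$ and, for $p$ with $p(1-\alpha)<1$, also $p(\alpha-1)+1>0$ — one closes the induction on a short time interval and then patches, obtaining $\sup_nE|D_\theta X^{(n)}_{\alpha,t}|^p\le C(t-\theta)^{p(\alpha-1)}$. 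Since $2(\alpha-1)>-1$, Minkowski's inequality then yields $\sup_n\sup_t\|X^{(n)}_{\alpha,t}\|_{1,p}<\infty$. For the second derivative one proceeds identically, but now the recursion for $D_rD_\theta X^{(n+1)}_{\alpha,t}$ contains, besides the self-referential term $\int_{r\vee\theta}^t(t-s)^{\alpha-1}\big(b'D_rD_\theta X^{(n)}_{\alpha,s}+\sigma'D_rD_\theta X^{(n)}_{\alpha,s}\big)$, the products $b''(s,X^{(n)}_{\alpha,s})D_rX^{(n)}_{\alpha,s}D_\theta X^{(n)}_{\alpha,s}$, $\sigma''(s,X^{(n)}_{\alpha,s})D_rX^{(n)}_{\alpha,s}D_\theta X^{(n)}_{\alpha,s}$ and the boundary terms $(t-r)^{\alpha-1}\sigma'(r,X^{(n)}_{\alpha,r})D_\theta X^{(n)}_{\alpha,r}$, $(t-\theta)^{\alpha-1}\sigma'(\theta,X^{(n)}_{\alpha,\theta})D_rX^{(n)}_{\alpha,\theta}$; these are controlled by the first-derivative bounds already obtained, Cauchy--Schwarz, and Beta-type estimates for products of singular kernels such as $\int_{r\vee\theta}^t(t-s)^{2\alpha-2}(s-r)^{2\alpha-2}(s-\theta)^{2\alpha-2}ds<\infty$, while the singular Gronwall Lemma \ref{ger} (applied with $\eta=2\alpha-1$) absorbs the self-referential term and yields a bound on $\|X^{(n)}_{\alpha,t}\|_{2,p}$ that is uniform in $n$.

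With these uniform bounds in hand, the standard closability criterion — if $F_n\to F$ in $L^p(\Omega)$ and $\sup_n\|F_n\|_{2,p}<\infty$, then $F\in\mathbb{D}^{2,p}$ with $DF_n\rightharpoonup DF$ and $D^2F_n\rightharpoonup D^2F$ weakly — shows that $X_{\alpha,t}\in\mathbb{D}^{2,p}$, in particular $X_{\alpha,t}$ is twice Malliavin differentiable. To identify the limiting equations I would show that $\{DX^{(n)}_{\alpha,\cdot}\}_n$ and $\{D^2X^{(n)}_{\alpha,\cdot}\}_n$ are Cauchy: subtracting the recursions at levels $n$ and $m$, using that $b',\sigma'$ are Lipschitz and $b'',\sigma''$ bounded (Assumption \ref{asum3}), the mean value theorem, the strong convergence $X^{(n)}_\alpha\to X_\alpha$, and once more Lemma \ref{ger}, one obtains a geometric contraction; passing to the limit in the recursions then gives exactly (\ref{dh}) and (\ref{dh2}). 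Alternatively, once $X_{\alpha,t}\in\mathbb{D}^{2,2}$ is known, one may derive (\ref{dh}) and (\ref{dh2}) directly by applying $D_\theta$, and then $D_r$, to (\ref{eq1}), justifying the interchange of $D$ with the singular Lebesgue and It\^o integrals through the commutation formulas together with an approximation of the kernel $(t-s)^{\alpha-1}$ away from $s=t$.

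I expect the main obstacle to be the uniform-in-$n$ estimates for $D_rD_\theta X^{(n)}_{\alpha,t}$. Because the first Malliavin derivative already behaves like $(t-\theta)^{\alpha-1}$ as $\theta\uparrow t$, the product and boundary terms in (\ref{dh2}) involve integrals of two or three singular kernels, and keeping precise track of the exponents is where the hypothesis $\alpha>\tfrac12$ enters at every turn; moreover, the pointwise bound $E|D_\theta X_{\alpha,t}|^p\le C(t-\theta)^{p(\alpha-1)}$ is available only when $p(1-\alpha)<1$, so the $L^p$-integrability one can reach for $D^2X_{\alpha,t}$ deteriorates as $\alpha$ decreases — this is ultimately why the restriction $\beta\in[\tfrac78,1]$ is imposed in Theorem \ref{dlc} to guarantee $X_{\beta,t}\in\mathbb{D}^{2,4}$ and hence the applicability of Lemma \ref{dltq}. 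The role of Lemma \ref{ger} is precisely to convert the "finite for each $n$" estimates into bounds uniform in $n$, which is what the passage to the limit requires.
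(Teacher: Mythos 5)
Your proposal follows essentially the same route as the paper: a Picard approximation scheme, uniform-in-$n$ bounds on the Malliavin norms obtained via the singular Gronwall Lemma \ref{ger}, and the closability criterion (Lemma 1.5.3 in \cite{nualartm2}) to pass to the limit and obtain \eqref{dh} and \eqref{dh2}. You in fact supply more detail than the paper does --- notably the Beta-integral bookkeeping for the second-derivative recursion and the Cauchy argument identifying the limiting equations, which the paper dispatches with ``by using the same arguments'' --- and your remarks on where the restriction $p(1-\alpha)<1$ enters are consistent with Propositions \ref{pro3} and \ref{pro5}.
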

	\begin{proof}
		We consider the Picard approximation sequence ${X_{\alpha,t}^n}$ defined by $X^0_{\alpha,t} = x_0$ and for $n\geq 0,$
	$$ X^{n+1}_{\alpha,t} = x_0 + \frac{1}{\Gamma(\alpha)} \bigg[\int_0^t (t -s)^{\alpha -1} b(s, X^{n}_{\alpha,s})ds +\int_0^t (t -s)^{\alpha -1} \sigma(s, X^{n}_{\alpha,s})dB_s \bigg],\,\,0\leq t\leq T.
$$
It is well known that $X^{n}_{\alpha,t}  \rightarrow X_{\alpha,t} $  in $L^2(\Omega)$ as $n\to \infty$ (this is an immediate consequence of the contraction mapping argument presented in the proof of Theorem 3.3 in \cite{Abi2019a}).  We next prove that
$$\sup\limits_{n \ge 0} E\|D_{\theta} X^{n}_{\alpha,t}\|^2_{L^2[0,T]}< \infty,\,\,0\leq t\leq T.$$
		It is easy to see that $X^n_{\alpha,t}$ is Malliavin differentiable for all $n\ge 0$ and $t\in[0,T].$ Moreover, we have
		\begin{align*}
		D_{\theta}X^{n+1}_{\alpha,t} = \frac{1}{\Gamma(\alpha)} \bigg[
		\sigma(\theta,X^{n}_{\alpha,\theta} )(t-\theta)^{\alpha -1}&+\int_{\theta}^t (t-s)^{\alpha -1}b'(s,X^n_{\alpha,s})D_{\theta}X^n_{\alpha,s}ds\\
		&+\int_{\theta}^t (t-s)^{\alpha -1} \sigma'(s,X^n_{\alpha,s})D_{\theta}X^n_{\alpha,s}dB_s \bigg],\,\,0\leq t\leq T.
		\end{align*}
		By  the Cauchy-Schwarz inequality, we obtain
\begin{align*}
E\|D_{\theta}&X^{n+1}_{\alpha,t}\|^2_{L^2[0,T]}=\int_0^t E|D_{\theta}X^{n+1}_{\alpha,t}|^2d\theta\\
&\le 3\int_0^t E\sigma^2(\theta,X^{n}_{\alpha,\theta} )(t-\theta)^{2\alpha -2}d\theta +  3t\int_0^t \int_{\theta}^t (t-s)^{2\alpha -2}E|b'(s,X^n_{\alpha,s})D_{\theta}X^n_{\alpha,s}|^2ds d\theta \\
		& +3\int_0^t\int_{\theta}^t (t-s)^{2\alpha -2} E|\sigma'(s,X^n_{\alpha,s})D_{\theta}X^n_{\alpha,s}|^2dsd\theta,\,\,0\leq\theta< t\leq T.
		\end{align*}
By Assumption \ref{asum1} and the estimate (\ref{pt2}) 
\begin{equation}\label{q}
E\|D_{\theta}X^{n+1}_{\alpha,t}\|^2_{L^2[0,T]}\le C +  C\int_0^t (t-s)^{2\alpha -2}\int_{0}^s E|D_{\theta}X^n_{\alpha,s}|^2d\theta ds,\,\,0\leq t\leq T,
\end{equation}
where $C$ is a positive constant not depending on $t$ and $n.$ Put $u_t:=  \sup\limits_{n\geq 0}(E\|D_{\theta}X^n_{\alpha,t}\|^2_{L^2[0,T]})^p$. Taking supremum the two sides of (\ref{q}) we get
		\begin{align*}
		u_t \le C + C\int_0^t (t-s)^{2\alpha -2}u_sds,\,\,0\leq t\leq T.
		\end{align*}
So, by the inequality (\ref{dhk}) with  $\eta=2\alpha-1,$ we get
$$u(t)\leq 2C\exp\left(\frac{4^{\frac{2}{\eta}}C^{\frac{2}{\eta}}t^2}{\eta^{\frac{2}{\eta}}}\right),\,\,0\leq t\leq T.$$
This implies that
$$\sup\limits_{n \ge 0} E\|D_{\theta} X^{n}_{\alpha,t}\|^2_{L^2[0,T]}< \infty,\,\,0\leq t\leq T.$$	
We now use Lemma 1.5.3 in \cite{nualartm2} to conclude that, for any $t\in [0,T],$ the random variable $X_{\alpha,t}$ is Malliavin differentiable and its Malliavin derivative of $X_{\alpha,t}$ is given by (\ref{dh}).

By using the same arguments as above, we can deduce for each $t\in[0,T]$, $X_{\alpha,t}$ is a twice Malliavin differentiable random variable. The proof of the proposition is complete.
	\end{proof}
	\begin{proposition}\label{pro3}Assume that Assumptions \ref{asum1} and \ref{asum3} hold. Let $(X_{\alpha,t})_{t\in [0,T]}$  be solution to the equation (\ref{eq1}). Let $\alpha_0\in (\frac{1}{2},1).$ Then, for every $p \in [2,\frac{1}{1-\alpha_0}]$ and $\alpha\in \left[\alpha_0,1\right],$ we have
		\begin{align}\label{dtheta}
		\int_0^t E|D_{\theta}X_{\alpha,t}|^pd\theta \le C t^{(\alpha -1)p+1}\,\,\forall\,0 \le  t\le T,
		\end{align}
where $C$ is a positive constant depending only on  $L, T, p,$ $x_0$ and $\alpha_0.$
	\end{proposition}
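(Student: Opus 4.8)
The statement to prove is the moment bound
\[
\int_0^t E|D_\theta X_{\alpha,t}|^p\,d\theta \le C t^{(\alpha-1)p+1}
\]
for $p\in[2,\tfrac{1}{1-\alpha_0}]$. The starting point is the explicit formula \eqref{dh} for $D_\theta X_{\alpha,t}$ established in Proposition \ref{pro2}. I would first apply \eqref{pt1} to split $|D_\theta X_{\alpha,t}|^p$ into three pieces coming from the three summands on the right of \eqref{dh}: the ``initial'' term $\tfrac{1}{\Gamma(\alpha)}\sigma(\theta,X_{\alpha,\theta})(t-\theta)^{\alpha-1}$, the drift integral, and the stochastic integral. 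Using $\Gamma(\alpha)\ge 1$, Assumption \ref{asum3} (so $b',\sigma'$ bounded by $L$), the linear growth and moment bound \eqref{pt2}, the first term contributes $E|\sigma(\theta,X_{\alpha,\theta})|^p(t-\theta)^{(\alpha-1)p}\le C(t-\theta)^{(\alpha-1)p}$, and integrating in $\theta$ over $[0,t]$ gives $C\int_0^t (t-\theta)^{(\alpha-1)p}\,d\theta = \tfrac{C}{(\alpha-1)p+1}t^{(\alpha-1)p+1}$ — this is exactly where the constraint $p\le \tfrac{1}{1-\alpha_0}$ is needed, to keep the exponent $(\alpha-1)p+1>0$ so the integral converges (uniformly in $\alpha\in[\alpha_0,1]$).

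**The two integral terms.** For the drift term $\tfrac{1}{\Gamma(\alpha)}\int_\theta^t(t-s)^{\alpha-1}b'(s,X_{\alpha,s})D_\theta X_{\alpha,s}\,ds$, I would bound $|b'|\le L$ and apply the Hölder trick \eqref{pt3} with the singular weight $(t-s)^{\alpha-1}$: this yields
\[
E\Big|\int_\theta^t(t-s)^{\alpha-1}b'(s,X_{\alpha,s})D_\theta X_{\alpha,s}\,ds\Big|^p
\le L^p\Big(\int_\theta^t(t-s)^{\alpha-1}ds\Big)^{p-1}\int_\theta^t(t-s)^{\alpha-1}E|D_\theta X_{\alpha,s}|^p\,ds,
\]
and $\int_\theta^t(t-s)^{\alpha-1}ds\le \tfrac{T^\alpha}{\alpha}\le C$. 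For the stochastic integral term I would use Burkholder–Davis–Gundy to reduce it to $E\big(\int_\theta^t (t-s)^{2\alpha-2}|\sigma'(s,X_{\alpha,s})D_\theta X_{\alpha,s}|^2\,ds\big)^{p/2}$, then apply \eqref{pt3} again with weight $(t-s)^{2\alpha-2}$ and use $\int_\theta^t(t-s)^{2\alpha-2}\,ds\le \tfrac{T^{2\alpha-1}}{2\alpha-1}\le C$ (here $2\alpha-1>0$ since $\alpha>\tfrac12$), giving
\[
\le C\int_\theta^t(t-s)^{2\alpha-2}E|D_\theta X_{\alpha,s}|^p\,ds.
\]
Now integrate both estimates in $\theta$ over $[0,t]$, swap the order of integration (Tonelli), and set $\psi(t):=\int_0^t E|D_\theta X_{\alpha,t}|^p\,d\theta$. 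The drift contribution becomes $C\int_0^t(t-s)^{\alpha-1}\psi(s)\,ds$ and the diffusion contribution $C\int_0^t(t-s)^{2\alpha-2}\psi(s)\,ds$; since $2\alpha-2\le \alpha-1$ on $(\tfrac12,1]$ and $s\le T$, both can be dominated by $C\int_0^t(t-s)^{2\alpha-2}\psi(s)\,ds$. Altogether,
\[
\psi(t)\le C\,t^{(\alpha-1)p+1} + C\int_0^t(t-s)^{2\alpha-2}\psi(s)\,ds,\qquad 0\le t\le T.
\]

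**Closing the argument.** The function $\omega(t):=C\,t^{(\alpha-1)p+1}$ is non-decreasing on $[0,T]$ (its exponent is positive by the range of $p$), so I can invoke the Gronwall-type Lemma \ref{ger}, estimate \eqref{dhk}, with $\eta=2\alpha-1$ and $a=C$, to conclude
\[
\psi(t)\le 2C\,t^{(\alpha-1)p+1}\exp\!\Big(\tfrac{4^{2/\eta}C^{2/\eta}t^2}{\eta^{2/\eta}}\Big)\le C'\,t^{(\alpha-1)p+1},\qquad 0\le t\le T,
\]
which is \eqref{dtheta}. Throughout, one has to check the constants stay bounded uniformly in $\alpha\in[\alpha_0,1]$: this is where one uses $\alpha\ge\alpha_0$ (so $\eta=2\alpha-1\ge 2\alpha_0-1>0$ stays away from $0$, keeping the exponent $2/\eta$ and hence the Gronwall constant bounded) and $p\le\tfrac{1}{1-\alpha_0}$ (so $(\alpha-1)p+1\ge(\alpha_0-1)p+1\ge 0$). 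The main obstacle — really the only delicate point — is organizing the singular-kernel bookkeeping so that all exponents appearing, $(\alpha-1)p$, $2\alpha-2$, $(\alpha-1)p+1$, remain in the ranges where the integrals converge and the constants are controlled uniformly in $\alpha$; the moment bounds for $X_{\alpha,s}$ are already supplied by Proposition \ref{pro1}, and the existence of the Malliavin derivative with formula \eqref{dh} by Proposition \ref{pro2}, so no new probabilistic input is required.
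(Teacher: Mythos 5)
Your proof follows essentially the same route as the paper's: split \eqref{dh} via \eqref{pt1}, bound the initial term by $C(t-\theta)^{(\alpha-1)p}$, reduce the drift and stochastic integrals by the H\"older trick \eqref{pt3} and the Burkh\"older--Davis--Gundy inequality to $C\int_\theta^t(t-s)^{2\alpha-2}E|D_\theta X_{\alpha,s}|^p\,ds$, integrate in $\theta$, and close with the singular Gronwall estimate \eqref{dhk} with $\eta=2\alpha-1$. The only cosmetic difference is that you weight the drift term with $(t-s)^{\alpha-1}$ before dominating it by $(t-s)^{2\alpha-2}$, whereas the paper applies Cauchy--Schwarz first; the decomposition, the uniformity checks in $\alpha$ and $p$, and the conclusion are identical.
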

	\begin{proof}
For any $p\geq 2,$	by the inequality \eqref{pt1}, it follows from (\ref{dh}) that
		\begin{align*}
		E|D_{\theta}X_{\alpha,t}|^p  \le  3^{p-1} \bigg[E|\sigma(\theta,X_{\alpha,\theta} )|^p&(t-\theta)^{(\alpha -1)p} + E\bigg|\int_{\theta}^t (t-s)^{\alpha -1}b'(s,X_{\alpha,s})D_{\theta}X_{\alpha,s}ds\bigg|^p\\
		&+ E\bigg|\int_{\theta}^t (t-s)^{\alpha -1} \sigma'(s,X_{\alpha,s})D_{\theta}X_{\alpha,s}dB_s\bigg|^p \bigg],\,\,0 \le \theta < t\le T.
		\end{align*}
		By the estimate (\ref{pt2}) and linear growth property of $\sigma,$ one can derive that
$$E[|\sigma(\theta,X_{\alpha,\theta} )|^p(t-\theta)^{(\alpha -1)p}]\le C(t-\theta)^{(\alpha -1)p},\,\,0 \le \theta < t\le T,$$
where $C$ is a positive constant depending only on $L, T, p,$ $\alpha_0$ and $x_0.$ In addition, by Assumption \ref{asum3}, the  H\"{o}lder inequality and   Burkh\"older-Davis-Gundy inequality, we have
		\begin{align*}
		E\bigg|\int_{\theta}^t (t-s)^{\alpha -1}b'(s,X_{\alpha,s})&D_{\theta}X_{\alpha,s}ds\bigg|^p
  \le L(t-\theta)^{\frac{p}{2}} \bigg(\int_{\theta}^t (t-s)^{2\alpha -2} E|D_{\theta}X_{\alpha,s}|^2ds\bigg)^{\frac{p}{2}}\\
		& \le L t^{\frac{p}{2}}\bigg(\int_{\theta}^t(t-s)^{2\alpha -2} ds\bigg)^{\frac{p}{2}-1} \int_{\theta}^t(t-s)^{2\alpha -2} E|D_{\theta}X_{\alpha,s}|^pds\\
		& \le Lt^{\frac{p}{2}}\left(\frac{t^{2\alpha-1}}{2\alpha-1}\right)^{\frac{p}{2}-1} \int_{\theta}^t(t-s)^{2\alpha -2} E|D_{\theta}X_{\alpha,s}|^pds\\&
		\leq C\int_{\theta}^t(t-s)^{2\alpha -2} E|D_{\theta}X_{\alpha,s}|^pds,\,\,0 \le \theta < t\le T.
		\end{align*}
		and, for some $c_p>0,$
		\begin{align*}
		E\bigg|\int_{\theta}^t (t-s)^{\alpha -1} \sigma'(s,X_{\alpha,s})&D_{\theta}X_{\alpha,s}dB_s\bigg|^p \le Lc_p E\bigg(\int_{\theta}^t  (t-s)^{2\alpha -2} |D_{\theta}X_{\alpha,s}|^2ds\bigg)^{\frac{p}{2}}\\
		&  \le Lc_p \bigg(\int_\theta^t(t-s)^{2\alpha -2} ds\bigg)^{\frac{p}{2}-1} \int_{\theta}^t(t-s)^{2\alpha -2} E|D_{\theta}X_{\alpha,s}|^pds\\
		& \le Lc_p \left(\frac{t^{2\alpha-1}}{2\alpha-1}\right)^{\frac{p}{2}-1} \int_{\theta}^t(t-s)^{2\alpha -2} E|D_{\theta}X_{\alpha,s}|^pds\\&
		\leq C\int_{\theta}^t(t-s)^{2\alpha -2} E|D_{\theta}X_{\alpha,s}|^pds,\,\,0 \le \theta < t\le T,
		\end{align*}
		where  $C$ is a positive constant depending  only on  $L, T, p,$ $\alpha_0$ and $x_0.$	We deduce
		\begin{align*}
		E|D_{\theta}X_{\alpha,t}|^p  \le C(t-\theta)^{(\alpha -1)p} +  C \int_{\theta}^t(t-s)^{2\alpha -2} E|D_{\theta}X_{\alpha,s}|^pds,\,\,0 \le \theta < t\le T.
		\end{align*}
Fixed $p \in [2,\frac{1}{1-\alpha_0}],$ we obtain
\begin{align*}
\int_0^t E|D_{\theta}X_{\alpha,t}|^pd\theta  \le Ct^{(\alpha -1)p+1} +  C \int_0^t(t-s)^{2\alpha -2} \int_0^sE|D_{\theta}X_{\alpha,s}|^pd\theta ds,\,\,0 \le \theta < t\le T.
\end{align*}
Since $w(t)=t^{(\alpha -1)p+1},0\leq t\leq T$ is an increasing function, this allows us to use the inequality (\ref{dhk}) with  $\eta=2\alpha-1$ and we obtain the desired estimate (\ref{dtheta}).

The proof  of the proposition is complete.
	\end{proof}
	\begin{proposition}\label{pro5}Suppose that Assumptions \ref{asum1} and \ref{asum3} hold. Let $(X_{\alpha,t})_{t\in [0,T]}$  be solution to the equation (\ref{eq1}). 	Let $\alpha_0\in (\frac{3}{4},1).$ Then, for every $p \in (1,\frac{1}{4-4\alpha_0}]$ and $\alpha\in \left[\alpha_0,1\right],$ we have
		$$E\left[ \int_0^t\int_0^t|D_rD_{\theta}X_{\alpha,t}|^2drd\theta\right]^p\leq Ct^{2p(2\alpha-1)},\,\,0\leq t\leq T, $$
where  $C$ is a positive constant depending  only on  $L, T, p,$ $\alpha_0$ and $x_0.$	
	\end{proposition}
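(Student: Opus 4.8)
The plan is to differentiate under the representation \eqref{dh2} and close a singular Gronwall estimate for
$$\Phi_{\alpha,t}:=\int_0^t\int_0^t|D_rD_{\theta}X_{\alpha,t}|^2\,drd\theta,$$
mirroring the proof of Proposition \ref{pro3}. Write $D_rD_{\theta}X_{\alpha,t}=A_1+A_2+A_3+A_4+A_5+A_6$ for the six summands on the right-hand side of \eqref{dh2}, where $A_1,A_2$ are the two non-integral terms carrying $\sigma'(r,\cdot)$ and $\sigma'(\theta,\cdot)$, $A_3,A_5$ are the $ds$- and $dB_s$-integrals carrying $b''$ and $\sigma''$ (hence the product $D_rX_{\alpha,s}D_{\theta}X_{\alpha,s}$), and $A_4,A_6$ are the $ds$- and $dB_s$-integrals carrying $b'$ and $\sigma'$ (hence $D_rD_{\theta}X_{\alpha,s}$ itself). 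By \eqref{pt1} used twice,
$$E[\Phi_{\alpha,t}^p]\le C\sum_{i=1}^{6}E\Big[\Big(\int_0^t\int_0^t|A_i|^2\,drd\theta\Big)^p\Big].$$
The terms $A_1,A_2,A_3,A_5$ do not contain $D_rD_{\theta}X$ and I would bound them outright; $A_4,A_6$ reproduce $\Phi$ and feed the Gronwall step.

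For $A_1$ (and $A_2$, symmetric under $r\leftrightarrow\theta$), using $|\sigma'|\le L$, $\Gamma(\alpha)\ge1$ and $D_{\theta}X_{\alpha,r}=0$ for $\theta>r$,
$$\int_0^t\int_0^t|A_1|^2\,drd\theta\le L^2\int_0^t(t-r)^{2\alpha-2}\|D_\cdot X_{\alpha,r}\|_{L^2[0,T]}^2\,dr;$$
taking $p$-th moments with the H\"older trick \eqref{pt3}, bounding $E\|D_\cdot X_{\alpha,r}\|_{L^2[0,T]}^{2p}\le r^{p-1}\int_0^r E|D_{\theta}X_{\alpha,r}|^{2p}\,d\theta$ and applying Proposition \ref{pro3} with exponent $2p$, a Beta-integral computation returns exactly $Ct^{2p(2\alpha-1)}$. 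For $A_3$ and $A_5$, Cauchy--Schwarz in $s$ (respectively the Burkh\"older--Davis--Gundy inequality for the stochastic integral $A_5$), the bounds $|b''|,|\sigma''|\le L$ and \eqref{pt3} reduce the estimate to a power of $\int_0^t E\|D_\cdot X_{\alpha,s}\|_{L^2[0,T]}^{4p}\,ds$; using $E\|D_\cdot X_{\alpha,s}\|_{L^2[0,T]}^{4p}\le s^{2p-1}\int_0^s E|D_{\theta}X_{\alpha,s}|^{4p}\,d\theta$ and Proposition \ref{pro3} with exponent $4p$ — which is exactly why one needs $4p\le\frac{1}{1-\alpha_0}$, i.e. $p\le\frac{1}{4-4\alpha_0}$, and hence $\alpha_0>\frac34$ for the range of $p$ to be nonempty — produces a bound of the form $Ct^{2p(2\alpha-1)}$, the surplus nonnegative powers of $t$ being absorbed into powers of $T$.

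For the recursive terms I would pass to the Hilbert space $H=L^2([0,t]^2)$ and view $A_4$, $A_6$ as $H$-valued Bochner and It\^o integrals of the process $s\mapsto\frac{1}{\Gamma(\alpha)}(t-s)^{\alpha-1}b'(s,X_{\alpha,s})D_\cdot D_\cdot X_{\alpha,s}$ (supported on $\{r\vee\theta<s\}$) and its $\sigma'$-analogue. Minkowski's integral inequality gives $\|A_4\|_H\le\frac{L}{\Gamma(\alpha)}\int_0^t(t-s)^{\alpha-1}\Phi_{\alpha,s}^{1/2}\,ds$, and Burkholder's inequality in $H$ gives $E\|A_6\|_H^{2p}\le c_pL^{2p}E\big(\int_0^t(t-s)^{2\alpha-2}\Phi_{\alpha,s}\,ds\big)^p$. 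Applying \eqref{pt3} once more and the elementary inequality $(t-s)^{\alpha-1}\le C(t-s)^{2\alpha-2}$ on $(0,T]$ (valid since $2\alpha-2<\alpha-1<0$), both terms obey $E[(\int_0^t\int_0^t|A_i|^2\,drd\theta)^p]\le C\int_0^t(t-s)^{2\alpha-2}E[\Phi_{\alpha,s}^p]\,ds$ for $i=4,6$. Summing the six contributions yields
$$E[\Phi_{\alpha,t}^p]\le Ct^{2p(2\alpha-1)}+C\int_0^t(t-s)^{2\alpha-2}E[\Phi_{\alpha,s}^p]\,ds,\qquad 0\le t\le T.$$

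Since $t\mapsto Ct^{2p(2\alpha-1)}$ is non-decreasing ($2p(2\alpha-1)>0$), the estimate \eqref{dhk} of Lemma \ref{ger} with $\eta=2\alpha-1$ (for $\alpha=1$ the classical Gronwall inequality applies) gives $E[\Phi_{\alpha,t}^p]\le 2Ct^{2p(2\alpha-1)}\exp(C't^2)\le Ct^{2p(2\alpha-1)}$ on $[0,T]$, which is the assertion. Before running Gronwall one must know that $E[\Phi_{\alpha,t}^p]$ is finite; this follows from Proposition \ref{pro2} together with the same Picard-approximation/Fatou argument used there, bounding $\sup_nE\|D^2X^n_{\alpha,t}\|_{L^2([0,T]^2)}^{2p}$ uniformly in $n$ and $t$. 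The main obstacle is the analysis of $A_3$ and $A_5$: controlling the product $D_rX_{\alpha,s}D_{\theta}X_{\alpha,s}$ forces the use of Proposition \ref{pro3} at exponent $4p$, which is the exact origin of the hypotheses $p\le\frac{1}{4-4\alpha_0}$ and $\alpha_0>\frac34$; keeping the singular kernel $(t-s)^{2\alpha-2}$ intact for $A_4,A_6$ via the Hilbert-space viewpoint is what makes the Gronwall step go through cleanly.
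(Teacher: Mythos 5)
Your proposal is correct and follows essentially the same route as the paper: the same decomposition of \eqref{dh2}, the same use of Proposition \ref{pro3} at exponents $2p$ and $4p$ (which is indeed the exact source of the restriction $p\le\frac{1}{4-4\alpha_0}$ and hence $\alpha_0>\frac34$), and the same closure via the singular Gronwall estimate \eqref{dhk} with $\eta=2\alpha-1$. The only organizational difference is that the paper applies scalar BDG pointwise in $(r,\theta)$, runs Gronwall on $v(t)=\int_0^t\int_0^t E|D_rD_\theta X_{\alpha,t}|^{2p}\,drd\theta$, and only at the end converts to $E[\Phi_{\alpha,t}^p]$ by the H\"older bound $E[\Phi_{\alpha,t}^p]\le Ct^{2(p-1)}v(t)$, whereas you run Gronwall directly on $E[\Phi_{\alpha,t}^p]$ via Hilbert-space-valued Minkowski and BDG inequalities; both give the exponent $2p(2\alpha-1)$.
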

	\begin{proof} For any $p>1,$ using Assumption \ref{asum3}, the  H\"{o}lder  and   Burkh\"older-Davis-Gundy inequalities, it follows from \eqref{dh2} that	
		\begin{align*}
		E|D_rD_{\theta}&X_{\alpha,t}|^{2p}
\le C (t-r)^{2p(\alpha-1)}E|D_{\theta}X_{\alpha,r}|^{2p} + C(t- \theta)^{2p(\alpha-1)}E|D_rX_{\alpha, \theta}|^{2p}\\
		&+CE\left(\int_{r\vee \theta}^t (t-s)^{2\alpha-2}\left(|D_rX_{\alpha,s}D_{\theta}X_{\alpha,s}| +|D_rD_{\theta}X_{\alpha,s}|\right)^2ds\right)^{p}\\
&\le C (t-r)^{2p(\alpha-1)}E|D_{\theta}X_{\alpha,r}|^{2p} + C(t- \theta)^{2p(\alpha-1)}E|D_rX_{\alpha, \theta}|^{2p}\\
&+C\left(\int_{r\vee \theta}^t (t-s)^{2\alpha-2}ds\right)^{p-1}\left(\int_{r\vee \theta}^t (t-s)^{2\alpha-2}E\left(|D_rX_{\alpha,s}D_{\theta}X_{\alpha,s}| +|D_rD_{\theta}X_{\alpha,s}|\right)^{2p}ds\right)
\end{align*}
for all $0 \leq r,\theta < t\leq T,$ where  $C$ is a positive constant depending  only on  $L, T,$ and  $p.$ Note that $E|D_rX_{\alpha,s}D_{\theta}X_{\alpha,s}|^{2p}\leq \sqrt{E|D_rX_{\alpha,s}|^{4p}}\sqrt{E|D_{\theta}X_{\alpha,s}|^{4p}}.$ Then, we deduce
\begin{align}
& \int_0^t\int_0^tE|D_rD_{\theta}X_{\alpha,t}|^{2p}drd\theta\notag\\
&\le C \int_0^t(t-r)^{2p(\alpha-1)}\int_0^rE|D_{\theta}X_{\alpha,r}|^{2p}d\theta dr + C\int_0^t(t- \theta)^{2p(\alpha-1)}\int_0^\theta E|D_rX_{\alpha, \theta}|^{2p}drd\theta\notag\\
&+C\left(\int_{r\vee \theta}^t (t-s)^{2\alpha-2}ds\right)^{p-1}\int_0^t (t-s)^{2\alpha-2}\int_0^s\sqrt{E|D_rX_{\alpha,s}|^{4p}}dr\int_0^s\sqrt{E|D_{\theta}X_{\alpha,s}|^{4p}}d\theta ds\notag\\
&+C\int_{0}^t (t-s)^{2\alpha-2}\left(\int_0^s\int_0^sE|D_rD_{\theta}X_{\alpha,s}|^{2p}drd\theta\right)ds.\notag
\end{align}
Fixed $p \in (1,\frac{1}{4-4\alpha_0}].$ Recalling the estimate (\ref{dtheta}) gives us
\begin{align}
& \int_0^t\int_0^tE|D_rD_{\theta}X_{\alpha,t}|^{2p}drd\theta\notag\\
&\le C \int_0^t(t-r)^{2p(\alpha-1)}r^{2p(\alpha-1)+1} dr + C\int_0^t(t- \theta)^{2p(\alpha-1)}\theta^{2p(\alpha-1)+1} d\theta\notag\\
&+C\int_0^t (t-s)^{2\alpha-2}s^{4p(\alpha-1)+2} ds+C\int_{0}^t (t-s)^{2\alpha-2}\left(\int_0^s\int_0^sE|D_rD_{\theta}X_{\alpha,s}|^{2p}drd\theta\right)ds,\label{7yi}
\end{align}
where  $C$ is a positive constant depending  on $L, T, p,$ $\alpha_0$ and $x_0$. 	Next, we observe that
$$\int_0^t(t-r)^{2p(\alpha-1)}r^{2p(\alpha-1)+1} dr=t^{4p(\alpha-1)+2} \int_0^1(1-u)^{2p(\alpha-1)}u^{2p(\alpha-1)+1}du$$
and
$$\int_0^t (t-s)^{2\alpha-2}s^{4p(\alpha-1)+2} ds=t^{(4p+2)(\alpha-1)+3} \int_0^t(1-u)^{2\alpha-2}u^{4p(\alpha-1)+2}du$$
Furthermore, $\int_0^1(1-u)^{2p(\alpha-1)}u^{2p(\alpha-1)+1}du=\frac{\Gamma(2p(\alpha-1)+1)\Gamma(2p(\alpha-1)+2)}{\Gamma(4p(\alpha-1)+3)}$ and $\int_0^1(1-u)^{2\alpha-2}u^{4p(\alpha-1)+2}du=\frac{\Gamma(2\alpha-1)\Gamma(4p(\alpha-1)+3)}{\Gamma((4p+2)(\alpha-1)+4)}.$ Hence, for every $p \in (1,\frac{1}{4-4\alpha_0}],$ we can use the part $(ii)$ of Lemma \ref{l1} to verify that the integrals $\int_0^1(1-u)^{2p(\alpha-1)}u^{2p(\alpha-1)+1}du$ and $\int_0^1(1-u)^{2\alpha-2}u^{4p(\alpha-1)+2}du$ are bounded uniformly in $\alpha\in [\alpha_0,1].$  So we obtain from (\ref{7yi}) that
		\begin{align*}
		 \int_0^t\int_0^tE|D_rD_{\theta}X_{\alpha,t}|^{2p}drd\theta \le Ct^{4p(\alpha-1)+2}+C\int_{0}^t (t-s)^{2\alpha-2}\left(\int_0^s\int_0^sE|D_rD_{\theta}X_{\alpha,s}|^{2p}drd\theta\right)ds,
		\end{align*}where  $C$ is a positive constant depending  on $L, T, p,$ $\alpha_0$ and $x_0$. Therefore, by using the inequality (\ref{dhk}) with  $\eta=2\alpha-1,$  we obtain
$$	\int_0^t\int_0^tE|D_rD_{\theta}X_{\alpha,t}|^{2p}drd\theta\leq Ct^{4p(\alpha-1)+2},\,\,0\leq t\leq T,$$
	where $p \in (1,\frac{1}{4-4\alpha_0}]$ and $C$ is a positive constant depending  only on $L, T, p,$ $\alpha_0$ and $x_0$.	Using the H\"{o}lder inequality, we deduce
		\begin{align*}
		E\left[ \int_0^t\int_0^t|D_rD_{\theta}X_{\alpha,t}|^2drd\theta\right]^p\le Ct^{2(p-1)}\int_0^t\int_0^tE |D_rD_{\theta}X_{\alpha,t}|^{2p}drd\theta \le Ct^{2p(2\alpha-1)}.
		\end{align*}
		The proof of the proposition is complete.
	\end{proof}
\subsection{Proof of Theorem \ref{dlc}}
The proof of Theorem \ref{dlc} will be given at the end of this subsection. In order to be able to apply Lemma \ref{dltq}, we first need to prepare some technical results.
	\begin{proposition}\label{pro4}Assume that Assumptions \ref{asum1} and \ref{asum3} hold. Let $(X_{\alpha,t})_{t\in [0,T]}$ and $(X_{\beta,t})_{t\in [0,T]}$ be the solutions to the equations (\ref{eq1}) and  (\ref{eq1qc}), respectively. Then, for  every $p\geq 2$ and $\alpha_0\in (\frac{1}{2}, 1],$ we have
		$$ E\| DX_{\alpha,t}-DX_{\beta,t}\|_{L^2[0,T]}^2\leq C t^{2(\alpha\wedge\beta)-1}(|\ln t|^2+1)|\alpha-\beta|^2\,\,\forall\,t\in (0,T],\alpha,\beta\in [\alpha_0,1], $$
			where  $C$ is a positive constant depending  on $L,T,\alpha_0$ and $x_0.$
	\end{proposition}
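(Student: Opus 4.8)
The plan is to subtract the two integral equations (\ref{dh}) for $D_\theta X_{\alpha,t}$ and $D_\theta X_{\beta,t}$, integrate the squared difference in $\theta$ over $[0,t]$, take expectations, and close the estimate by the singular Gronwall lemma (Lemma~\ref{ger}). Writing $D_\theta X_{\alpha,t}-D_\theta X_{\beta,t}$ from (\ref{dh}), I would group the terms into three types, exactly as in the proof of Proposition~\ref{dl32}: (a) the difference of the leading terms $\tfrac{1}{\Gamma(\alpha)}\sigma(\theta,X_{\alpha,\theta})(t-\theta)^{\alpha-1}-\tfrac{1}{\Gamma(\beta)}\sigma(\theta,X_{\beta,\theta})(t-\theta)^{\beta-1}$, further split using $\tfrac1{\Gamma(\alpha)}-\tfrac1{\Gamma(\beta)}$, $(t-\theta)^{\alpha-1}-(t-\theta)^{\beta-1}$, and $\sigma(\theta,X_{\alpha,\theta})-\sigma(\theta,X_{\beta,\theta})$; (b) the drift integral differences, split the same way into a $\Gamma$-factor piece, a kernel-difference piece $((t-s)^{\alpha-1}-(t-s)^{\beta-1})$, a coefficient-difference piece involving $b'(s,X_{\alpha,s})-b'(s,X_{\beta,s})$ and $D_\theta X_{\alpha,s}-D_\theta X_{\beta,s}$, and the ``good'' piece $\tfrac{1}{\Gamma(\beta)}\int_\theta^t (t-s)^{\beta-1}b'(s,X_{\beta,s})(D_\theta X_{\alpha,s}-D_\theta X_{\beta,s})\,ds$; and (c) the analogous decomposition of the stochastic-integral term, handled by Burkhölder--Davis--Gundy.

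For the bookkeeping I would rely on the tools already assembled: Lemma~\ref{l2} for $|\tfrac1{\Gamma(\alpha)}-\tfrac1{\Gamma(\beta)}|\le C|\alpha-\beta|$; Lemma~\ref{lm3}, inequalities (\ref{hh2}) and (\ref{hh2.1}), for $\int_0^t((t-s)^{\alpha-1}-(t-s)^{\beta-1})^2\,ds$ and its $\ln^2(t-s)$-weighted version; Proposition~\ref{pro2} and Proposition~\ref{pro3} (with $p=2$) for the uniform bounds $\int_0^tE|D_\theta X_{\beta,t}|^2\,d\theta\le Ct^{2\beta-1}$ and $\sup_{n}E\|DX_{\alpha,t}^n\|^2<\infty$; Proposition~\ref{dl32} with $p=2$ for $E|X_{\alpha,s}-X_{\beta,s}|^2\le Cs^{2(\alpha\wedge\beta)-1}(|\ln s|^2+1)|\alpha-\beta|^2$; Assumption~\ref{asum1} for the Lipschitz/linear-growth bounds; and Assumption~\ref{asum3} for boundedness of $b',\sigma'$ (and local Lipschitz continuity of $b'$ via $|b'(s,X_{\alpha,s})-b'(s,X_{\beta,s})|\le L|X_{\alpha,s}-X_{\beta,s}|$, using the boundedness of $b''$). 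Throughout I would use the Hölder trick (\ref{pt3}) to pull the singular kernel $(t-s)^{2\alpha-2}$ out of the power, as done in Propositions~\ref{pro1}--\ref{pro3}. After integrating in $\theta$ and collecting all the ``forcing'' contributions, the inequality takes the shape
\begin{align*}
v(t):=E\|DX_{\alpha,t}-DX_{\beta,t}\|_{L^2[0,T]}^2
\le C(t^{2\alpha-1}+t^{2\beta-1})(|\ln t|^2+1)|\alpha-\beta|^2
+C\int_0^t (t-s)^{2(\alpha\wedge\beta)-2}\,v(s)\,ds,
\end{align*}
and applying Lemma~\ref{ger} (inequality (\ref{dhk3})) with $\eta=2(\alpha\wedge\beta)-1$, together with the routine observation that $\bigl(\int_0^1 (s^{2(\alpha\wedge\beta)-1}|\ln s|^2)^{2/\eta}\,ds\bigr)^{\eta/2}$ is bounded uniformly in $\alpha,\beta\in[\alpha_0,1]$, yields the claimed bound $v(t)\le Ct^{2(\alpha\wedge\beta)-1}(|\ln t|^2+1)|\alpha-\beta|^2$.

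The main obstacle is the leading term $\tfrac{1}{\Gamma(\alpha)}\sigma(\theta,X_{\alpha,\theta})(t-\theta)^{\alpha-1}-\tfrac{1}{\Gamma(\beta)}\sigma(\theta,X_{\beta,\theta})(t-\theta)^{\beta-1}$ that appears \emph{inside} the $L^2(d\theta)$ norm rather than inside a time integral: after splitting, its kernel-difference piece contributes $\sigma^2(\theta,X_{\beta,\theta})((t-\theta)^{\alpha-1}-(t-\theta)^{\beta-1})^2$, whose $\theta$-integral is controlled by (\ref{hh2}) but produces the $(|\ln t|^2+1)$ factor and forces the exponent $t^{2(\alpha\wedge\beta)-1}$; one must check carefully that no worse power of $t$ or $\ln t$ is generated, and in particular that the $\ln^2(t-s)$-weighted estimate (\ref{hh2.1}) is only needed for the subordinate terms and does not degrade the final rate. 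The coefficient-difference piece $\sigma(\theta,X_{\alpha,\theta})-\sigma(\theta,X_{\beta,\theta})$ in the leading term is handled directly by Proposition~\ref{dl32}, contributing $E|X_{\alpha,\theta}-X_{\beta,\theta}|^2(t-\theta)^{2(\alpha\wedge\beta)-2}$ integrated in $\theta$, which again lands on the same rate. Everything else is a mechanical repetition of the estimates in Propositions~\ref{pro1}--\ref{pro3}.
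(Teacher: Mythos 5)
Your proposal is correct and follows essentially the same route as the paper: the same three-way decomposition of $D_\theta X_{\alpha,t}-D_\theta X_{\beta,t}$ into the leading $\sigma$-term, the drift integral and the stochastic integral, the same further splitting via $\frac{1}{\Gamma(\alpha)}-\frac{1}{\Gamma(\beta)}$, the kernel difference, the coefficient difference and the Gronwall piece, the same supporting estimates (Lemmas \ref{l2} and \ref{lm3}, Propositions \ref{pro1}, \ref{dl32} and \ref{pro3}, Cauchy--Schwarz on the product $|X_{\alpha,s}-X_{\beta,s}|\,|D_\theta X_{\beta,s}|$), and the same closing step via the singular Gronwall lemma. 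The minor cosmetic differences (e.g.\ which factor you keep in the ``good'' Gronwall term) do not affect the argument.
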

	\begin{proof}
		We have
		\begin{align*}
		D_{\theta}X_{\alpha,t}-D_{\theta}X_{\beta,t}&=  \frac{1}{\Gamma(\alpha)} \bigg[
		\sigma(\theta,X_{\alpha,\theta} )(t-\theta)^{\alpha -1}+\int_{\theta}^t (t-s)^{\alpha -1}b'(s,X_{\alpha,s})D_{\theta}X_{\alpha,s}ds \bigg]\\
		& +\frac{1}{\Gamma(\alpha)}\int_{\theta}^t (t-s)^{\alpha -1} \sigma'(s,X_{\alpha,s})D_{\theta}X_{\alpha,s}dB_s \\
		& -  \frac{1}{\Gamma(\beta)} \bigg[
		\sigma(\theta,X_{\beta,\theta} )(t-\theta)^{\beta -1}+\int_{\theta}^t (t-s)^{\beta -1}b'(s,X_{\beta,s})D_{\theta}X_{\beta,s}ds\bigg]\\
		& -\frac{1}{\Gamma(\beta)}\int_{\theta}^t (t-s)^{\beta -1} \sigma'(s,X_{\beta,s})D_{\theta}X_{\beta,s}dB_s \bigg] \\
		& = 	J_1(\theta)+	J_2(\theta)+	J_3(\theta),\,\,0\leq t\leq T,
		\end{align*}
		where
		\begin{align*}
		&	J_1(\theta) = \frac{1}{\Gamma(\alpha)}\sigma(\theta,X_{\alpha,\theta} )(t-\theta)^{\alpha -1} -\frac{1}{\Gamma(\beta)} \sigma(\theta,X_{\beta,\theta} )(t-\theta)^{\beta -1},\\
		& J_2(\theta) =  \frac{1}{\Gamma(\alpha)}\int_{\theta}^t (t-s)^{\alpha -1}b'(s,X_{\alpha,s})D_{\theta}X_{\alpha,s}ds -\frac{1}{\Gamma(\beta)} \int_{\theta}^t (t-s)^{\beta -1}b'(s,X_{\beta,s})D_{\theta}X_{\beta,s}ds,\\
		&J_3(\theta) = \frac{1}{\Gamma(\alpha)} \int_{\theta}^t (t-s)^{\alpha -1} \sigma'(s,X_{\alpha,s})D_{\theta}X_{\alpha,s}dB_s - \frac{1}{\Gamma(\beta)}\int_{\theta}^t (t-s)^{\beta -1} \sigma'(s,X_{\beta,s})D_{\theta}X_{\beta,s}dB_s.
		\end{align*}
		By the  inequality \eqref{pt1}, we  have
		\begin{align*}
		E\|	DX_{\alpha,t}-DX_{\beta,t}\|_{L^2[0,T]}^2& = \int_0^t E|	D_{\theta}X_{\alpha,t}-D_{\theta}X_{\beta,t}|^2 d\theta\\
& \le 3\int_0^t(E|J_1(\theta)|^2+ E|J_2(\theta)|^2+E|J_3(\theta)|^2)d\theta,\,\,0\leq t\leq T.
		\end{align*}
		We next observe that
		\begin{align*}
		J_1(\theta)=   \frac{1}{\Gamma(\alpha)}\sigma(\theta,X_{\alpha,\theta} ) \Big((t-\theta)^{\alpha-1} -(t-\theta)^{\beta-1}\Big)+ \frac{1}{\Gamma(\alpha)}(t-\theta)^{\beta-1}\Big(\sigma(\theta,X_{\alpha,\theta} )-\sigma(\theta,X_{\beta,\theta} )\Big)\\
		+\bigg( \frac{1}{\Gamma(\alpha)} -  \frac{1}{\Gamma(\beta)} \bigg)\sigma(\theta,X_{\beta,\theta} )(t-\theta)^{\beta -1}.
		\end{align*}
Then, 	using the  inequality \eqref{pt1} and Assumption \ref{asum1}, we deduce
		\begin{align*}
		\int_0^t E|J_1(\theta)|^2d\theta &\le \frac{3}{\Gamma^2(\alpha)}\int_0^t \Big((t-\theta)^{\alpha-1} -(t-\theta)^{\beta-1}\Big)^2 E|\sigma(\theta,X_{\alpha,\theta} )|^2d\theta\\& \ \ +\frac{3}{\Gamma^2(\alpha)}\int_0^t (t-\theta)^{2\beta-2}E\Big(\sigma(\theta,X_{\alpha,\theta} )-\sigma(\theta,X_{\beta,\theta} )\Big)^2d\theta\\& \ \ +3\bigg( \frac{1}{\Gamma(\alpha)} -  \frac{1}{\Gamma(\beta)} \bigg)^2\int_0^t(t-\theta)^{2\beta -2}E|\sigma(\theta,X_{\beta,\theta} )|^2d\theta\\
		& \le 6L^2\int_0^t \Big((t-\theta)^{\alpha-1} -(t-\theta)^{\beta-1}\Big)^2 (1+E|X_{\alpha,\theta} |^2)d\theta\\& \ \
+3L^2\int_0^t (t-\theta)^{2\beta-2}E|X_{\alpha,\theta} -X_{\beta,\theta}|^2d\theta\\& \ \ +3L^2\bigg( \frac{1}{\Gamma(\alpha)} -  \frac{1}{\Gamma(\beta)} \bigg)^2\int_0^t(t-\theta)^{2\beta -2}(1+E|X_{\alpha,\theta} |^2)d\theta.
		\end{align*}
		This, together with the estimates obtained in Lemmas \ref{l2} and \ref{lm3} and  Proposition \ref{pro1}, yields
		\begin{align*}
		\int_0^t E|J_1(\theta)|^2d\theta &\le  C t^{2(\alpha\wedge\beta)-1}(|\ln t|^2+1)|\alpha-\beta|^2+C|\alpha-\beta|^2\int_0^t(t-\theta)^{2\beta -2}d\theta\\&
		\leq C t^{2(\alpha\wedge\beta)-1}(|\ln t|^2+1)|\alpha-\beta|^2+C|\alpha-\beta|^2\frac{t^{2\beta-1}}{2\beta-1}\\&	\leq C t^{2(\alpha\wedge\beta)-1}(|\ln t|^2+1)|\alpha-\beta|^2,
		\end{align*}
		where  $C$ is a positive constant depending only on $L, T,\alpha_0$ and $x_0.$
		
For $J_2(\theta)$, we have the decomposition
\begin{align*}
		J_2(\theta) &=\frac{1}{\Gamma(\alpha)} \int_{\theta}^t \bigg((t-s)^{\alpha -1}b'(s,X_{\alpha,s})D_{\theta}X_{\alpha,s} -(t-s)^{\beta -1}b'(s,X_{\beta,s})D_{\theta}X_{\beta,s}\bigg)ds \\
		&+\bigg( \frac{1}{\Gamma(\alpha)} -  \frac{1}{\Gamma(\beta)} \bigg)\int_{\theta}^t (t-s)^{\beta -1}b'(s,X_{\beta,s})D_{\theta}X_{\beta,s}ds\\
		& = \frac{1}{\Gamma(\alpha)} \int_{\theta}^t  (t-s)^{\alpha -1}b'(s,X_{\alpha,s}) (D_{\theta}X_{\alpha,s} - D_{\theta}X_{\beta,s})ds\\
		& + \frac{1}{\Gamma(\alpha)} \int_{\theta}^t (t-s)^{\alpha -1} (b'(s,X_{\alpha,s}) -b'(s,X_{\beta,s}))D_{\theta}X_{\beta,s} ds\\
		& + \frac{1}{\Gamma(\alpha)} \int_{\theta}^t \left((t-s)^{\alpha -1} -(t-s)^{\beta -1}\right)b'(s,X_{\beta,s})D_{\theta}X_{\beta,s} ds\\
		&+\bigg( \frac{1}{\Gamma(\alpha)} -  \frac{1}{\Gamma(\beta)} \bigg)\int_{\theta}^t (t-s)^{\beta -1}b'(s,X_{\beta,s})D_{\theta}X_{\beta,s}ds. 	
		\end{align*}
		By  the H\"{o}lder inequality and Assumption \ref{asum1} we get
		\begin{align*}
		\int_0^t E|J_2(\theta)|^2 d\theta &\le 4 \int_0^t(t-\theta)\left(\int_{\theta}^t (t-s)^{2\alpha -2}E|b'(s,X_{\alpha,s}) (D_{\theta}X_{\alpha,s} - D_{\theta}X_{\beta,s})|^2 ds\right) d\theta\\
		& +4 \int_0^t(t-\theta)\left(\int_{\theta}^t (t-s)^{2\alpha -2}E|(b'(s,X_{\alpha,s}) -b'(s,X_{\beta,s}))D_{\theta}X_{\beta,s}|^2 ds\right) d\theta \\
		& + 4 \int_0^t(t-\theta)\left(\int_{\theta}^t \left((t-s)^{\alpha -1} -(t-s)^{\beta -1}\right)^2E|b'(s,X_{\beta,s})D_{\theta}X_{\beta,s}|^2 ds\right) d\theta\\
		& + 4\bigg( \frac{1}{\Gamma(\alpha)} -  \frac{1}{\Gamma(\beta)} \bigg)^2  \int_0^t(t-\theta)\left(\int_{\theta}^t (t-s)^{2\beta -2}E|b'(s,X_{\beta,s})D_{\theta}X_{\beta,s}|^2ds\right) d\theta\\
		& \le 4L^2T \int_0^t\left(\int_{\theta}^t (t-s)^{2\alpha -2}E|D_{\theta}X_{\alpha,s} - D_{\theta}X_{\beta,s}|^2 ds\right) d\theta\\
		& +4L^2T  \int_0^t\left(\int_{\theta}^t (t-s)^{2\alpha -2}E|(X_{\alpha,s} -X_{\beta,s})D_{\theta}X_{\beta,s}|^2 ds\right) d\theta \\
		& + 4L^2T \int_0^t\left(\int_{\theta}^t \left((t-s)^{\alpha -1} -(t-s)^{\beta -1}\right)^2E|D_{\theta}X_{\beta,s}|^2 ds\right) d\theta\\
		& + 4T\bigg( \frac{1}{\Gamma(\alpha)} -  \frac{1}{\Gamma(\beta)} \bigg)^2  \int_0^t\left(\int_{\theta}^t (t-s)^{2\beta -2}E|D_{\theta}X_{\beta,s}|^2ds\right) d\theta,
		\end{align*}
and hence,
		\begin{align*}
		\int_0^t E|J_2(\theta)|^2 d\theta & \le
		4L^2T\int_0^t (t-s)^{2\alpha -2}\bigg(\int_0^s E|D_{\theta}X_{\alpha,s} - D_{\theta}X_{\beta,s}|^2 d\theta\bigg) ds\\
		& + 4L^2T \int_0^t (t-s)^{2\alpha -2} \bigg( \int_0^s \sqrt{ E |X_{\alpha,s} -X_{\beta,s}|^{4}} \sqrt{E |D_{\theta}X_{\beta,s}|^4}d\theta\bigg)ds\\
		& +   4L^2T\int_0^t\left((t-s)^{\alpha -1} -(t-s)^{\beta -1}\right)^2\left(\int_0^s E|D_{\theta}X_{\beta,s}|^2d\theta \right)ds\\
		&+ 4T\bigg( \frac{1}{\Gamma(\alpha)} -  \frac{1}{\Gamma(\beta)} \bigg)^2  \int_0^t(t-s)^{2\beta -2}\left(\int_{0}^s E|D_{\theta}X_{\beta,s}|^2d\theta\right) ds.
\end{align*}
We now use the estimates (\ref{eq2}) and (\ref{dtheta}) to get
\begin{align*}
		\int_0^t E|J_2(\theta)|^2 d\theta & \le
		C\int_0^t (t-s)^{2\alpha -2}\bigg(\int_0^s E|D_{\theta}X_{\alpha,s} - D_{\theta}X_{\beta,s}|^2 d\theta\bigg) ds\\
		& + C|\alpha-\beta|^2 \int_0^t (t-s)^{2\alpha -2}ds+   C\int_0^t\left((t-s)^{\alpha -1} -(t-s)^{\beta -1}\right)^2ds\\
		&+ C\bigg( \frac{1}{\Gamma(\alpha)} -  \frac{1}{\Gamma(\beta)} \bigg)^2  \int_0^t(t-s)^{2\beta -2}ds.
\end{align*}
where  $C$ is a positive constant depending  on $L, T,x_0$ and $\alpha_0.$ As a consequence, recalling the estimates (\ref{tfg}) and (\ref{hh2}), we obtain
		\begin{align*}
		\int_0^t E|J_2(\theta)|^2 d\theta &\le  C\int_0^t (t-s)^{2\alpha -2}\int_0^s E|D_{\theta}X_{\alpha,s} - D_{\theta}X_{\beta,s}|^2 d\theta ds+ C t^{2(\alpha\wedge\beta)-1}(|\ln t|^2+1)|\alpha-\beta|^2.
		\end{align*}
		Similarly, we also have
		\begin{align*}
		\int_0^t E|I_3(\theta)|^2 d\theta \le   C\int_0^t (t-s)^{2\alpha -2}\int_0^s E|D_{\theta}X_{\alpha,s} - D_{\theta}X_{\beta,s}|^2 d\theta ds+ C t^{2(\alpha\wedge\beta)-1}(|\ln t|^2+1)|\alpha-\beta|^2.
		\end{align*}
Combining the above computations yields
$$	E\|	DX_{\alpha,t}-DX_{\beta,t}\|_{L^2[0,T]}^2\le  C t^{2(\alpha\wedge\beta)-1}(|\ln t|^2+1)|\alpha-\beta|^2+ C\int_0^t (t-s)^{2\alpha -2} E\|	DX_{\alpha,s}-DX_{\beta,s}\|_{L^2[0,T]}^2 ds, $$
		where  $C$ is a positive constant depending  on $L,T,x_0$ and $\alpha_0.$ The latest estimate is similar to (\ref{caik}), and hence, we can get
\begin{align*}
E\|D_{\theta}X_{\alpha,t}-D_{\theta}X_{\beta,t}\|_{L^2[0,T]}^2& \le   C t^{2(\alpha\wedge\beta)-1}(|\ln t|^2+1)|\alpha -\beta|^2
\end{align*}
		for all $t\in [0,T]$.	The proof of the proposition is complete.
	\end{proof}

\begin{proposition}\label{pro6}
	Let Assumptions \ref{asum1} and \ref{asum3} hold. In addition, we assume that
	$$ |\sigma(t,x)|\ge \sigma_0 > 0\mbox{  }\forall\,t\in[0,T], x\in \mathbb{R}. $$
Let $\alpha_0\in (\frac{1}{2},1).$	Then, for every $\alpha\in \left[\alpha_0,1\right]$ and $\gamma\in(1,\frac{1}{2-2\alpha_0}],$ we have
	$$E\|DX_{\alpha,t}\|^{-2\gamma}_{L^2[0,T]}\le C t^{(1-2\alpha)\gamma},\,\,0<t\leq T,$$
where  $C$ is a positive constant depending only  on $L,T,x_0,\sigma_0$ and $\alpha_0.$
\end{proposition}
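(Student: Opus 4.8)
Since $D_\theta X_{\alpha,t}=0$ for $\theta>t$ by Proposition \ref{pro2}, we have $\|DX_{\alpha,t}\|_{L^2[0,T]}^2=\int_0^t|D_\theta X_{\alpha,t}|^2\,d\theta$, and the plan is to exhibit a lower bound for this quantity that holds with very high probability and then integrate. From \eqref{dh} I write $D_\theta X_{\alpha,t}=U_{\theta,t}+R_{\theta,t}$, where the principal part is
$$U_{\theta,t}:=\frac{\sigma(\theta,X_{\alpha,\theta})}{\Gamma(\alpha)}(t-\theta)^{\alpha-1}$$
and $R_{\theta,t}$ collects the two integral terms on the right-hand side of \eqref{dh}. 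By Lemma \ref{l1}$(ii)$, $\Gamma$ is decreasing on $(0,\alpha_\ast)\supset(\tfrac12,1]$, hence $1\le\Gamma(\alpha)\le\Gamma(\tfrac12)=\sqrt\pi$ for $\alpha\in[\alpha_0,1]$; combined with the ellipticity hypothesis $|\sigma|\ge\sigma_0$ and with $\tfrac1{2\alpha-1}\ge1$ this gives, for every $\lambda\in(0,1]$,
$$\int_{(1-\lambda)t}^{t}|U_{\theta,t}|^{2}\,d\theta\ \ge\ \frac{\sigma_0^{2}}{\pi}\int_{(1-\lambda)t}^{t}(t-\theta)^{2\alpha-2}\,d\theta\ \ge\ \frac{\sigma_0^{2}}{\pi}\,(\lambda t)^{2\alpha-1}.$$
The device of keeping the window $[(1-\lambda)t,t]$ as a free parameter $\lambda$ is what will let us upgrade a single moment estimate into a full small-ball estimate.

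Next I would estimate the remainder on the window. Using $|b'|\le L$, $|\sigma'|\le L$ (Assumption \ref{asum3}), the Burkholder--Davis--Gundy inequality and the H\"older trick \eqref{pt3} as in the proof of Proposition \ref{pro3}, together with the pointwise moment bound $E|D_\theta X_{\alpha,s}|^{2q}\le C(s-\theta)^{2q(\alpha-1)}$ — which follows from \eqref{dh} by the Gronwall argument of Proposition \ref{pro3} (or, to reach the full range, by Henry's inequality \cite{henry}), valid for $2q<\tfrac1{1-\alpha}$ — a short computation gives $E|R_{\theta,t}|^{2q}\le C(t-\theta)^{q(4\alpha-3)}$ with $C$ uniform over $\alpha\in[\alpha_0,1]$. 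Minkowski's integral inequality then yields
$$\Bigl\|\int_{(1-\lambda)t}^{t}|R_{\theta,t}|^{2}\,d\theta\Bigr\|_{L^{q}(\Omega)}\ \le\ \int_{(1-\lambda)t}^{t}\bigl(E|R_{\theta,t}|^{2q}\bigr)^{1/q}\,d\theta\ \le\ C\,(\lambda t)^{4\alpha-2},$$
the $\theta$-integral converging precisely because $\alpha>\tfrac12$. The exponent $2(2\alpha-1)$ is exactly twice that of the principal part, so the remainder is of lower order.

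Now fix $q\in(\gamma,\tfrac1{2-2\alpha})$, which is possible because $\gamma\le\tfrac1{2-2\alpha_0}\le\tfrac1{2-2\alpha}$, and set $\mathcal R_\lambda:=\int_{(1-\lambda)t}^{t}|R_{\theta,t}|^{2}\,d\theta$. On the event $\bigl\{\mathcal R_\lambda\le\tfrac{\sigma_0^2}{8\pi}(\lambda t)^{2\alpha-1}\bigr\}$, the elementary bound $\int_{(1-\lambda)t}^{t}(U_{\theta,t}+R_{\theta,t})^2\,d\theta\ge\tfrac12\int_{(1-\lambda)t}^{t}U_{\theta,t}^2\,d\theta-2\mathcal R_\lambda$ gives $\|DX_{\alpha,t}\|_{L^2[0,T]}^2\ge\tfrac{\sigma_0^2}{4\pi}(\lambda t)^{2\alpha-1}$, while by Markov's inequality and the previous display the complementary event has probability at most $C_q(\lambda t)^{q(2\alpha-1)}$. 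Setting $\varepsilon:=\tfrac{\sigma_0^2}{4\pi}(\lambda t)^{2\alpha-1}$ and letting $\lambda$ run over $(0,1]$, the powers of $\lambda$ and $t$ recombine and the $t$-dependence cancels, giving the small-ball estimate
$$P\bigl(\|DX_{\alpha,t}\|_{L^2[0,T]}^{2}<\varepsilon\bigr)\ \le\ \widetilde C_q\,\varepsilon^{q}\qquad\text{for all }0<\varepsilon\le\varepsilon_0(t):=\tfrac{\sigma_0^2}{4\pi}\,t^{2\alpha-1}.$$
Feeding this into $E[A^{-\gamma}]=\gamma\int_0^\infty v^{-\gamma-1}P(A\le v)\,dv$ with $A=\|DX_{\alpha,t}\|_{L^2[0,T]}^{2}$ and splitting the integral at $v=\varepsilon_0(t)$, the tail contributes $\varepsilon_0(t)^{-\gamma}=C\,t^{-(2\alpha-1)\gamma}$ and the bounded part contributes $\gamma\widetilde C_q\int_0^{\varepsilon_0(t)}v^{q-\gamma-1}\,dv=C\,\varepsilon_0(t)^{q-\gamma}$, which is $\le C\,t^{-(2\alpha-1)\gamma}$ since $q>\gamma$ and $t\le T$. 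This is the asserted bound $E\|DX_{\alpha,t}\|_{L^2[0,T]}^{-2\gamma}\le C\,t^{(1-2\alpha)\gamma}$.

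The step I expect to be the main obstacle is the remainder moment estimate of the second paragraph: the exponent $q$ must be taken strictly between $\gamma$ and $\tfrac1{2-2\alpha}$, i.e.\ right at the threshold where the $\theta$-integrals above (equivalently the pointwise moments of $D_\theta X_{\alpha,s}$ as $s\downarrow\theta$) stop being integrable, and all constants must stay uniform as $\alpha\downarrow\alpha_0$; the borderline case $\alpha=\alpha_0$, $\gamma=\tfrac1{2-2\alpha_0}$ is then handled by a routine limiting argument or by invoking Henry's inequality to keep the critical $\theta$-integral finite.
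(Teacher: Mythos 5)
Your proposal is correct and follows essentially the same route as the paper: you isolate the principal diagonal term $\sigma(\theta,X_{\alpha,\theta})(t-\theta)^{\alpha-1}/\Gamma(\alpha)$ of $D_\theta X_{\alpha,t}$ on a shrinking window $[(1-\lambda)t,t]$, control the moments of the remainder there (the paper bounds $E|A|^{q/2}$, $E|B|^{q/2}$ directly by H\"older on the window integrals rather than via pointwise bounds plus Minkowski, but the exponents are the same), convert this by Markov into the small-ball estimate $P(\|DX_{\alpha,t}\|^2_{L^2[0,T]}\le \varepsilon)\le C\varepsilon^{q}$, and integrate the tail formula for negative moments. The only caveat is the endpoint $\gamma=\tfrac{1}{2-2\alpha_0}$ with $\alpha=\alpha_0$, where your admissible range for $q$ degenerates; note that the paper's own choice $2q=\gamma+\tfrac{1}{2-2\alpha_0}$ suffers from the same degeneracy in the convergence condition $\gamma<q/2$, so you are no worse off than the source on this point.
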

\begin{proof}
	By using the fundamental inequality $(a+b+c)^2 \ge \frac{a^2}{2} - 2(b^2+c^2),$ we obtain from (\ref{dh}) that
	\begin{align*}
	|D_{\theta}X_{\alpha,t}|^2 & \ge \frac{  (t-\theta)^{2\alpha -2}\sigma^2({\theta},X_{\alpha,{\theta}})}{2\Gamma^2(\alpha)} \\
	&- 2\bigg(\int_{\theta}^t \frac{(t-s)^{\alpha -1}}{\Gamma(\alpha)}b'(s,X_{\alpha,s})D_{\theta}X_{\alpha,s}ds \bigg)^2\\
	& - 2\bigg( \int_{\theta}^t \frac{(t-s)^{\alpha -1}}{\Gamma(\alpha)} \sigma'(s,X_{\alpha,s})D_{\theta}X_{\alpha,s}dB_s\bigg)^2.
	\end{align*}
Fixed $t\in (0,T].$	For each $y \ge y_0:= \frac{4(2\alpha-1)\Gamma^2(\alpha)}{\sigma_0^2t^{2\alpha-1}}$, the real number $\varepsilon=\frac{1}{t}\left( \frac{4(2\alpha-1)\Gamma^2(\alpha)}{y\sigma_0^2}\right)^{\frac{1}{2\alpha-1}}$ belongs to $(0,1]$. Hence,
	\begin{align*}
	\|DX_{\alpha,t}\|^2_{L^2[0,T]} &= \int_0^t|D_{\theta}X_{\alpha,t}|^2d{\theta}\ge \int_{t(1-\varepsilon)}^t |D_{\theta}X_{\alpha,t}|^2 d{\theta}\\
	& \ge \frac{\sigma^2_0}{2\Gamma^2(\alpha)} \int_{t(1-\varepsilon)}^t (t-{\theta})^{2\alpha -2}d\theta	- 2\int_{t(1-\varepsilon)}^t\bigg(\int_\theta^t\frac{(t-s)^{\alpha -1}}{\Gamma(\alpha)} b'(s,X_{\alpha,s})D_{\theta}X_{\alpha,s}ds \bigg)^2d\theta\\
	&\qquad\qquad\qquad -2 \int_{t(1-\varepsilon)}^t\bigg( \int_\theta^t\frac{(t-s)^{\alpha -1}}{\Gamma(\alpha)} \sigma'(s,X_{\alpha,s})D_{\theta}X_{\alpha,s}dB_s\bigg)^2d\theta\\
	& \ge \frac{\sigma^2_0 (t\varepsilon)^{2\alpha-1}}{2(2\alpha-1)\Gamma^2(\alpha)} - I_y(t)= \frac{2}{y} -I_y(t), 	
	\end{align*}
	where $I_y(t)$ is given by
	\begin{align*}
	I_y(t):= 2\int_{t(1-\varepsilon)}^t\bigg(\int_{\theta}^t\frac{(t-s)^{\alpha -1}}{\Gamma(\alpha)} b'(s,X_{\alpha,s})D_{\theta}X_{\alpha,s}ds \bigg)^2d\theta\\
	+2\int_{t(1-\varepsilon)}^t\bigg( \int_{\theta}^t\frac{(t-s)^{\alpha -1}}{\Gamma(\alpha)} \sigma'(s,X_{\alpha,s})D_{\theta}X_{\alpha,s}dB_s\bigg)^2d\theta.	
	\end{align*}
	Put $$A :=\int_{t(1-\varepsilon)}^t\bigg(\int_{\theta}^t\frac{(t-s)^{\alpha -1}}{\Gamma(\alpha)} b'(s,X_{\alpha,s})D_{\theta}X_{\alpha,s}ds \bigg)^2d\theta $$
	and $$B:= \int_{t(1-\varepsilon)}^t\bigg( \int_{\theta}^t\frac{(t-s)^{\alpha -1}}{\Gamma(\alpha)} \sigma'(s,X_{\alpha,s})D_{\theta}X_{\alpha,s}dB_s\bigg)^2d\theta.	
	$$
Fixed $q\in [2,\frac{1}{1-\alpha_0}].$ By the  H\"{o}lder inequality, we have
	\begin{align*}
	E|A|^{q/2} &\leq L^qE\bigg( \int_{t(1-\varepsilon)}^t \bigg(\int_{\theta}^t (t-s)^{\alpha -1}|D_{\theta}X_{\alpha,s}|ds\bigg)^2d\theta\bigg)^{\frac{q}{2}}\\
&\leq L^q(t\varepsilon)^{\frac{q}{2}-1} \int_{t(1-\varepsilon)}^t E\bigg(\int_{\theta}^t (t-s)^{\alpha -1}|D_{\theta}X_{\alpha,s}|ds\bigg)^qd\theta\\
&\leq L^q(t\varepsilon)^{\frac{q}{2}-1} \int_{t(1-\varepsilon)}^t \bigg(\int_{\theta}^t (t-s)^{\alpha -1}ds\bigg)^{q-1}\int_{\theta}^t (t-s)^{\alpha -1}E|D_{\theta}X_{\alpha,s}|^qdsd\theta\\
&\leq C(t\varepsilon)^{\frac{q}{2}-1}(t\varepsilon)^{\alpha(q-1)} \int_{t(1-\varepsilon)}^t (t-s)^{\alpha -1}\int_{t(1-\varepsilon)}^s E|D_{\theta}X_{\alpha,s}|^qd\theta ds.
	\end{align*}
By using the same arguments as in the proof of (\ref{dtheta}), we have
$$\int_{t(1-\varepsilon)}^s E|D_{\theta}X_{\alpha,s}|^qd\theta\leq C(s-t(1-\varepsilon))^{(\alpha-1)q+1}\leq (t\varepsilon)^{(\alpha-1)q+1},\,\,t(1-\varepsilon)\leq s\leq t.$$
So we obtain
$$E|A|^{q/2}\leq C(t\varepsilon)^{\frac{q}{2}-1}(t\varepsilon)^{\alpha(q-1)} \int_{t(1-\varepsilon)}^t (t-s)^{\alpha -1}(t\varepsilon)^{(\alpha-1)q+1} ds\leq C(t\varepsilon)^{\frac{q(4\alpha-1)}{2}}. $$
Similarly, we also have
	\begin{align*}
	E|B|^{q/2} & \le (t\varepsilon)^{\frac{q}{2}-1} \int_{t(1-\varepsilon)}^t E \bigg(\int_{\theta}^t (t-s)^{\alpha -1}\sigma'(s,X_{\alpha,s})D_{\theta}X_{\alpha,s}dB_s  \bigg)^qd\theta\\
	& \le C(t\varepsilon)^{\frac{q}{2}-1} \int_{t(1-\varepsilon)}^t E\bigg(\int_{\theta}^t (t-s)^{2\alpha -2}|D_{\theta}X_{\alpha,s}|^2ds\bigg)^{\frac{q}{2}}d\theta\\
& \le C(t\varepsilon)^{\frac{q}{2}-1} \int_{t(1-\varepsilon)}^t \bigg(\int_{\theta}^t (t-s)^{2\alpha -2}ds\bigg)^{\frac{q}{2}-1}\int_{\theta}^t (t-s)^{2\alpha -2}E|D_{\theta}X_{\alpha,s}|^qdsd\theta\\	
& \le C(t\varepsilon)^{\frac{q}{2}-1} (t\varepsilon)^{(2\alpha-1)(\frac{q}{2}-1)}\int_{t(1-\varepsilon)}^t (t-s)^{2\alpha -2}\int_{t(1-\varepsilon)}^s E|D_{\theta}X_{\alpha,s}|^qd\theta ds\\
& \le C(t\varepsilon)^{q(2\alpha-1)} .
	\end{align*}
	Consequently,
	\begin{align*}
	E|I_y(t)|^{\frac{q}{2}}  \le  2^{\frac{q}{2}-1}(E|A|^{\frac{q}{2}}+ E|B|^{\frac{q}{2}}) \le  C (t\varepsilon)^{q(2\alpha -1)},
	\end{align*}
where  $C$ is a positive constant depending only  on $L,T,x_0$ and $\alpha_0.$  By the Markov inequality, we get
	\begin{align*}
	P\bigg(\|DX_{\alpha,t}\|^2_{L^2[0,T]} \le \frac{1}{y}\bigg) &\le P\bigg(\frac{2}{y} -I_y(t) \le \frac{1}{y}\bigg) \notag\\
	&= P\bigg(I_y(t) \ge \frac{1}{y}\bigg) \le y^{q/2}E|I_y(t)|^{q/2}\\
& \le Cy^{q/2} (t\varepsilon)^{q(2\alpha -1)} \\
	& =   Cy^{q/2} t^{q(2\alpha -1)}\frac{1}{t^{q(2\alpha-1)}} \left(\frac{2(2\alpha-1)\Gamma^2(\alpha)}{y\sigma_0^2}\right)^q\\
	& \le Cy^{-\frac{q}{2}}\,\,\forall\,y\geq y_0,
	\end{align*}
where  $C$ is a positive constant depending only  on $L,T,x_0,\sigma_0$ and $\alpha_0.$
	
For any $\gamma\in(1,\frac{1}{2-2\alpha_0}],$ we put $2q:=\gamma+\frac{1}{2-2\alpha_0}.$ Then, $q\in [2,\frac{1}{1-\alpha_0}],$ and we obtain
	\begin{align*}
	E \|DX_{\alpha,t}\|^{-2\gamma}_{L^2[0,T]} &= \int_0^\infty \gamma y^{\gamma -1}P(\|DX_{\alpha,t}\|^{-2}_{L^2[0,T]} \ge y)dy\\
	&\le  \int_0^{y_0}\gamma y^{\gamma -1}dy+\int_{y_0}^\infty  \gamma y^{\gamma -1} P\big(\|DX_{\alpha,t}\|^{2}_{L^2[0,T]} \le \frac{1}{y}\big)dy\\
	& \le y_0^{\gamma} + \int_{y_0}^\infty  \gamma y^{\gamma -1} P\big(\|DX_{\alpha,t}\|^{2}_{L^2[0,T]} \le \frac{1}{y}\big)dy \\
	& \le y_0^{\gamma} + C \gamma \int_{y_0}^\infty y^{\gamma -1} y^{-q/2}dy\\
	& \le y_0^{\gamma} + C \gamma y_0^{\gamma-\frac{q}{2}}.
	\end{align*}
	Since $y_0= \frac{4(2\alpha-1)\Gamma^2(\alpha)}{\sigma_0^2t^{2\alpha-1}},$ we conclude that
	$$E \|DX_{\alpha,t}\|^{-2\gamma}_{L^2[0,T]}\le  C t^{(1-2\alpha)\gamma}, $$
where  $C$ is a positive constant depending only  on $L,T,x_0,\sigma_0$ and $\alpha_0.$	 The proof of the proposition is complete.
\end{proof}
	
\noindent {\it Proof of Theorem \ref{dlc}.} We will carry out the proof in two parts.

\noindent {\bf Part 1.} In this part, we prove \eqref{pt2.1}.  Fixed $t\in (0,T],$ an application of Lemma \ref{dltq} to $F_1=X_{\beta,t}$ and $F_2=X_{\alpha,t}$ gives us
\begin{align}
|Eg(X_{\alpha,t})&-Eg(X_{\beta,t})|\leq C\|g\|_\infty \|X_{\alpha,t}-X_{\beta,t}\|_{1,2}\notag\\
&\times\left(E\|DX_{\beta,t}\|^{-8}_{L^2[0,T]}E\left(\int_0^T\int_0^T|D_\theta D_rX_{\beta,t}|^2d\theta dr\right)^2+(E\|DX_{\beta,t}\|^{-2}_{L^2[0,T]})^2\right)^{\frac{1}{4}}.\notag
\end{align}
For any $\beta\in [\frac{7}{8},1],$ by using Proposition  \ref{pro5} with $\alpha_0=\frac{7}{8}$ and $p=2,$ we obtain
$$E\left(\int_0^T\int_0^T|D_\theta D_rX_{\beta,t}|^2d\theta dr\right)^2\leq Ct^{4(2\beta-1)}.$$
On the other hand, by using Proposition  \ref{pro6} with $\alpha_0=\frac{7}{8}$ and $\gamma=4,$ we have
$$E\|DX_{\beta,t}\|^{-8}_{L^2[0,T]}\leq C t^{4(1-2\beta)},$$
$$(E\|DX_{\beta,t}\|^{-2}_{L^2[0,T]})^2\leq C t^{2(1-2\beta)}.$$
Thanks to Proposition  \ref{pro4} we have
\begin{align*}
		\|X_{\alpha,t} - X_{\beta,t}\|_{1,2}&=\left(E|X_{\alpha,t} -X_{\beta,t}|^2+E\|D_{\theta}X_{\alpha,t}-D_{\theta}X_{\beta,t}\|^2_{L^2[0,T]}\right)^{1/2}\\
		&\le \left(Ct^{2(\alpha\wedge\beta)-1}(|\ln t|^2+1)|\alpha-\beta|^2\right)^{1/2}\le Ct^{\alpha\wedge\beta-\frac{1}{2}}(|\ln t|+1)|\alpha-\beta|.
		\end{align*}
Combining the above computations yields
		\begin{align*}
		\big| Eg(X_{\alpha,t}) - Eg(X_{\beta,t})\big|& \le C\|g\|_{\infty}\left(t^{4(1-2\beta)}t^{4(2\beta-1)}+t^{2(1-2\beta)}\right)^{\frac{1}{4}}t^{\alpha\wedge\beta-\frac{1}{2}}(|\ln t|+1)|\alpha -\beta| \\&\le C\|g\|_{\infty}t^{\alpha\wedge\beta-\beta}(|\ln t|+1)|\alpha -\beta|,
		\end{align*}
		which is the required estimate \eqref{pt2.1}.
		
\noindent{\bf	Part 2}. In this part, we prove \eqref{pt2.2}. We note that, by the estimates in Theorem \ref{dl32m} and Proposition \ref{pro4} , we have $\frac{X_{\alpha,t} -X_{\beta,t}}{\alpha-\beta}\to Y_{\beta,t}$ in $L^2(\Omega)$ as $\alpha\to \beta$
	 and $\max\limits_{\alpha\neq \beta}\frac{E\| DX_{\alpha,t}-DX_{\beta,t}\|^2_{L^2[0,T]}}{|\alpha-\beta|^2}<\infty.$
	 Thus, it follows from Lemma $1.2.3$ in \cite{nualartm2} that $Y_{\beta,t} \in \mathbb{D}^{1,2}$ for every $t\in [0,T], \beta\in[\alpha_0,1]$ and
	 $\frac{  DX_{\alpha,t}-DX_{\beta,t}}{\alpha-\beta}$ weakly converges to $DY_{\beta,t}$ in $L^2(\Omega \times [0, T ])$ as $\alpha \to \beta.$ By the  relationship \eqref{0jkd3}, we have
$$\delta\left(\frac{Y_{\beta,t}DX_{\beta,t}}{\|DX_{\beta,t}\|_{L^2[0,T]}}\right)
=Y_{\beta,t}\delta\left(\frac{DX_{\beta,t}}{\|DX_{\beta,t}\|_{L^2[0,T]}}\right)-\frac{\langle DY_{\beta,t},DX_{\beta,t}\rangle_{L^2[0,T]}}{\|DX_{\beta,t}\|_{L^2[0,T]}}.$$
Let $g$ be a bounded and continuous function. By the relation \eqref{oldl1} we have
	\begin{multline}\label{pt4m }	
Eg(X_{\alpha,t})-Eg(X_{\beta,t})\\
= E\left[\int_{X_{\beta,t}}^{X_{\alpha,t}} g(z)dz\delta\left(\frac{DX_{\beta,t} }{\|DX_{\beta,t}\|^2_{L^2[0,T]}}\right)\right]-E\left[\frac{g(X_{\alpha,t}) \langle DX_{\alpha,t}-DX_{\beta,t} , DX_{\beta,t} \rangle_{L^2[0,T]}}{\|DX_{\beta,t}\|_{L^2[0,T]}^2}\right].
	\end{multline}
Then, for $\alpha\not= \beta,$ we obtain
		\begin{align}\label{pt3.1.1} &\frac{Eg(X_{\alpha,t})-Eg(X_{\beta,t})}{\alpha-\beta}-E\left[g(X_{\beta,t})\delta\left(\frac{Y_{\beta,t}DX_{\beta,t}}{\|DX_{\beta,t}\|_{L^2[0,T]}}\right)\right]\notag\\&
	=\frac{1}{\alpha-\beta}E\left[\int_{X_{\beta,t}}^{X_{\alpha,t}} g(z)dz\delta\left(\frac{DX_{\beta,t} }{\|DX_{\beta,t}\|^2_{L^2[0,T]}}\right)\right] -\frac{1}{\alpha-\beta}E\left[\frac{g(X_{\alpha,t}) \langle DX_{\alpha,t}-DX_{\beta,t} , DX_{\beta,t} \rangle_{L^2[0,T]}}{\|DX_{\beta,t}\|_{L^2[0,T]}^2}\right]\notag\\&-E\left[g(X_{\beta,t})Y_{\beta,t}\delta\left(\frac{DX_{\beta,t}}{\|DX_{\beta,t}\|_{L^2[0,T]}}\right)\right]+E\left[\frac{g(X_{\beta,t}) \langle DY_{\beta,t} , DX_{\beta,t} \rangle_{L^2[0,T]}}{\|DX_{\beta,t} \|_{L^2[0,T]}^2}\right]\notag\\&
	=E\left[\left(\frac{1}{\alpha-\beta}\int_{X_{\beta,t}}^{X_{\alpha,t}} g(z)dz-g(X_{\beta,t})Y_{\beta,t}\right)\delta\left(\frac{DX_{\alpha,t} }{\|DX_{\beta,t}\|^2_{L^2[0,T]}}\right)\right]\notag\\&
	+E\left[\frac{(g(X_{\alpha,t})-g(X_{\beta,t}) ) \langle DX_{\alpha,t}-DX_{\beta,t} , DX_{\alpha,t} \rangle_{L^2[0,T]}}{(\alpha-\beta)\|DX_{\beta,t}\|_{L^2[0,T]}^2}\right]\notag\\&
	-E\left[g(X_{\beta,t})\left\langle \frac{  DX_{\alpha,t}-DX_{\beta,t}}{\alpha-\beta}-DY_{\beta,t},\frac{DX_{\beta,t}  }{\|DX_{\beta,t}\|_{L^2[0,T]}^2}\right\rangle_{L^2[0,T]}\right].
	\end{align}
We now observe that
		\begin{align*}
\frac{1}{\alpha-\beta}\int_{X_{\beta,t}}^{X_{\alpha,t}} g(z)dz-g(X_{\beta,t})Y_{\beta,t}&	=\frac{X_{\alpha,t}-X_{\beta,t}}{\alpha-\beta}\int_{0}^{1} g(X_{\beta,t}-z(X_{\alpha,t}-X_{\beta,t}))dz-g(X_{\beta,t})Y_{\beta,t}\\
&=\left(\frac{X_{\alpha,t}-X_{\beta,t}}{\alpha-\beta}-Y_{\beta,t}\right)\int_{0}^{1} g(X_{\beta,t}-z(X_{\alpha,t}-X_{\beta,t}))dz\\
&+Y_{\beta,t}\int_{0}^{1} (g(X_{\beta,t}-z(X_{\alpha,t}-X_{\beta,t}))-g(X_{\beta,t}))dz.
	\end{align*}
This implies
		\begin{align*}
E&\left|\left(\frac{1}{\alpha-\beta}\int_{X_{\beta,t}}^{X_{\alpha,t}} g(z)dz-g(X_{\beta,t})Y_{\beta,t}\right)\delta\left(\frac{DX_{\alpha,t} }{\|DX_{\beta,t}\|^2_{L^2[0,T]}}\right)\right|\\
&
	\leq\|g\|_\infty E\left|\left(\frac{X_{\alpha,t}-X_{\beta,t}}{\alpha-\beta}\right)\delta\left(\frac{DX_{\alpha,t} }{\|DX_{\beta,t}\|^2_{L^2[0,T]}}\right)\right|\\&+E\left|\delta\left(\frac{DX_{\alpha,t} }{\|DX_{\beta,t}\|^2_{L^2[0,T]}}\right)Y_{\beta,t}\int_{0}^{1} (g(X_{\beta,t}-z(X_{\alpha,t}-X_{\beta,t}))-g(X_{\beta,t}))dz\right|.
	\end{align*}
On the other hand,  by   the H\"{o}lder inequality and  Theorem \ref{dl32m} we get
	\begin{align*}E&\left|\left(\frac{X_{\alpha,t}-X_{\beta,t}}{\alpha-\beta}-Y_{\beta,t}\right)\delta\left(\frac{DX_{\alpha,t} }{\|DX_{\beta,t}\|^2_{L^2[0,T]}}\right)\right|\\&\leq \left(E\left|\frac{X_{\alpha,t}-X_{\beta,t}}{\alpha-\beta}-Y_{\beta,t}\right|^2\right)^{1/2}\left(E\left|\delta\left(\frac{DX_{\alpha,t} }{\|DX_{\beta,t}\|^2_{L^2[0,T]}}\right)\right|^2\right)^{1/2}\to 0 \ \mbox{as} \ \alpha\to \beta.\end{align*}
	Furthermore, noting that $g(X_{\beta,t}-z(X_{\alpha,t}-X_{\beta,t}))-g(X_{\beta,t})\to 0 \ \mbox{a.s. \ as} \ \alpha\to \beta.$ Using  the dominated convergence theorem we can get
	$$E\left|\delta\left(\frac{DX_{\alpha,t} }{\|DX_{\beta,t}\|^2_{L^2[0,T]}}\right)Y_{\beta,t}\int_{0}^{1} (g(X_{\beta,t}-z(X_{\alpha,t}-X_{\beta,t}))-g(X_{\beta,t}))dz\right|\to 0 \ \mbox{as} \ \alpha\to \beta.$$
	 So it holds that
	 \begin{equation}\label{pt3.1}E\left|\left(\frac{1}{\alpha-\beta}\int_{X_{\beta,t}}^{X_{\alpha,t}} g(z)dz-g(X_{\beta,t})Y_{\beta,t}\right)\delta\left(\frac{DX_{\alpha,t} }{\|DX_{\beta,t}\|^2_{L^2[0,T]}}\right)\right|\to 0 \ \mbox{as} \ \alpha\to \beta.
	 \end{equation}
	 Next, using  the H\"{o}lder inequality and Propositions \ref{pro4} and \ref{pro6}, we have
	 \begin{align*}E&\left[\frac{(g(X_{\alpha,t})-g(X_{\beta,t}) ) \langle DX_{\alpha,t}-DX_{\beta,t} , DX_{\alpha,t} \rangle_{L^2[0,T]}}{(\alpha-\beta)\|DX_{\beta,t}\|_{L^2[0,T]}^2}\right]\\
&\leq E\left[\frac{|g(X_{\alpha,t})-g(X_{\beta,t}) | \| DX_{\alpha,t}-DX_{\beta,t}\|_{L^2[0,T]}}{|\alpha-\beta|\|DX_{\beta,t}\|_{L^2[0,T]}}\right]\\&
	 \leq (E|g(X_{\alpha,t})-g(X_{\beta,t})|^4)^{1/4}\left(\frac{E\| DX_{\alpha,t}-DX_{\beta,t}\|^2_{L^2[0,T]}}{|\alpha-\beta|^2}\right)^{1/2}(E\|DX_{\beta,t}\|_{L^2[0,T]}^{-4})^{1/4}\\
&\leq C(E|g(X_{\alpha,t})-g(X_{\beta,t})|^4)^{1/4}.
	 \end{align*}
	 Once again, by the dominated convergence theorem, we have
	 \begin{multline}\label{pt3.2}E\left[\frac{(g(X_{\alpha,t})-g(X_{\beta,t}) ) \langle DX_{\alpha,t}-DX_{\beta,t} , DX_{\alpha,t} \rangle_{L^2[0,T]}}{(\alpha-\beta)\|DX_{\beta,t}\|_{L^2[0,T]}^2}\right]\\
\leq C(E|g(X_{\alpha,t})-g(X_{\beta,t})|^4)^{1/4}\to 0 \ \mbox{as} \ \alpha\to \beta.
	 \end{multline}
 It is easy to verify that the function $r\mapsto \frac{g(X_{\beta,t} )D_rX_{\beta,t}  }{\|DX_{\beta,t}\|_{L^2[0,T]}^2}$ is continuous. Hence, since  $\frac{  DX_{\alpha,t}-DX_{\beta,t}}{\alpha-\beta}$ weakly converges to $DY_{\beta,t}$ in $L^2(\Omega \times [0, T ]),$ we deduce
	  \begin{equation}\label{pt3.3}
	  E\left[g(X_{\beta,t})\left\langle \frac{  DX_{\alpha,t}-DX_{\beta,t}}{\alpha-\beta}-DY_{\beta,t},\frac{DX_{\beta,t}  }{\|DX_{\beta,t}\|_{L^2[0,T]}^2}\right\rangle_{L^2[0,T]}\right]\to 0 \ \mbox{as} \ \alpha\to \beta.
	  \end{equation}
	  Combining \eqref{pt3.1.1}-\eqref{pt3.3}, one can obtain
	  $$\frac{Eg(X_{\alpha,t}) -Eg(X_{\beta,t})}{\alpha-\beta}\to E\left[g(X_{\beta,t})\delta\left(\frac{Y_{\beta,t}DX_{\beta,t}}{\|DX_{\beta,t}\|_{L^2[0,T]}}\right)\right] \ \mbox{as} \ \alpha\to \beta.$$
		This completes the proof of Theorem \ref{dlc}.

\hfill$\square$



\end{document}